\newcommand{\bm}[1]{\boldsymbol{#1}}
\newcommand{\bmr}[1]{\bm{\mr{#1}}}
\newcommand{\lj}{[ \hspace{-2pt} [}
\newcommand{\rj}{] \hspace{-2pt} ]}
\newcommand{\mb}[1]{\mathbb{#1}}
\newcommand{\mc}[1]{\mathcal{#1}}
\newcommand{\mr}[1]{\mathrm{#1}}
\newcommand{\jump}[1]{\lj #1 \rj}
\newcommand{\aver}[1]{ \{#1\}  }
\newcommand{\wt}[1]{ \widetilde{ #1}}
\newcommand{\tr}[1]{\ifmmode \mathrm{tr}\left( #1 \right) \else 
\text{tr} \left( #1 \right) \fi }
\newcommand{\vect}[2]{ \begin{bmatrix} #1 \\ #2 \\ \end{bmatrix}}
\newcommand{\vech}[3]{ \begin{bmatrix} #1 \\ #2 \\ #3 \\ \end{bmatrix}}
\newcommand\curl{\ifmmode \mathrm{curl} \else \text{curl}\fi}
\newcommand\MTh{\mc{T}_h}
\newcommand\MEh{\mc{E}_h}
\newcommand\un{\bm{\mr n}}
\renewcommand{\d}[1]{\mathrm d \boldsymbol{#1}}
\newcommand\pnorm[1]{\| #1 \|_{\bm{\mathrm{p}}}}
\newcommand\unorm[1]{\| #1 \|_{\bm{\mathrm{u}}}}
\newcommand\bu{\bm{\nu}}
\newcommand\comment[1]{}
\newtheorem{theorem}{Theorem}
\newtheorem{lemma}{Lemma}
\newtheorem{remark}{Remark}
\title[Sequential Least Squares Method]{A Sequential Least Squares
Method for Elliptic Equations in Non-Divergence Form}
\author[R. Li]{Ruo Li} \address{CAPT, LMAM and School of Mathematical
  Sciences, Peking University, Beijing 100871, P.R. China}
\email{rli@math.pku.edu.cn}
\author[F.-Y. Yang]{Fanyi Yang} \address{School of Mathematical
  Sciences, Peking University, Beijing 100871, P.R. China}
\email{yangfanyi@pku.edu.cn}
\begin{document}
\maketitle

\begin{abstract}
  We develop a new least squares method for solving the second-order
  elliptic equations in non-divergence form. Two least-squares-type
  functionals are proposed for solving the equation in two sequential
  steps. We first obtain a numerical approximation to the gradient in
  a piecewise irrotational polynomial space. Then together with the
  numerical gradient, we seek a numerical solution of the primitive
  variable in the continuous Lagrange finite element space. The
  variational setting naturally provides an {\it a posteriori} error
  which can be used in an adaptive refinement algorithm. The error
  estimates under the $L^2$ norm and the energy norm for both two
  unknowns are derived. By a series of numerical experiments, we
  verify the convergence rates and show the efficiency of the adaptive
  algorithm.

  \noindent \textbf{keywords}: non-divergence form, least squares
  method, piecewise irrotational space, discontinuous Galerkin
  method. 
\end{abstract}


\section{Introduction}
\label{sec:intro}
This work is concerned with the non-divergence form second-order
elliptic equation, which is often encountered in many applications
from areas such as probability and stochastic processes
\cite{Smears2013discontinuous}. In addition, such problems also
naturally arise as the linearization to fully nonlinear PDEs, as
obtained by applying the Newton's iterative method, see
\cite{Caffarelli1997properties, Feng2009mixed}. Due to the
non-divergence structure, it is invalid to derive a variational
formulation by applying the integration by parts. Instead, the
existence and uniqueness of the solutions to this problem are sought
in the strong sense, we refer to \cite{Feng2017finite,
Smears2013discontinuous, Smears2014discontinuous, Babuska1994special,
Gallistl2017variational} and the references therein for the
well-posedness of the solutions to the non-divergence form
second-order elliptic equation. 

Recently several finite element methods have been proposed, though
such a problem does not naturally fit within the standard Galerkin
framework. Conforming finite element methods require $H^2$-regularity
for approximating the strong solution, which naturally leads to a
$C^1$ finite element space \cite{Bohmer2008finite,
Bialecki1998convergence}. But the $C^1$ finite elements are sometimes
considered impractical. In \cite{Lakkis2011finite}, the authors
introduced a mixed finite element method with $C^0$ finite element
space via a finite element Hessian obtained in the same approximation
space. In \cite{Feng2017finite}, the authors proposed and analyzed a
finite element method with $C^0$ space by introducing an interior
penalty term. But the coefficient matrix is assumed to be continuous.
Gallistl introduced a conforming mixed finite element method based on
a least squares functional, we refer to \cite{Gallistl2017variational}
for more details. In \cite{Neilan2019discrete}, the authors proposed a
simple and convergent finite element method with $C^0$ finite element
space.  Based on discontinuous approximations, Smears and S\"uli
proposed a discontinuous Galerkin method where the optimal convergence
rate in $h$ with respect to broken $H^2$ norm is proven and the
authors have extended this method to the Hamilton-Jacobi-Bellman
equations \cite{Smears2014discontinuous, Gallistl2019mixed}.  Besides,
Wang et al proposed a weak Galerkin method and we refer to
\cite{Wang2018primal} for details. 

In this paper, we propose a new least squares finite element method
for solving the non-divergence elliptic problem. We rewrite the
equation into an equivalent first-order system as a fundamental
requirement in modern least squares method \cite{Bochev1998review}.
We employ two different approximation spaces to solve the gradient and
the primitive variable sequentially, which is motivated from the idea
in \cite{li2019sequential}.  We first define a least squares
functional to seek a numerical approximation to the gradient in a
piecewise irrotational polynomial space. Then we obtain the
approximation to the primitive variable with the numerical gradient by
solving another least squares problem in the standard $C^0$ finite
element space. Our method avoids solving a saddle-point problem of
mixed formulation, and in contrast to \cite{Smears2013discontinuous,
Neilan2019discrete, Gallistl2019mixed} our method only involves the
first-order operator in each step. We prove the convergence rates for
both variables in $L^2$ norm and energy norm. The least squares
functional naturally serves as an {\it a posteriori} error estimate
and we introduce an adaptive algorithm for solving the problem of low
regularity. By carrying out a series of numerical experiments, we
verify the convergence orders in the error estimates and illustrate
the efficiency of the adaptive algorithm. 

The rest of this paper is organized as follows. Section
\ref{sec:preliminaries} gives the notations that will be used
throughout the paper and defines the considered problem. In Section
\ref{sec:dgspace}, we introduce the piecewise irrotational
approximation space and give some basic properties of this space. In
Section \ref{sec:dlsm}, we propose the least squares method for both
two variables respectively and the error estimates are derived. In
Section \ref{sec:numericalresults}, a series of numerical experiments
are presented for testing the accuracy of the proposed scheme.


\section{Preliminaries} 
\label{sec:preliminaries}
Let $\Omega \subset \mb{R}^d(d = 2, 3)$ be a bounded convex domain
with the boundary $\partial \Omega$. We denote by $\MTh$ a regular and
shape-regular subdivision of $\Omega$ into simplexes. Let $\MEh^i$ be
the set of all interior faces associated with the subdivision $\MTh$,
$\MEh^b$ the set of all faces lying on $\partial \Omega$ and then
$\MEh = \MEh^i \cup \MEh^b$. We define 
\begin{displaymath}
  h_K = \text{diam}(K), \quad \forall K \in \MTh, \quad h_e =
  \text{diam}(e), \quad \forall e \in \MEh,
\end{displaymath}
and we set $h = h_{\max} = \max_{K \in \MTh} h_K$.

We then introduce the trace operators commonly used in the DG
framework.  Let $K^+$ and $K^-$ be two adjacent elements sharing an
interior face $e = \partial K^+ \cap \partial K^- \subset \MEh^i$ with
the unit outward normal vectors $\un^+$ and $\un^-$, respectively. Let
$v$ and $\bm{q}$ be scalar-valued functions and vector-valued
functions that may be discontinuous across $\MEh^i$.  For $v^+ :=
v|_{e \subset \partial K^+}$, $v^- := v|_{e \subset \partial K^-}$,
$\bm{q}^+ := \bm{q}|_{e \subset \partial K^+}$, $\bm{q}^- :=
\bm{q}|_{e \subset \partial K^-}$, we set the average operator $\aver{
\cdot}$ as 
\begin{displaymath}
  \aver{v} := \frac{1}{2} \left( v^+ + v^- \right), \quad \aver{
  \bm{q}} := \frac{1}{2} \left( \bm{q}^+ + \bm{q}^- \right),
\end{displaymath}
and we set the jump operator $\jump{ \cdot}$ as 
\begin{displaymath}
  \begin{aligned}
    \jump{v} & := v^+\un^+ + v^- \un^-, \quad \jump{\bm{q} \cdot \un }
    := \bm{q}^+ \cdot \un^+ + \bm{q}^- \cdot \un^-, \\ 
    \jump{\bm{q} \times \un} &:= \bm{q}^+ \times \un^+ + \bm{q}^-
    \times \un^-, \quad   \jump{\bm{q} \otimes \un} := \bm{q}^+
    \otimes \un^+ + \bm{q}^- \otimes \un^-, \\
  \end{aligned}
\end{displaymath}
where $\cdot \otimes \cdot$ denotes the tensor product between two
vectors.  For $e \in \MEh^b$, these definitions shall be modified as
follows:
\begin{displaymath}
  \begin{aligned}
    \aver{v} &:= v, \quad \aver{\bm{q}} := \bm{q}, \quad \jump{v} :=
    v\un, \\
    \jump{\bm{q} \cdot \un }  &:= \bm{q} \cdot \un, \quad \jump{\bm{q}
    \times \un} := \bm{q} \times \un, \quad \jump{\bm{q} \otimes \un}
    := \bm{q} \otimes \un. \\
  \end{aligned}
\end{displaymath}

Throughout this paper, let us note that $C$ and $C$ with a subscript
are generic constants that may be different from line to line but are
independent of $h$. We will also use the standard notations and
definitions for the spaces $L^r(D)$, $L^r(D)^d$, $L^r(D)^{d \times d}$,
$H^r(D)$, $H^r(D)^d$, $H^r(D)^{d \times d}$ with $D$ a bounded domain
and $r$ a positive integer (may be $\infty$), and their associated
inner products and norms. We define the Sobolev space of irrotational
vector fields by
\begin{displaymath}
  \bmr{I}^r(D) := \left\{ \bm{v} \in H^r(D)^d \ |\ \nabla \times
  \bm{v} = 0 \quad \text{in } \Omega \right\}.
\end{displaymath}
Further, for the partition $\MTh$ we will follow the standard
definitions for the broken Sobolev spaces $L^2(\MTh)$, $L^2(\MTh)^d$,
$L^2(\MTh)^{d \times d}$, $H^r(\MTh)$, $H^r(\MTh)^d$, $H^r(\MTh)^{d
\times d}$ and their corresponding broken norms
\cite{arnold2002unified}. 

The problem dealt with in this paper is to find numerical
approximation to the strong solution for the elliptic problem in
non-divergence form, which reads
\begin{equation}
  \begin{aligned}
    \mc{L} u := A: D^2u  = f & \quad \text{in } \Omega, \\
    u = g & \quad \text{on } \partial \Omega, \\
  \end{aligned}
  \label{eq:nondiv}
\end{equation}
where $\cdot : \cdot$ denotes the Frobenious inner product between two
matrices. The coefficient matrix $A(\bm x) = \left\{ a_{ij}( \bm{x})
\right\} \in L^\infty(\Omega)^{d \times d}$ is assumed to be uniformly
elliptic, i.e. there exist two positive constants $\underline{\nu}$
and $\overline{\nu}$ satisfying 
\begin{displaymath}
  \underline{\nu} | \bm{\xi}|^2 \leq \bm{\xi}^T A(\bm{x}) \bm{\xi}
  \leq \overline{\nu} | \bm{\xi} |^2, \quad \forall \bm{\xi} \in
  \mb{R}^d, \quad \text{a.e. in } \Omega.
\end{displaymath}
We furthermore assume that the coefficient satisfies the Cordes
condition: there exists a positive constant $\varepsilon \leq 1$ such
that 
\begin{equation}
  \frac{|A|^2}{(\tr{A})^2} \leq \frac{1}{d - 1 + \varepsilon}, \quad
  \text{a.e. in } \Omega,
  \label{eq:cordes}
\end{equation}
where $|A| := \sqrt{ A:A}$ denotes the Frobenious norm. The uniform
ellipticity of the coefficient cannot ensure the well-posedness of the
problem \eqref{eq:nondiv}, at least in three dimensions. If the
condition \eqref{eq:cordes} holds, there exists a unique strong
solution $u \in H^2(\Omega)$ to \eqref{eq:nondiv} with the proper
source term $f$ and the boundary condition $g$, we refer to
\cite{Smears2013discontinuous, Feng2017finite,
Smears2014discontinuous} for more regularity results of the problem
\eqref{eq:nondiv}.  Particularly, the uniformly elliptic coefficient
$A$ directly implies the Cordes condition \eqref{eq:cordes} for the
planar case \cite{Smears2014discontinuous}. 

In this paper, we introduce the gradient variable $\bm{p} = \nabla u$
and the scalar elliptic problem \eqref{eq:nondiv} will be rewritten
into the first-order system:
\begin{equation}
  \begin{aligned}
    A: \nabla \bm{p} &= f \quad \text{in } \Omega, \\
    \bm{p} - \nabla u &= 0 \quad \text{in } \Omega, \\
    u &= g \quad \text{on } \partial \Omega. \\
  \end{aligned}
  \label{eq:upnondiv}
\end{equation}
To transform the problem into first-order system is one of the
fundamental ideas in modern least squares finite element method
\cite{Bochev1998review} and our proposed least squares method is based
on the formulation \eqref{eq:upnondiv}.


\section{The finite element space}
\label{sec:dgspace}
In this section, we introduce the locally curl-free finite element
space $\bmr{S}_h^m$ with an integer $m \geq 0$, which is defined as
\begin{displaymath}
  \bmr{S}_h^m := \left\{ \bm{v} \in L^2(\Omega)^d\ |\ \bm{v}|_K \in
  \mb{P}_m(K)^d, \quad \nabla \times(\bm{v}|_K) = 0, \quad \forall K
  \in \MTh \right\}.
\end{displaymath}
We first give some basic properties of $\bmr{S}_h^m$ which are very
essential in the convergence analysis. We set $ \bmr{S}^m(D) :=
\mb{P}_m(D)^d \cap \bmr{I}^0(D)$ as the space of irrotational
polynomials of degree at most $m$ on the domain $D$. Obviously, we can
compactly write the space $\bmr{S}_h^m$ as $\bmr{S}_h^m = \Pi_{K \in
\MTh} \bmr{S}^m(K)$.
\begin{lemma}
  For any $\bm{q} \in \bmr{I}^{m + 1}(K) $ and an element $K \in
  \MTh$, there exists a polynomial $\wt{\bm{q}} \in \bmr{S}^m(K)$ such
  that
  \begin{equation}
    \begin{aligned} 
      \| \bm{q} - \wt{\bm{q}} \|_{H^k(K)} & \leq C h_K^{m + 1 - k} \|
      \bm{q} \|_{H^{m+1}(K)}, \quad 0 \leq k \leq m + 1. \\
    \end{aligned}
    \label{eq:interpolationerror}
  \end{equation}
  \label{le:interpolationerror}
\end{lemma}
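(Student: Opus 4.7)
The plan is to exploit the fact that an irrotational vector field on a simply connected domain is a gradient of a scalar potential, and then to reduce the claim to a standard polynomial approximation result for scalar functions.

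First, since the simplex $K$ is simply connected and $\bm{q} \in \bmr{I}^{m+1}(K)$ satisfies $\nabla \times \bm{q} = 0$ on $K$, there exists a scalar potential $\phi \in H^{m+2}(K)$ (unique up to an additive constant) such that $\bm{q} = \nabla \phi$. The regularity $\bm{q} \in H^{m+1}(K)^d$ is precisely what is needed to place $\phi$ in $H^{m+2}(K)$, and we may normalize $\phi$ by any fixed functional (for example, fixing $\int_K \phi = 0$) so that $|\phi|_{H^j(K)} = |\bm{q}|_{H^{j-1}(K)}$ for $j \geq 1$.

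Next, I would invoke the Bramble--Hilbert lemma applied to $\phi$: there exists $\wt{\phi} \in \mb{P}_{m+1}(K)$ (e.g.\ an averaged Taylor polynomial of degree $m+1$, or any polynomial approximation that is exact on $\mb{P}_{m+1}$) satisfying
\begin{displaymath}
    | \phi - \wt{\phi} |_{H^j(K)} \leq C h_K^{m+2-j} |\phi|_{H^{m+2}(K)}, \quad 0 \leq j \leq m+2.
\end{displaymath}
Here the constant $C$ depends only on the shape-regularity of $K$, which is uniform over $\MTh$. Defining $\wt{\bm{q}} := \nabla \wt{\phi}$, we immediately have $\wt{\bm{q}} \in \mb{P}_m(K)^d$ and $\nabla \times \wt{\bm{q}} = 0$, hence $\wt{\bm{q}} \in \bmr{S}^m(K)$.

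Finally, I would translate the estimate on $\phi - \wt{\phi}$ into the desired bound on $\bm{q} - \wt{\bm{q}} = \nabla(\phi - \wt{\phi})$. Summing seminorms, for $0 \leq k \leq m+1$,
\begin{displaymath}
    \| \bm{q} - \wt{\bm{q}} \|_{H^k(K)}^2 = \sum_{j=0}^{k} |\nabla(\phi - \wt{\phi})|_{H^j(K)}^2 \leq \sum_{j=0}^{k} |\phi - \wt{\phi}|_{H^{j+1}(K)}^2 \leq C h_K^{2(m+1-k)} |\phi|_{H^{m+2}(K)}^2,
\end{displaymath}
and since $|\phi|_{H^{m+2}(K)} = |\bm{q}|_{H^{m+1}(K)} \leq \|\bm{q}\|_{H^{m+1}(K)}$, we obtain \eqref{eq:interpolationerror}. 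The only mildly delicate point is the justification of the scalar potential $\phi \in H^{m+2}(K)$ with the correct regularity: on a simplex this follows from standard Poincar\'e-lemma arguments for Lipschitz star-shaped domains, so no real obstacle arises. Everything else is routine application of Bramble--Hilbert.
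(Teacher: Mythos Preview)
Your proposal is correct and follows essentially the same route as the paper: both arguments lift $\bm{q}$ to a scalar potential $\phi\in H^{m+2}(K)$ with $\bm{q}=\nabla\phi$, approximate $\phi$ by a polynomial of degree $m+1$, and take the gradient to obtain $\wt{\bm{q}}\in\bmr{S}^m(K)$. The only cosmetic difference is that the paper invokes the nodal interpolant of $\phi$ (which is legitimate here since $H^{m+2}(K)\hookrightarrow C^0(K)$ for $d\le 3$), whereas you use a Bramble--Hilbert/averaged Taylor polynomial; either choice yields the same estimate.
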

\begin{proof}
  Based on the fact that $\bmr{I}^{m + 1}(K) = \nabla H^{m+2}(K)$
  \cite{girault1986finite}, we have that there exists a function $v
  \in H^{m+2}(K) $ satisfying $\bm{q} = \nabla v$. We denote by
  $\wt{v} \in \mb{P}_{m+1}(K)$ the standard nodal interpolation
  polynomial of $v$. The estimate \eqref{eq:interpolationerror}
  is implied by the approximation property of $\wt{v}$ with
  $\wt{\bm{q}} = \nabla \wt{v} \in \bmr{S}^m(K)$, which completes the
  proof.
\end{proof}
For any $\bm{q} \in \bmr{I}^{m+1}(\Omega)$ and any element $K \in
\MTh$, we define a local $L^2$-projection $\pi_{K}^{\bmr{S}, m}$ such
that $\pi_{K}^{\bmr{S}, m} \bm{q} \in \bmr{S}^m(K)$ satisfies
\begin{equation}
  \|\bm{q} - \pi_{K}^{\bmr{S}, m} \bm{q} \|_{L^2(K)} = \min_{\bm{r}
  \in \bmr{S}^m(K)} \|\bm{q} - \bm{r} \|_{L^2(K)}. 
  \label{eq:localL2pro}
\end{equation}
Then the we can obtain the following local approximation property of
$\pi_K^{\bmr{S}, m}$ from Lemma \ref{le:interpolationerror}.
\begin{lemma}
  For any element $K \in \MTh$, the following estimates hold:
  \begin{equation}
    \begin{aligned}
      \|\bm{q} - \pi_{K}^{\bmr{S}, m} \bm{q} \|_{H^k(K)} & \leq C
      h_K^{m+1 - k} \|\bm{q} \|_{H^{m+1}(K)}, \quad 0 \leq k \leq m
      + 1, \\
      \| \partial^k ( \bm{q} -  \pi_{K}^{\bmr{S}, m} \bm{q}  )
      \|_{L^2(\partial K)} & \leq C h_K^{m + 1/2 - k} \| \bm{q}
      \|_{H^{m+1}(K)}, \quad  0 \leq k \leq m, \\
    \end{aligned}
    \label{eq:L2interpolation}
  \end{equation}
  for any $\bm{q} \in \bmr{I}^{m+1}(\Omega)$.
  \label{le:L2interpolation}
\end{lemma}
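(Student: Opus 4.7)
The plan is to derive both inequalities from Lemma \ref{le:interpolationerror} by combining the extremal property of the $L^2$-projection with a standard inverse estimate and a scaled trace inequality on each simplex $K$. The case $k=0$ of the first bound is immediate: since $\pi_K^{\bmr{S},m}\bm{q}$ minimizes $\|\bm{q}-\bm{r}\|_{L^2(K)}$ over $\bm{r}\in\bmr{S}^m(K)$, I would simply compare against the polynomial $\wt{\bm{q}}$ furnished by Lemma \ref{le:interpolationerror}, yielding
\begin{displaymath}
  \|\bm{q}-\pi_K^{\bmr{S},m}\bm{q}\|_{L^2(K)}\le \|\bm{q}-\wt{\bm{q}}\|_{L^2(K)}\le C\,h_K^{m+1}\|\bm{q}\|_{H^{m+1}(K)}.
\end{displaymath}

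For $1\le k\le m+1$, I would split
\begin{displaymath}
  \bm{q}-\pi_K^{\bmr{S},m}\bm{q}=(\bm{q}-\wt{\bm{q}})+(\wt{\bm{q}}-\pi_K^{\bmr{S},m}\bm{q}).
\end{displaymath}
The first summand is controlled directly by Lemma \ref{le:interpolationerror} in the $H^k(K)$ norm. The second summand lies in the polynomial space $\bmr{S}^m(K)$, so an inverse inequality on a shape-regular simplex gives $\|\wt{\bm{q}}-\pi_K^{\bmr{S},m}\bm{q}\|_{H^k(K)}\le C h_K^{-k}\|\wt{\bm{q}}-\pi_K^{\bmr{S},m}\bm{q}\|_{L^2(K)}$, and a triangle inequality combined with the two $L^2$ bounds already at hand supplies the needed $h_K^{m+1-k}$ factor.

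For the second estimate, my plan is to apply the scaled trace inequality
\begin{displaymath}
  \|w\|_{L^2(\partial K)}^2\le C\bigl(h_K^{-1}\|w\|_{L^2(K)}^2+h_K\|w\|_{H^1(K)}^2\bigr)
\end{displaymath}
component-wise to $w=\partial^\alpha(\bm{q}-\pi_K^{\bmr{S},m}\bm{q})$ for each multi-index $\alpha$ of order $k$ with $0\le k\le m$. Substituting the interior bounds of the first part at differentiation orders $k$ and $k+1$ (the latter is admissible precisely because $k+1\le m+1$) produces $\|w\|_{L^2(\partial K)}^2\le C(h_K^{-1}h_K^{2(m+1-k)}+h_K\,h_K^{2(m-k)})\|\bm{q}\|_{H^{m+1}(K)}^2$, which simplifies to the claimed $h_K^{m+1/2-k}$ order.

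I do not anticipate any serious obstacle: the statement is essentially a repackaging of Lemma \ref{le:interpolationerror} through a local inverse estimate and a scaled trace inequality, both of which hold with $h_K$-independent constants on shape-regular simplices. The only point requiring slight care is that the inverse estimate must be applied on the locally irrotational polynomial space, but this is a subspace of $\mb{P}_m(K)^d$ and inherits the inverse estimate automatically.
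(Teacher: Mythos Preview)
Your proposal is correct and follows essentially the same route as the paper: the paper also splits $\bm{q}-\pi_K^{\bmr{S},m}\bm{q}$ via $\wt{\bm{q}}$ (writing the polynomial part as $\pi_K^{\bmr{S},m}(\wt{\bm{q}}-\bm{q})$, which equals your $\wt{\bm{q}}-\pi_K^{\bmr{S},m}\bm{q}$ since $\wt{\bm{q}}\in\bmr{S}^m(K)$), applies the inverse inequality to that polynomial part, and then invokes the trace inequality for the boundary estimate. Your write-up is in fact a bit more explicit than the paper's, which dismisses the trace bound as ``trivial''; your remark that the inverse estimate is inherited by the subspace $\bmr{S}^m(K)\subset\mb{P}_m(K)^d$ is also a point the paper leaves implicit.
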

\begin{proof}
  Obviously from \eqref{eq:localL2pro} one has that 
  \begin{displaymath}
    \pi_{K}^{\bmr{S}, m} \bm{r} = \bm{r}, \quad \forall \bm{r} \in
    \bmr{S}^m(K).
  \end{displaymath}
  Applying the inverse inequality directly leads to 
  \begin{displaymath}
    \begin{aligned}
      \|\bm{q} - \pi_{K}^{\bmr{S}, m} \bm{q} \|_{H^k(K)} &\leq \|\bm{q}
      - \wt{\bm{q}} \|_{H^k(K)}  + \| \pi_{K}^{\bmr{S}, m}(
      \wt{\bm{q}} -  \bm{q}) \|_{H^k(K)} \\
      & \leq  \|\bm{q} - \wt{\bm{q}} \|_{H^k(K)}  + Ch_K^{-k}  \|
      \pi_{K}^{\bmr{S}, m}( \wt{\bm{q}} -  \bm{q}) \|_{L^2(K)} \\
      & \leq  \|\bm{q} - \wt{\bm{q}} \|_{H^k(K)}  + Ch_K^{-k}  \|
      \wt{\bm{q}} -  \bm{q} \|_{L^2(K)} + Ch_K^{-k} \| \bm{q} -
      \pi_{K}^{\bmr{S}, m} \bm{q} \|_{L^2(K)} \\ 
      & \leq Ch_K^{m+1 - k} \|\bm{q} \|_{H^{m+1}(K)}, \\
    \end{aligned}
  \end{displaymath}
  where $\wt{\bm{q}}$ is defined in Lemma \ref{le:interpolationerror}.
  Similarly, by the trace inequality it is trivial to obtain the trace
  estimate in \eqref{eq:L2interpolation}, which completes the proof.
\end{proof}
Furthermore, we define a global $L^2$-projection $\Pi_h^{\bmr{S}, m}$
in a piecewise manner: for any $\bm{q} \in \bmr{I}^{m+1}(\Omega)$, $
\Pi_h^{\bmr{S}, m} \bm{q} \in \bmr{S}_h^m$ is denoted by
\begin{displaymath}
  (\Pi_h^{\bmr{S}, m} \bm{q})|_K = \pi_K^{\bmr{S}, m} \bm{q}, \quad
  \forall K \in \MTh.
\end{displaymath}
Clearly, the global $L^2$-projection has the following approximation
property:
\begin{lemma}
  For any element $K \in \MTh$, the following estimates hold:
  \begin{equation}
    \begin{aligned}
      \|\bm{q} - \Pi_{h}^{\bmr{S}, m} \bm{q} \|_{H^k(K)} & \leq C
      h_K^{m+1 - k} \|\bm{q} \|_{H^{m+1}(K)}, \quad 0 \leq k \leq m
      + 1, \\
      \| \partial^k ( \bm{q} -  \Pi_{h}^{\bmr{S}, m} \bm{q}  )
      \|_{L^2(\partial K)} & \leq C h_K^{m + 1/2 - k} \| \bm{q}
      \|_{H^{m+1}(K)}, \quad  0 \leq k \leq m, \\
    \end{aligned}
    \label{eq:globalL2interpolation}
  \end{equation}
  for any $\bm{q} \in \bmr{I}^{m+1}(\Omega)$.
  \label{le:globalL2interpolation}
\end{lemma}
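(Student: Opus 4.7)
The plan is almost trivial: the global projection $\Pi_h^{\bmr{S},m}$ has been defined piecewise by the identity $(\Pi_h^{\bmr{S},m}\bm{q})|_K = \pi_K^{\bmr{S},m}\bm{q}$ for every $K \in \MTh$, so on a single element $K$ the global projection coincides exactly with the local projection already studied in Lemma \ref{le:L2interpolation}. Consequently, the two inequalities in \eqref{eq:globalL2interpolation} are, elementwise, the same inequalities as in \eqref{eq:L2interpolation}, and no new argument is needed.

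Concretely, I would write: fix $K \in \MTh$ and $\bm{q} \in \bmr{I}^{m+1}(\Omega)$. Since $\bm{q}|_K \in \bmr{I}^{m+1}(K)$, Lemma \ref{le:L2interpolation} applied with this restriction gives both the interior $H^k(K)$ bound and the boundary $L^2(\partial K)$ bound for $\pi_K^{\bmr{S},m}\bm{q}$. Substituting the defining identity of $\Pi_h^{\bmr{S},m}$ immediately converts these into the claimed bounds for $\bm{q} - \Pi_h^{\bmr{S},m}\bm{q}$ on $K$.

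There is no genuine obstacle here; the only thing to note is that the local projection $\pi_K^{\bmr{S},m}$ is well defined for any input in $\bmr{I}^{m+1}(\Omega)$ (since only $\bm{q}|_K$ enters its definition through \eqref{eq:localL2pro}), so the piecewise assembly causes no compatibility issues across interfaces. The statement is essentially a bookkeeping corollary of Lemma \ref{le:L2interpolation} and could even be absorbed into that lemma as a remark.
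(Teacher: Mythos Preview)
Your proposal is correct and matches the paper's approach exactly: the paper's own proof is the single sentence ``It is a direct extension of Lemma~\ref{le:L2interpolation},'' which is precisely the piecewise identification you describe.
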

\begin{proof}
  It is a direct extension of Lemma \ref{le:L2interpolation}.
\end{proof}

We define $\bm{V}_h^m$ and $V_h^m$ as the piecewise polynomial spaces, 
\begin{displaymath}
  V_h^m := \left\{ v_h \in L^2(\Omega) \  | \  v_h|_K \in \mb{P}_m(K),
  \quad \forall K \in \MTh \right\}, \quad \bm{V}_h^m := (V_h^m)^d.
\end{displaymath}
For the analysis of convergence, we will require the following
estimates.

\begin{lemma}
  The following estimates holds, 
  \begin{equation}
    \| \nabla \bm{p}_h \|_{L^2(\Omega)} \leq \|\nabla \cdot \bm{p}_h
    \|_{L^2(\Omega)} + \|\nabla \times \bm{p}_h \|_{L^2(\Omega)} + C
    \left( \sum_{e \in \MEh^b} \frac{1}{h_e} \| \bm{p}_h \times \un
    \|_{L^2(e)}^2 \right)^{1/2},
    \label{eq:dMaxwell}
  \end{equation}
  for any $\bm{p}_h \in \bm{V}_h^m \cap H^1(\Omega)^d$.
  \label{le:dMaxwell}
\end{lemma}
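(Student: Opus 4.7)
The plan is to derive a pointwise algebraic identity on each element, reduce the elementwise contributions to a boundary integral by exploiting the $H^1$-continuity of $\bm{p}_h$, and then bound that boundary integral by the weighted tangential-trace norm, closing the estimate with Young's inequality.

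The starting point will be the vector identity
\[
|\nabla\bm{p}|^2 - (\nabla\cdot\bm{p})^2 - |\nabla\times\bm{p}|^2 \;=\; \nabla\cdot\bm{F}, \qquad \bm{F} := (\bm{p}\cdot\nabla)\bm{p} - \bm{p}\,(\nabla\cdot\bm{p}),
\]
which follows by expanding both sides and commuting mixed partials. Since $\bm{p}_h|_K$ is polynomial, this holds pointwise on every $K\in\MTh$; integrating on $K$, applying the divergence theorem, and summing yields
\[
\|\nabla\bm{p}_h\|_{L^2(\Omega)}^2 \;=\; \|\nabla\cdot\bm{p}_h\|_{L^2(\Omega)}^2 + \|\nabla\times\bm{p}_h\|_{L^2(\Omega)}^2 + \sum_{K\in\MTh}\int_{\partial K}\bm{F}\cdot\un.
\]
On any interior face $e$, the jump $\bm{g} := \bm{p}_h^+ - \bm{p}_h^-$ vanishes by continuity, forcing $\nabla\bm{g}|_e = (\partial_n\bm{g})\otimes\un$. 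A one-line check then gives $(\bm{p}_h\cdot\nabla)\bm{g} = (\bm{p}_h\cdot\un)\,\partial_n\bm{g}$ and $\nabla\cdot\bm{g} = \un\cdot\partial_n\bm{g}$, so that $(\bm{F}^+ - \bm{F}^-)\cdot\un \equiv 0$ on $e$ and only the integral over $\partial\Omega$ survives.

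On each flat face $f$ of $\partial\Omega$ I would decompose $\bm{p}_h = \phi\,\un + \bm{p}_{h,T}$ with $\phi := \bm{p}_h\cdot\un$ and tangential part $\bm{p}_{h,T}$ of pointwise size $|\bm{p}_h\times\un|$. Using that $\un$ is constant on $f$, expanding $\bm{F}\cdot\un$ reduces (in 2D) to $\phi\,\partial_\tau\xi - \xi\,\partial_\tau\phi$, where $\xi := \bm{p}_h\times\un$, with the obvious surface-divergence/curl analogue in 3D. Tangential integration by parts on $f$ gives $\int_f(\phi\,\partial_\tau\xi - \xi\,\partial_\tau\phi) = [\phi\xi]_{\partial f} - 2\int_f \xi\,\partial_\tau\phi$. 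The surface integral term I would bound by a weighted Cauchy--Schwarz together with the discrete trace inequality $\|\nabla\bm{p}_h\|_{L^2(f)}^2 \le C\,h_f^{-1}\|\nabla\bm{p}_h\|_{L^2(K_f)}^2$, producing a bound of the form $C\,(\sum_{e\in\MEh^b} h_e^{-1}\|\bm{p}_h\times\un\|_{L^2(e)}^2)^{1/2}\,\|\nabla\bm{p}_h\|_{L^2(\Omega)}$; Young's inequality then absorbs the $\|\nabla\bm{p}_h\|$ factor into the left-hand side.

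The main obstacle is the corner contribution $\sum_f[\phi\xi]_{\partial f}$, which does not telescope because $\phi$ and $\xi$ depend on the face-specific $\un$ while $\bm{p}_h$ itself is continuous. The resolution is geometric: at a corner $v$ of the polygonal boundary where faces $f_1,f_2$ meet with distinct outward normals $\un_1,\un_2$, linear independence of $\un_1,\un_2$ (non-degeneracy of the corner, a property of the fixed domain $\Omega$) implies the two tangential traces $\bm{p}_h(v)\times\un_j$ invertibly determine $\bm{p}_h(v)$, yielding $|\bm{p}_h(v)|^2 \le C\bigl(|\bm{p}_h(v)\times\un_1|^2 + |\bm{p}_h(v)\times\un_2|^2\bigr)$ with $C$ depending only on the minimum corner angle. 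A one-dimensional inverse inequality on an adjacent boundary edge $e$ then bounds each $|\bm{p}_h(v)\times\un_j|^2$ by $C\,h_e^{-1}\|\bm{p}_h\times\un\|_{L^2(e)}^2$, so the finitely many corner contributions are absorbed into the required sum. The three-dimensional case is handled analogously with the 1D edges of $\partial\Omega$ playing the role of the corners.
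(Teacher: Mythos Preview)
Your argument is correct and takes a genuinely different route from the paper's proof. The paper does \emph{not} use the pointwise identity $|\nabla\bm{p}|^2-(\nabla\cdot\bm{p})^2-|\nabla\times\bm{p}|^2=\nabla\cdot\bm{F}$. Instead, it builds an auxiliary Lagrange function $\bm{q}_h\in\bm{V}_h^m\cap H^1(\Omega)^d$ by modifying the nodal values of $\bm{p}_h$ at the boundary Lagrange points so that $\bm{q}_h\times\un=0$ on $\partial\Omega$: at nodes interior to a single flat boundary face the tangential part is killed while the normal part is kept, and at nodes on polyhedral edges/vertices the value is set to zero. A scaling argument then gives $\|\nabla(\bm{p}_h-\bm{q}_h)\|_{L^2(\Omega)}^2\le C\sum_{e\in\MEh^b}h_e^{-1}\|\bm{p}_h\times\un\|_{L^2(e)}^2$; the classical Maxwell inequality on the convex domain is applied to $\bm{q}_h$, and a triangle inequality finishes. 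Interestingly, the paper uses the very same corner/edge geometric fact you invoke (two non-parallel normals control the full vector by its two tangential traces), but it enters when bounding the nodal differences $|\bm{\alpha}_\nu-\bm{\beta}_\nu|$ rather than when bounding an integration-by-parts remainder.

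What each approach buys: yours is self-contained, essentially re-deriving the Gaffney/Maxwell identity, and the cancellation of interior-face contributions of $\bm{F}\cdot\un$ is a clean use of the $H^1$-continuity. The paper's approach is more modular, importing the Maxwell inequality as a black box and reducing everything to a nodal perturbation estimate; it also delivers the constant $1$ in front of $\|\nabla\cdot\bm{p}_h\|$ and $\|\nabla\times\bm{p}_h\|$ directly, which matters downstream in the coercivity proof of Theorem~\ref{th:bounded}. In your outline, the final Young step yields $(1-\epsilon)\|\nabla\bm{p}_h\|^2\le\|\nabla\cdot\bm{p}_h\|^2+\|\nabla\times\bm{p}_h\|^2+CB$, hence a constant $(1-\epsilon)^{-1/2}>1$ after taking roots; to match the stated inequality exactly you should instead solve the quadratic inequality $X^2-C\sqrt{B}\,X-CB\le R^2$ in $X=\|\nabla\bm{p}_h\|$, which gives $X\le R+C'\sqrt{B}$ with the sharp leading constant.
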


\begin{proof}
  Actually $\bm{V}_h^m \cap H^1(\Omega)$ is the vector-valued Lagrange
  finite element space of degree $m$. We let $\mc{N} = \left\{ \nu_0,
  \nu_1, \ldots, \nu_n\right\}$ denote the Lagrange points 
  corresponding to the triangular (tetrahedral) partition $\MTh$, and
  we let $\left\{ \phi_{\bm{\nu}_0}, \phi_{\bm{\nu}_1}, \ldots,
  \phi_{\bm{\nu}_n} \right\}$ denote the corresponding Lagrange basis
  functions, which satisfy $\phi_{\bm{\nu}_i}(\bm{\nu}_j) =
  \delta_{ij}$. Hence, there exists a group of coefficients $\left\{
  \alpha_{\bu}^j \right\}(1 \leq j \leq d, \bu \in \mc{N})$ which
  allows us to write $\bm{p}_h = (p_h^1, \ldots, p_h^d)^T$ as 
  \begin{displaymath}
    \bm{p}_h = \begin{bmatrix}
      p_h^1 \\ \ldots \\ p_h^d 
    \end{bmatrix} = \begin{bmatrix}
      \sum_{\bu \in \mc{N}} \alpha_{\bu}^1 \phi_{\bu} \\ \ldots \\
      \sum_{\bu \in \mc{N}} \alpha_{\bu}^d \phi_{\bu} 
    \end{bmatrix} = \sum_{\bu \in \mc{N}} \bm{\alpha}_{\bu}
    \phi_{\bu},
  \end{displaymath}
  where $\bm{\alpha}_{\bu} = (\alpha_{\bu}^1, \ldots,
  \alpha_{\bu}^d)^T$. Then we divide the points in $\mc{N}$ into three
  categories, 
  \begin{equation}
    \begin{aligned}
      \mc{N}_i &:= \left\{ \bm{\nu} \in \mc{N} \ | \  \bm{\nu} \text{
      is interior to the domain } \Omega \right\}, \\
      \mc{N}_v &:= \hspace{-3.6pt} \begin{aligned}
        & \left\{ \bm{\nu} \in \mc{N} \ | \  \bm{\nu} \text{ is a
        vertex of the polygonal boundary} \partial \Omega \right\},
        \quad d = 2, \\
        & \left\{ \bm{\nu} \in \mc{N} \ | \  \bm{\nu} \text{ lies on
        an edge of the polyhedral boundary } \partial \Omega
        \right\}, \quad d = 3, \\
      \end{aligned} \\
      \mc{N}_b &:= \mc{N} \backslash ( \mc{N}_i \cup \mc{N}_v). \\
    \end{aligned}
    \label{eq:Ndef}
  \end{equation}
  From the definition \eqref{eq:Ndef}, we note that for any point $\bu
  \in \mc{N}_b$, there exists a face $e_{\bu} \in \MEh^b$ such that
  $\bu \in e$, and for any point $\bu \in \mc{N}_v$, there exist two
  nonparallel faces $e_{\bu}^1, e_{\bu}^2 \in \MEh^b$ such that $\bu
  \in e_{\bu}^1 \cap e_{\bu}^2$.

  Then we construct a new group of  coefficients $ \{
  \beta_{\bm{\nu}}^j \} (1 \leq j \leq d, \nu \in \mc{N})$ such that 
  \begin{equation}
    \beta_{\bm{\nu}}^j = \begin{cases}
      \alpha_{\bm{\nu}}^j, & \bm{\nu} \in \mc{N}_i, \\
      \wt{\beta}_{\bm{\nu}}^j, & \bm{\nu} \in \mc{N}_b, \\
      0, & \bm{\nu} \in \mc{N}_v, \\
    \end{cases}
    \quad 1 \leq j \leq d.
    \label{eq:newbeta}
  \end{equation}
  For any point $\bu \in \mc{N}$, we also let $\bm{\beta}_{\bu} =
  (\beta_{\bu}^1, \ldots, \beta_{\bu}^d)^T$. For any point $\bu \in
  \mc{N}_b$, we determine $\wt{\beta}_{\bu}^j$ by the following
  equations, 
  \begin{equation}
    {\bm{\beta}}_{\bm{\nu}} \times \un = \bm{0}, \quad
    {\bm{\beta}}_{\bm{\nu}} \cdot \un = \bm{\alpha}_{\bm{\nu}} \cdot
    \un,
    \label{eq:tildebeta}
  \end{equation}
  where $\un$ denotes the unit outward normal corresponding to
  $e_{\bu}$. Then we define a new polynomial ${\bm{q}}_h$ as 
  \begin{displaymath}
    \bm{q}_h = \begin{bmatrix}
      q_h^1 \\ \ldots \\ q_h^d 
    \end{bmatrix} = \begin{bmatrix}
      \sum_{\bu \in \mc{N}} \beta_{\bu}^1 \phi_{\bu} \\ \ldots \\
      \sum_{\bu \in \mc{N}} \beta_{\bu}^d \phi_{\bu} 
    \end{bmatrix} = \sum_{\bu \in \mc{N}} \bm{\beta}_{\bu}
    \phi_{\bu}.
  \end{displaymath}
  Then we will estimate the error $ \|\nabla(\bm{p}_h - \bm{q}_h
  )\|_{L^2(\Omega)}^2$. From \eqref{eq:newbeta}, we have that 
  \begin{displaymath}
     \|\nabla(\bm{p}_h - {\bm{q}}_h )\|_{L^2(\Omega)}^2 \leq
     C \sum_{\nu \in \mc{N}_b \cup \mc{N}_v} \| \nabla \phi_{\bm{\nu}}
     \|_{L^2(\Omega)}^2 | \bm{\alpha}_{\bm{\nu}} -
     \bm{\beta}_{\bm{\nu}} |^2.
  \end{displaymath}
  For any point $\bu \in \mc{N}_b$, the scaling argument
  \cite{Karakashian2007convergence} gives that $\|\nabla
  \phi_{\bm{\nu}} \|_{L^2(\Omega)}^2 \leq C h_{e_{\bu}}^{d -
  2}$. Then we deduce that 
  \begin{displaymath}
    \begin{aligned}
      \sum_{\bm{\nu} \in \mc{N}_b} \| \nabla & \phi_{\bm{\nu}}
      \|_{L^2(\Omega)}^2   | \bm{\alpha}_{\bm{\nu}} -
      \bm{\beta}_{\bm{\nu}} |^2 \leq C  \sum_{\bm{\nu} \in \mc{N}_b}
      h_{e_{\bu}}^{d - 2} \left( | ( \bm{\alpha}_{\bm{\nu}} -
      \bm{\beta}_{\bm{\nu}}) \times \un |^2 + | (
      \bm{\alpha}_{\bm{\nu}} - \bm{\beta}_{\bm{\nu}}) \cdot \un |^2
      \right) \\
      & = C  \sum_{\bm{\nu} \in \mc{N}_b} h_{e_{\bu}}^{d - 2}  |
      \bm{\alpha}_{\bm{\nu}} \times \un |^2 = C  \sum_{\bm{\nu} \in
      \mc{N}_b} h_{e_{\bu}}^{d - 2}  | \bm{p}_h(\bm{\nu}) \times \un
      |^2  \\
      & \leq C  \sum_{\bm{\nu} \in \mc{N}_b} h_{e_{\bu}}^{d - 2} \|
      \bm{p}_h \times \un \|_{L^\infty(e_{\bu})}^2 \leq C
      \sum_{\bm{\nu} \in \mc{N}_b} h_{e_{\bu}}^{-1 } \| \bm{p}_h
      \times \un \|_{L^2(e_{\bu})}^2 \leq  C \sum_{e \in \MEh^b}
      h_e^{-1 } \| \bm{p}_h \times \un \|_{L^2(e)}^2,
    \end{aligned}
  \end{displaymath}
  where $\un$ denotes the unit outward normal corresponding to
  $e_{\bu}$. For any point $\bm{\nu} \in \mc{N}_v$, there exist two
  faces $e_{\bu}^1, e_{\bu}^2 \in \MEh^b$ such that $\bu \in e_{\bu}^1
  \cap e_{\bu}^2$, and we denote $\un_1, \un_2$ as their corresponding
  unit outward normal. Since $\un_1$ and $\un_2$ are not parallel, we
  have that there exists a constant $C$ that only depends on $\Omega$
  such that 
  \begin{displaymath}
    |\bm{v}|^2 \leq C \left( |\bm{v} \times \un_1|^2 + |\bm{v} \times
    \un_2|^2 \right), 
  \end{displaymath}
  for any $\bm{v} \in \mathbb{R}^d$. By the inverse inequality, we
  derive that 
  \begin{displaymath}
    \begin{aligned}
      \sum_{\bm{\nu} \in \mc{N}_v} \| \nabla \phi_{\bm{\nu}}
      & \|_{L^2(\Omega)}^2  | \bm{\alpha}_{\bm{\nu}} -
      \bm{\beta}_{\bm{\nu}} |^2 =  \sum_{\bm{\nu} \in \mc{N}_v} \|
      \nabla \phi_{\bm{\nu}} \|_{L^2(\Omega)}^2 |
      \bm{\alpha}_{\bm{\nu}}|^2 \leq C  \sum_{\bm{\nu} \in \mc{N}_v}
      \| \nabla \phi_{\bm{\nu}} \|_{L^2(\Omega)}^2   \left(
      |\bm{\alpha}_{\bm{\nu}} \times \un_1|^2 +
      |\bm{\alpha}_{\bm{\nu}} \times \un_2|^2 \right) \\ 
      & \leq C \sum_{\bm{\nu} \in \mc{N}_v} \left( h_{e_{\bu}^1}^{d -
      2} |\bm{\alpha}_{\bm{\nu}} \times \un_1|^2 + h_{e_{\bu}^2}^{d -
      2} |\bm{\alpha}_{\bm{\nu}} \times \un_2|^2 \right) \\
      & \leq C \sum_{\bm{\nu} \in \mc{N}_v} \left( h_{e_{\bu}^1}^{d -
      2} | \bm{p}_h(\bm{\nu}) \times \un_1|^2 + h_{e_{\bu}^2}^{d - 2}
      |\bm{p}_h(\bm{\nu}) \times \un_2|^2 \right) \\
      & \leq C \sum_{\bm{\nu} \in \mc{N}_v} \left( h_{e_{\bu}^1}^{d -
      2} \| \bm{p}_h \times \un_1 \|_{L^\infty(e_{\bu}^1)}^2 +
      h_{e_{\bu}^2}^{d - 2} \|\bm{p}_h \times \un_2
      \|_{L^\infty(e_{\bu}^2)}^2 \right) \\
      & \leq C \sum_{\bm{\nu} \in \mc{N}_v} \left( h_{e_{\bu}^1}^{ -1}
      \| \bm{p}_h \times \un_1 \|_{L^2 (e_{\bu}^1)}^2 +
      h_{e_{\bu}^2}^{-1} \|\bm{p}_h \times \un_2 \|_{L^2(e_{\bu}^2)}^2
      \right) \leq  C \sum_{e \in \MEh^b} h_e^{-1 } \| \bm{p}_h \times
      \un \|_{L^2(e)}^2.   \\
    \end{aligned}
  \end{displaymath}
  Collecting all estimates above, we arrive at the estimate 
  \begin{equation}
    \| \nabla (\bm{p}_h - \bm{q}_h) \|_{L^2(\Omega)}^2 \leq C
    \sum_{e \in \MEh^b} h_e^{-1} \| \bm{p}_h \times \un \|_{L^2(e)}^2.
    \label{eq:qtqdiff}
  \end{equation}
  Further, it is trivial to check the tangential trace $\bm{q}_h
  \times \un$ vanishes on $\partial \Omega$. Applying the Maxwell
  inequality \cite{girault1986finite}, we get that 
  \begin{equation}
    \|\nabla {\bm{q}}_h \|_{L^2(\Omega)} \leq \|\nabla \cdot
    {\bm{q}}_h \|_{L^2(\Omega)} +  \|\nabla \times {\bm{q}}_h
    \|_{L^2(\Omega)}. 
    \label{eq:maxwellestimate}
  \end{equation}
  By \eqref{eq:qtqdiff}, \eqref{eq:maxwellestimate} and triangle
  inequality, we obtain that 
  \begin{displaymath}
    \begin{aligned}
      \|\nabla \bm{p}_h \|_{L^2(\Omega)}  &\leq \|\nabla {\bm{q}}_h
      \|_{L^2(\Omega)} + \| \nabla (\bm{p}_h - {\bm{q}}_h)
      \|_{L^2(\Omega)} \\ 
      & \leq  \|\nabla \cdot {\bm{q}}_h \|_{L^2(\Omega)} + \|\nabla
      \times {\bm{q}}_h \|_{L^2(\Omega)}  + \| \nabla (\bm{p}_h -
      {\bm{q}}_h) \|_{L^2(\Omega)}  \\
      & \leq \|\nabla \cdot \bm{p}_h \|_{L^2(\Omega)}  + \|\nabla
      \times \bm{p}_h \|_{L^2(\Omega)}  \\ 
      & + \| \nabla \cdot (\bm{p}_h -
      {\bm{q}}_h) \|_{L^2(\Omega)}  + \| \nabla \times (\bm{p}_h -
      {\bm{q}}_h) \|_{L^2(\Omega)}  + \| \nabla (\bm{p}_h -
      {\bm{q}}_h) \|_{L^2(\Omega)} \\
      & \leq \|\nabla \cdot \bm{p}_h \|_{L^2(\Omega)} +  \|\nabla
      \times \bm{p}_h \|_{L^2(\Omega)} + C  \| \nabla
      (\bm{p}_h - {\bm{q}}_h) \|_{L^2(\Omega)} \\
      & \leq \|\nabla \cdot \bm{p}_h \|_{L^2(\Omega)} +  \|\nabla
      \times \bm{p}_h \|_{L^2(\Omega)} +C  \left( \sum_{e \in \MEh^b}
      h_e^{-1} \| \bm{p}_h \times \un \|_{L^2(e)}^2 \right)^{1/2},
    \end{aligned}
  \end{displaymath}
  which gives us the estimate \eqref{eq:dMaxwell} and completes the
  proof.
\end{proof}

\begin{lemma}
  The following estimate holds,
  \begin{equation}
    \| \nabla \bm{p}_h \|_{L^2(\MTh)} \leq \| \nabla \cdot \bm{p}_h
    \|_{L^2(\MTh)} + C \left( \sum_{e \in \MEh^i} \frac{1}{h_e} \|
    \jump{\bm{p}_h \otimes \un} \|^2_{L^2(e)} + \sum_{e \in \MEh^b}
    \frac{1}{h_e} \|  \bm{p}_h \times \un \|_{L^2(e)}^2 \right)^{1/2},
    \label{eq:discreteMT}
  \end{equation}
  for any $\bm{p}_h \in \bmr{S}_h^m$.
  \label{le:discreteMT}
\end{lemma}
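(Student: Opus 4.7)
The statement is a discrete Maxwell-type inequality for the broken $H^1$ seminorm of a piecewise irrotational (but possibly discontinuous) polynomial. The natural plan is to reduce to the conforming case already handled in Lemma \ref{le:dMaxwell} by constructing a continuous approximation $\bm{q}_h \in \bm{V}_h^m \cap H^1(\Omega)^d$ of $\bm{p}_h$ through a nodal-averaging (Oswald-type) operator, and then controlling the broken $H^1$ error between $\bm{p}_h$ and $\bm{q}_h$ by the interior jumps.

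\textbf{Main steps.} First, I would define $\bm{q}_h$ componentwise on the vector Lagrange basis: at each Lagrange node $\bu$ shared by several elements, set the value of $\bm{q}_h$ to the arithmetic mean of the values $\bm{p}_h|_K(\bu)$ over the elements $K$ containing $\bu$ (keeping the original value at nodes interior to a single element). This construction is standard in the DG literature, and it yields the Oswald/Karakashian–Pascal estimate
\begin{equation*}
  \sum_{K \in \MTh} \| \nabla (\bm{p}_h - \bm{q}_h) \|_{L^2(K)}^2
  \;\leq\; C \sum_{e \in \MEh^i} \frac{1}{h_e} \| \jump{\bm{p}_h \otimes \un} \|_{L^2(e)}^2,
\end{equation*}
where $|\jump{\bm{p}_h \otimes \un}|$ coincides with the full vector jump $|\bm{p}_h^+ - \bm{p}_h^-|$ on each interior face. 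Second, I would exploit the local curl-free property: since $\nabla \times \bm{p}_h \equiv 0$ on every $K$, the piecewise curl of $\bm{q}_h$ equals the piecewise curl of $\bm{q}_h - \bm{p}_h$, and similarly $\nabla \cdot \bm{q}_h$ differs from $\nabla \cdot \bm{p}_h$ only by $\nabla \cdot (\bm{q}_h - \bm{p}_h)$. Consequently,
\begin{equation*}
  \| \nabla \times \bm{q}_h \|_{L^2(\Omega)}
  + \| \nabla \cdot \bm{q}_h \|_{L^2(\Omega)}
  \;\leq\; \| \nabla \cdot \bm{p}_h \|_{L^2(\MTh)}
  + C \| \nabla(\bm{p}_h - \bm{q}_h) \|_{L^2(\MTh)}.
\end{equation*}
Third, on boundary faces I would use the triangle and discrete trace inequalities to estimate
\begin{equation*}
  \sum_{e \in \MEh^b} \frac{1}{h_e} \| \bm{q}_h \times \un \|_{L^2(e)}^2
  \;\leq\; 2 \sum_{e \in \MEh^b} \frac{1}{h_e} \| \bm{p}_h \times \un \|_{L^2(e)}^2
  + C \sum_{e \in \MEh^i} \frac{1}{h_e} \| \jump{\bm{p}_h \otimes \un} \|_{L^2(e)}^2,
\end{equation*}
the last sum appearing because $(\bm{p}_h - \bm{q}_h)|_{\partial \Omega}$ is bounded by the neighbouring interior jumps via the construction of the averaging operator.

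\textbf{Conclusion and obstacle.} With these three ingredients in hand, I would apply Lemma \ref{le:dMaxwell} to $\bm{q}_h$ and finish by the triangle inequality
\begin{equation*}
  \| \nabla \bm{p}_h \|_{L^2(\MTh)} \;\leq\; \| \nabla \bm{q}_h \|_{L^2(\Omega)} + \| \nabla(\bm{p}_h - \bm{q}_h) \|_{L^2(\MTh)},
\end{equation*}
collecting all of the above bounds so that the curl-free property makes the $\|\nabla \times \bm{p}_h\|$ term disappear (as required). The main obstacle is really the first step: carefully setting up the averaging operator $\bm{q}_h$ so that the difference $\bm{p}_h - \bm{q}_h$ is controlled solely by the vector jumps $\jump{\bm{p}_h \otimes \un}$ across interior faces, with the correct scaling in $h_e$. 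This must be done componentwise on each Lagrange basis function and closely parallels the construction in the proof of Lemma \ref{le:dMaxwell}, except that now the boundary behaviour of $\bm{q}_h$ is inherited from $\bm{p}_h$ rather than being prescribed, so a bit of care is needed to keep the boundary tangential term on the right-hand side.
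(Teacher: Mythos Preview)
Your proposal is correct and follows essentially the same route as the paper: the paper also invokes the Karakashian--Pascal averaging operator (citing their Theorem~2.1 directly rather than redoing the construction) to produce $\bm{q}_h \in \bm{V}_h^m \cap H^1(\Omega)^d$ with $\|\nabla(\bm{p}_h-\bm{q}_h)\|_{L^2(\MTh)}$ controlled by the interior jumps, applies Lemma~\ref{le:dMaxwell} to $\bm{q}_h$, uses $\nabla\times\bm{p}_h=0$ piecewise to absorb the curl term, and handles the boundary tangential trace of $\bm{q}_h$ by the triangle inequality and a trace estimate exactly as you outline.
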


\begin{proof}
  Clearly, $\bm{p}_h \in \bm{V}_h^m$.  By \cite[Theorem
  2.1]{Karakashian2007convergence}, there exists a polynomial
  ${\bm{q}}_h \in \bm{V}_h^m \cap H^1(\Omega)^d$ satisfying the
  estimate, 
  \begin{equation}
    \|\nabla^\alpha (\bm{p}_h - {\bm{q}}_h) \|_{L^2(\MTh)}^2 \leq C
    \sum_{e \in \MEh^i} h_e^{1-2\alpha} \| \jump{\bm{p}_h \otimes \un}
    \|^2_{L^2(e)}, \quad \alpha = 0, 1.
    \label{eq:pqdiff}
  \end{equation}
  Lemma \ref{le:dMaxwell} and the estimate \eqref{eq:dMaxwell} imply
  that 
  \begin{displaymath}
    \| \nabla \bm{q}_h \|_{L^2(\Omega)} \leq \|\nabla \cdot \bm{q}_h
    \|_{L^2(\Omega)} + \|\nabla \times \bm{q}_h \|_{L^2(\Omega)} + C
    \left( \sum_{e \in \MEh^b} \frac{1}{h_e} \| \bm{q}_h \times \un
    \|_{L^2(e)}^2 \right)^{1/2},
  \end{displaymath}
  Hence, we have that
  \begin{displaymath}
    \begin{aligned}
      \|\nabla \bm{p}_h \|_{L^2(\MTh)} & \leq \| \nabla \bm{q}_h
      \|_{L^2(\Omega)} + \| \nabla (\bm{p}_h - \bm{q}_h)
      \|_{L^2(\MTh)} \\
      & \leq \|\nabla \cdot \bm{q}_h \|_{L^2(\Omega)} + \|\nabla
      \times \bm{q}_h \|_{L^2(\Omega)} \\ 
      & + C \left( \sum_{e \in \MEh^b} \frac{1}{h_e} \| \bm{q}_h
      \times \un \|_{L^2(e)}^2 \right)^{1/2} + \|\nabla (\bm{p}_h -
      \bm{q}_h) \|_{L^2(\MTh)} \\ 
      & \leq \|\nabla \cdot \bm{p}_h \|_{L^2(\MTh)} + \|\nabla
      \times \bm{p}_h \|_{L^2(\MTh)} \\
      & + C \left( \sum_{e \in \MEh^b} \frac{1}{h_e} \| \bm{q}_h
      \times \un \|_{L^2(e)}^2 \right)^{1/2} + C \|\nabla (\bm{p}_h -
      \bm{q}_h) \|_{L^2(\MTh)} \\ 
      & \leq \|\nabla \cdot \bm{p}_h \|_{L^2(\MTh)} + C \left( \sum_{e
      \in \MEh^b} \frac{1}{h_e} \| \bm{q}_h \times \un \|_{L^2(e)}^2 +
      \sum_{e \in \MEh^i} \frac{1}{h_e} \jump{ \bm{p}_h \otimes \un}
      \|_{L^2(e)}^2 \right)^{1/2}
    \end{aligned}
  \end{displaymath}
  By the trace estimate, we deduce that
  \begin{displaymath}
    \begin{aligned}
      \sum_{e \in \MEh^b} h_e^{-1} \| \bm{q}_h \times \un
      \|_{L^2(e)}^2 & \leq C \left( \sum_{e \in \MEh^b} h_e^{-1} \|
      \bm{p}_h \times \un \|_{L^2(e)}^2  +  \sum_{e \in \MEh^b}
      h_e^{-1} \| (\bm{p}_h - \bm{q}_h) \times \un \|_{L^2(e)}^2
      \right)  \\
      &\leq C \left( \sum_{e \in \MEh^b} h_e^{-1} \| \bm{p}_h \times \un
      \|_{L^2(e)}^2 + \sum_{K \in \MTh} \| \nabla(\bm{p}_h - \bm{q}_h)
      \|_{L^2(K)}^2 \right) \\
      & \leq C  \left( \sum_{e \in \MEh^b} h_e^{-1} \| \bm{p}_h \times
      \un \|_{L^2(e)}^2 + \sum_{e \in \MEh^i} h_e^{-1} \|
      \jump{\bm{p}_h \otimes \un} \|^2_{L^2(e)} \right). 
    \end{aligned}
  \end{displaymath}
  Combining the estimates above can yield the estimate
  \eqref{eq:discreteMT}, which completes the proof.
\end{proof}
To end this section, we outline a method for constructing bases for
the space $\bmr{S}_h^m$. One can take the gradient of the natural
basis polynomials 
\begin{displaymath}
  1, x, y, x^2, xy, y^2, \cdots
\end{displaymath}
to get a basis for the finite elements of $\bmr{S}_h^m$. For an
instance, in two dimensions if linear accuracy is considered, one
could obtain the basis functions,
\begin{displaymath}
  \vect{1}{0}, \ \vect{0}{1},\ \vect{x}{0}, \  \vect{0}{y}, \
  \vect{y}{x}. 
\end{displaymath}
Furthermore, there are also 4 second-order and 5 third-order basis
functions: 
\begin{displaymath}
  \vect{x^2}{0}, \ \vect{2xy}{x^2}, \ \vect{y^2}{2xy}, \
  \vect{0}{y^2}, 
\end{displaymath}
and
\begin{displaymath}
  \vect{x^3}{0}, \ \vect{3x^2y}{x^3}, \ \vect{xy^2}{x^2y}, \
  \vect{y^3}{3xy^2}, \ \vect{0}{y^3}.
\end{displaymath}
For the case $d = 3$, the basis functions could be constructed in a
similar way: for $m=1$, there are 9 basis functions which read
\begin{displaymath}
  \vech{1}{0}{0}, \ \vech{0}{1}{0}, \ \vech{0}{0}{1}, \ 
  \vech{x}{0}{0}, \ \vech{y}{x}{0}, \ \vech{z}{0}{x}, \ 
  \vech{0}{y}{0}, \ \vech{0}{z}{y}, \ \vech{0}{0}{z}.
\end{displaymath}
In our implementation, a normalization and a translation of the
coordinates is applied to guarantee the numerical stability
\cite{Liu2008note}. Taking 2D case as an example, we denote $(X, Y)$
in each element by
\begin{displaymath}
  X = \frac{x - x_c}{\sqrt{T}}, \quad Y = \frac{y - y_c}{\sqrt{T}},
\end{displaymath}
where $(x_c, y_c)$ is the barycenter of the triangular element and $T$
is its area. Substituting $(X, Y)$ for $(x, y)$ in these basis
functions could share a better numerical stability while the local
irrotational property still holds. 


\section{Sequential Least Squares Method}
\label{sec:dlsm}
In this section, we consider a least squares method based on the
first-order system \eqref{eq:upnondiv} to approximate $\bm{p}$ and $u$
sequentially. Let us first define a least squares functional
$J_h^{\bm{\mr{p}}}(\cdot)$ by 
\begin{equation}
  \begin{aligned}
    J_h^{\bm{\mr{p}}}( \bm{q}) := \sum_{K \in \MTh} \| A:\nabla \bm{q}
    - f \|_{L^2(K)}^2 & + \sum_{e \in \MEh^i} \frac{\mu}{h_e} \|
    \jump{ \bm{q} \otimes \un } \|_{L^2(e)}^2 \\ 
    & + \sum_{e \in \MEh^b} \frac{\mu}{h_e} \| \bm{q} \times \un -
    \nabla g \times \un \|_{L^2(e)}^2,
  \end{aligned}
  \label{eq:pfunc}
\end{equation}
for seeking a numerical approximation of the variable $\bm{p}$.  The
functional $J_h^{\bm{\mr{p}}}( \cdot)$ consists of the part related to
the gradient $\bm{p}$ in \eqref{eq:upnondiv} and the terms on the
faces, and $\mu$ is the penalty parameter which will be specified
later on. We note that the boundary condition in \eqref{eq:upnondiv}
provides the tangential trace of the gradient on the boundary.
Minimizing the problem \eqref{eq:pfunc} in the space $\bmr{S}_h^m$
will give an approximation to the gradient $\bm{p}$, which reads
\begin{equation}
  \inf_{\bm{q}_h \in \bmr{S}_h^m} J_h^{\bm{\mr{p}}}(\bm{q}_h).
  \label{eq:infp}
\end{equation}
Thus, the corresponding variational equation takes the form: find
$\bm{p}_h \in \bmr{S}_h^m$ such that
\begin{equation}
  a^{\bm{\mr{p}}}_h(\bm{p}_h, \bm{q}_h) = l_h^{\bm{\mr{p}}}(\bm{q}_h),
  \quad \forall \bm{q}_h \in \bmr{S}_h^m,
  \label{eq:varp}
\end{equation}
where the bilinear form $a_h^{\bm{\mr{p}}}(\cdot, \cdot)$ is 
\begin{equation}
  \begin{aligned}
    a_h^{\bm{\mr{p}}}(\bm{p}_h, \bm{q}_h) = \sum_{K \in \MTh} \int_K
    (A: \nabla \bm{p}_h)(A: \nabla \bm{q}_h) \d{x} &+ \sum_{e \in
    \MEh^i} \int_e \frac{\mu}{h_e} \jump{\bm{p} \otimes \un}
    \jump{\bm{q}_h \otimes \un } \d{s} \\
    &+ \sum_{e \in \MEh^b} \int_e \frac{\mu}{h_e} (\bm{p}_h \times
    \un) \cdot (\bm{q}_h \times \un) \d{s},  \\
  \end{aligned}
  \label{eq:bilinear}
\end{equation}
and the linear form $l_h^{\bm{\mr{p}}}(\cdot) $ is 
\begin{displaymath}
  l_h^{\bm{\mr{p}}}(\bm{q}_h) = \sum_{K \in \MTh} \int_K f(A:\nabla
  \bm{q}_h) \d{x} + \sum_{e \in \MEh^b} \int_e \frac{\mu}{h_e}
  (\bm{q}_h \times \un) \cdot (\nabla g \times \un) \d{s}.
\end{displaymath}
We follow \cite{Smears2013discontinuous, Smears2014discontinuous} to
define a constant $\gamma$ as
\begin{equation}
  \gamma = \frac{\tr{A}}{|A|^2},
  \label{eq:gamma}
\end{equation}
and the Cordes condition \eqref{eq:cordes} provides the following
inequality. 
\begin{lemma}
  Let $\gamma$ be defined by \eqref{eq:gamma} and $A(\bm{x}) \in
  L^\infty(\Omega)^{d\times d}$ satisfy Cordes condition, then for
  any matrix $B \in \mb{R}^{d \times d}$ we have that
  \begin{equation}
    |\gamma A: B - \tr{B} | \leq \sqrt{1 - \varepsilon} |B|,
    \label{eq:gammaineq}
  \end{equation}
  where $\varepsilon$ is given in \eqref{eq:cordes}.
  \label{le:gammaineq}
\end{lemma}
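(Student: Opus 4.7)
The plan is to reduce the inequality to a Cauchy--Schwarz bound on the Frobenius inner product, and then squeeze the resulting constant using the Cordes condition. The key algebraic observation is that $\tr{B} = I : B$, so that
\begin{displaymath}
  \gamma A : B - \tr{B} = (\gamma A - I) : B,
\end{displaymath}
where $I$ denotes the $d \times d$ identity matrix. Applying Cauchy--Schwarz with respect to the Frobenius inner product gives $|\gamma A : B - \tr{B}| \leq |\gamma A - I| \, |B|$, so the entire problem reduces to establishing the pointwise bound $|\gamma A - I|^2 \leq 1 - \varepsilon$ almost everywhere.

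Next I would expand this squared Frobenius norm. Using $A : I = \tr A$ and $|I|^2 = d$, one obtains
\begin{displaymath}
  |\gamma A - I|^2 = \gamma^2 |A|^2 - 2 \gamma \tr A + d.
\end{displaymath}
Substituting the definition $\gamma = \tr A / |A|^2$ from \eqref{eq:gamma} collapses the first two terms, yielding
\begin{displaymath}
  |\gamma A - I|^2 = d - \frac{(\tr A)^2}{|A|^2}.
\end{displaymath}

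Finally I would invoke the Cordes condition \eqref{eq:cordes} in its reciprocal form $(\tr A)^2/|A|^2 \geq d - 1 + \varepsilon$, which immediately gives $|\gamma A - I|^2 \leq 1 - \varepsilon$ and hence \eqref{eq:gammaineq}. There is no real obstacle here: the statement is essentially a repackaging of the Cordes condition, and each step is a short algebraic manipulation. The only point requiring a little care is the reinterpretation of the trace as a Frobenius inner product against the identity, which is what allows Cauchy--Schwarz to act in the right geometry; once that is in place, the rest is a one-line computation.
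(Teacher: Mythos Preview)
Your proof is correct and follows essentially the same route as the paper: both rewrite $\gamma A:B-\tr{B}=(\gamma A-I):B$, apply Cauchy--Schwarz in the Frobenius inner product, and then bound $|\gamma A-I|$ via the Cordes condition. If anything, you are more explicit than the paper, which jumps directly to $\sqrt{1-\varepsilon}$ without displaying the computation $|\gamma A-I|^2=d-(\tr A)^2/|A|^2$ that you supply.
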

\begin{proof}
  By direct calculation, we obtain 
  \begin{displaymath}
    \begin{aligned} 
      |\gamma A: B - \tr{B} | & = \left| \sum_{i, j=1}^d (\gamma
      a_{ij} - \delta_{ij})b_{ij} \right| \leq \left( \sum_{i,j=1}^d |
      \gamma a_{ij} - \delta_{ij} |\right)^{1/2} |B| \\
      & \leq \sqrt{1 - \varepsilon} |B|, \\
    \end{aligned}
  \end{displaymath}
  which completes the proof.
\end{proof}
In particular, for any $\bm{q}_h \in \bmr{S}_h^m$ we set $B = \nabla
\bm{q}_h$ in \eqref{eq:gammaineq} and one has the following
estimate:
\begin{equation}
  |\gamma A:\nabla \bm{q}_h - \nabla \cdot \bm{q}_h | \leq \sqrt{1 -
  \varepsilon} | \nabla \bm{q}_h|, \quad \text{a.e. in } \Omega,
  \label{eq:gammaAp}
\end{equation}
which is central in the convergence analysis. 

Further we will focus on the continuity and coercivity of the
bilinear form $a_h^{\bm{\mr{p}}}(\cdot, \cdot)$. We begin by
introducing an energy norm $\pnorm{\cdot}$:
\begin{displaymath}
  \pnorm{ \bm{q}} := \left( \sum_{K \in \MTh} \| \nabla \bm{q}
  \|_{L^2(K)}^2 + \sum_{e \in \MEh^i} \frac{1}{h_e} \| \jump{ \bm{q}
  \otimes \un} \|_{L^2(e)}^2 + \sum_{e \in \MEh^b} \frac{1}{h_e} \|
  \bm{q} \times \un \|_{L^2(e)}^2 \right)^{1/2},
\end{displaymath}
for any $\bm{q} \in H^1(\MTh)^d$. We present the following lemma to
give a lower bound for the energy norm $\pnorm{\cdot}$.
\begin{lemma}
  For any $\bm{q} \in H^1(\MTh)^d$, the following inequality holds:
  \begin{equation}
    \| \bm{q} \|_{H^1(\MTh)} \leq C \pnorm{\bm{q}}.
    \label{eq:pupperbound}
  \end{equation}
  \label{le:pupperbound}
\end{lemma}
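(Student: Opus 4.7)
Since $\|\nabla \bm{q}\|_{L^2(\MTh)}^2$ is already one of the summands in $\pnorm{\bm{q}}^2$, the real task is to establish a discrete Poincar\'e--Friedrichs estimate $\|\bm{q}\|_{L^2(\Omega)} \leq C \pnorm{\bm{q}}$. The first ingredient I would prepare is the corresponding inequality in the $H^1$-conforming setting: for every $\bm{v} \in H^1(\Omega)^d$,
\[
  \|\bm{v}\|_{L^2(\Omega)} \leq C\bigl(\|\nabla \bm{v}\|_{L^2(\Omega)} + \|\bm{v} \times \un\|_{L^2(\partial \Omega)}\bigr).
\]
The standard Peetre--Tartar compactness argument reduces this to checking that the map $\bm{v} \mapsto (\nabla \bm{v}, \bm{v}\times\un|_{\partial\Omega})$ has trivial kernel. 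An element of the kernel is a constant vector whose cross product with every boundary outward normal vanishes; because the polygonal or polyhedral boundary of the convex domain $\Omega$ carries at least $d$ linearly independent outward normals, such a constant must be zero.

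For a general $\bm{q} \in H^1(\MTh)^d$ I would then reduce to the conforming case by constructing a conforming surrogate $\bm{q}^* \in H^1(\Omega)^d$ via an Oswald / Karakashian--Pascal averaging operator. For piecewise polynomial data the construction delivers
\[
  \|\bm{q} - \bm{q}^*\|_{L^2(\Omega)}^2 + \|\nabla(\bm{q} - \bm{q}^*)\|_{L^2(\MTh)}^2 \leq C \sum_{e \in \MEh^i} h_e^{-1}\|\jump{\bm{q}}\|_{L^2(e)}^2,
\]
with an analogous boundary tangential-trace bound controlled by $\sum_{e \in \MEh^b} h_e^{-1}\|\bm{q}\times\un\|_{L^2(e)}^2$. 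Applying the conforming Friedrichs inequality of the previous step to $\bm{q}^*$ and invoking the triangle inequality collapses everything to quantities in $\pnorm{\bm{q}}$, once one records the pointwise identity $|\jump{\bm{q}\otimes\un}| = |\bm{q}^+-\bm{q}^-| = |\jump{\bm{q}}|$ on every interior face (immediate from $\un^-=-\un^+$ and $|\un|=1$). Adding the already-controlled gradient contribution then yields \eqref{eq:pupperbound}.

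\textbf{Main obstacle.} The technical heart is the enriching operator for an arbitrary (non-polynomial) element of $H^1(\MTh)^d$, since the Karakashian--Pascal theorem used in the proof of Lemma \ref{le:discreteMT} is stated only for piecewise polynomials. I expect to handle this by first replacing $\bm{q}$ locally by its $L^2$ projection onto a fixed-degree polynomial space, applying the standard averaging to the projection, and absorbing the approximation error into the same interior-jump seminorm through routine trace and inverse estimates on each element. The boundary tangential-trace term must be carried through the same averaging construction so that no data beyond those appearing in $\pnorm{\bm{q}}$ is ever introduced.
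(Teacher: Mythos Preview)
Your plan is sound and would yield the inequality, but it follows a genuinely different route from the paper. The paper does \emph{not} pass through a conforming surrogate at all: instead it applies the Helmholtz decomposition $\bm{q}=\nabla v+\nabla^{\perp}\phi$ (with $v\in H^1_0(\Omega)$, $\phi\in H^1(\Omega)$ and the stability bound $\|v\|_{H^1}+\|\phi\|_{H^1}\le C\|\bm{q}\|_{L^2}$), integrates $\|\bm{q}\|_{L^2(\Omega)}^2=(\bm{q},\nabla v)+(\bm{q},\nabla^{\perp}\phi)$ by parts element by element, and bounds the resulting volume terms $\int_K v\,\nabla\!\cdot\bm{q}$, $\int_K \phi\,\nabla\!\times\bm{q}$ and face terms $\int_e\jump{\bm{q}\cdot\un}v$, $\int_e\jump{\bm{q}\times\un}\phi$ by Cauchy--Schwarz and the trace estimate $h_e\|v\|_{L^2(e)}^2\le C\|v\|_{H^1(K)}^2$. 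This gives $\|\bm{q}\|_{L^2}^2\le C\pnorm{\bm{q}}\,\|\bm{q}\|_{L^2}$ directly, without any polynomial projection or averaging, and therefore applies immediately to arbitrary $\bm{q}\in H^1(\MTh)^d$---precisely the issue you flag as your main obstacle. Your approach, by contrast, recycles the Karakashian--Pascal machinery already invoked in Lemma~\ref{le:discreteMT} and localises the argument to a Peetre--Tartar kernel computation; the price is the extra layer of $L^2$ projection onto polynomials (with the attendant $H^1$-stability and trace bookkeeping you outline) to make the enriching operator applicable. Both arguments are valid; the paper's duality argument is shorter and sidesteps the non-polynomial difficulty entirely, while yours is more modular and makes the role of the boundary tangential trace in killing the constant kernel more transparent.
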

\begin{proof}
  It is sufficient to prove $ \| \bm{q} \|_{L^2(\Omega)} \leq C
  \pnorm{\bm{q}}$ for the estimate \eqref{eq:pupperbound}. To do so,
  we apply the Helmholtz decomposition of $L^2(\Omega)^d$. Here we
  proof for the planar case and it is trivial to extend the proof in
  three dimensions. Since $\bm{q} \in L^2(\Omega)$, there exist
  functions $v \in H^1_0(\Omega)$ and $\phi \in H^1(\Omega)$ such that 
  \begin{displaymath}
    \bm{q} = \nabla v + \nabla^\perp \times \phi := \begin{bmatrix}
      \partial_x v \\ \partial_y v \\
    \end{bmatrix} + \begin{bmatrix}
      \partial_y \phi \\ - \partial_x \phi \\
    \end{bmatrix},
  \end{displaymath}
  and the following stability holds
  \begin{displaymath}
    \|v\|_{H^1(\Omega)} + \| \phi \|_{H^1(\Omega)} \leq C \| \bm{q}
    \|_{L^2(\Omega)}.
  \end{displaymath}
  We refer to \cite{girault1986finite, Bensow2005discontinuous} for
  the detail of the decomposition. Then applying the integration by
  parts, together with the Helmholtz decomposition, we deduce that 
  \begin{displaymath}
    \begin{aligned}
      &\|\bm{q} \|_{L^2(\Omega)}^2  = \sum_{K \in \MTh} \int_K \nabla v
      \cdot \bm{q} \d{x} + \sum_{K \in \MTh} \int_K (\nabla^\perp
      \times \phi) \cdot \bm{q} \d{x} \\
      & = -\sum_{K \in \MTh} \int_K v \nabla \cdot \bm{q} \d{x} +
      \sum_{e \in \MEh^i} \int_e \jump{\bm{q} \cdot \un} v \d{s} -
      \sum_{K \in\MTh} \int_K \phi \nabla \times \bm{q} \d{x} +
      \sum_{e \in \MEh} \int_e \jump{\bm{q} \times \un} \phi \d{s}.
    \end{aligned}
  \end{displaymath}
  For the first term and third term, using the Cauchy-Schwarz
  inequality and the regularity estimate implies 
  \begin{displaymath}
    \begin{aligned} 
      \sum_{K \in \MTh} \int_K v \nabla \cdot \bm{q} \d{x} +  \sum_{K
      \in\MTh} \int_K \phi \nabla \times & \bm{q} \d{x} \leq C \left(
      \|v\|_{L^2(\Omega)} \|\nabla \cdot \bm{q} \|_{L^2(\MTh)} +
      \|\phi\|_{L^2(\Omega)} \| \nabla \times \bm{q}
      \|_{L^2(\MTh)}\right) \\
      &\leq C \left( \|v\|_{L^2(\Omega)} + \| \phi \|_{L^2(\Omega)}
      \right) \left( \|\nabla \cdot \bm{q} \|_{L^2(\MTh)} + \| \nabla
      \times \bm{q} \|_{L^2(\MTh)} \right) \\
      & \leq C \|\bm{q} \|_{L^2(\Omega)} \pnorm{\bm{q}}. \\
    \end{aligned}
  \end{displaymath}
  Moreover, we apply the trace inequality and Cauchy-Schwarz
  inequality to find 
  \begin{displaymath}
    \sum_{e \in \MEh^i} \int_e \jump{\bm{q} \cdot \un} v \d{s} \leq
    \left( \sum_{e \in \MEh^i} \int_e \frac{1}{h_e} |\jump{\bm{q}
    \cdot \un} |^2 \d{s} \right)^{1/2} \left(  \sum_{e \in \MEh^i}
    \int_e h_e |v |^2 \d{s} \right)^{1/2},   
  \end{displaymath}
  and 
  \begin{displaymath}
    h_e \|v\|_{L^2(e)}^2 \leq C \|v\|_{H^1(K)}^2, \quad e \in
    \partial K, 
  \end{displaymath}
  for any $K \in \MTh$. Hence, we have 
  \begin{displaymath}
    \sum_{e \in \MEh^i} \int_e \jump{\bm{q} \cdot \un} v \d{s} \leq C
    \pnorm{\bm{q}} \|v\|_{H^1(\Omega)} \leq C \pnorm{\bm{q}} \| \bm{q}
    \|_{L^2(\Omega)},
  \end{displaymath}
  and similarly we have the following estimate for the last term,
  \begin{displaymath}
    \begin{aligned}
      \sum_{e \in \MEh} \int_e \jump{\bm{q} \times \un} \phi \d{s}
      & \leq \left( \sum_{e \in \MEh} \int_e \frac{1}{h_e} |\jump{\bm{q}
      \times \un} |^2 \d{s} \right)^{1/2} \left(  \sum_{e \in \MEh^i}
      \int_e h_e |\phi |^2 \d{s} \right)^{1/2} \\    
      & \leq C \pnorm{\bm{q}} \|\phi\|_{H^1(\Omega)} \\ 
      & \leq C \pnorm{\bm{q}} \|\bm{q} \|_{L^2(\Omega)}. \\ 
    \end{aligned}
  \end{displaymath}
  Combining all inequalities immediately gives the estimate $\| \bm{q}
  \|_{L^2(\Omega)}^2 \leq C \pnorm{\bm{q}} \| \bm{q}
  \|_{L^2(\Omega)}$. By eliminating $\| \bm{q} \|_{L^2(\Omega)}$ we
  reach the inequality \eqref{eq:pupperbound}, which completes the
  proof.
\end{proof}

Then we claim that the bilinear form $a_h^{\bm{\mr{p}}}(\cdot, \cdot)$
is bounded and coercive with respect to the energy norm $
\pnorm{\cdot} $ for any positive $\mu$.
\begin{theorem}
  Let the coefficient $A(\bm{x}) \in L^\infty(\Omega)^{d\times d}$
  satisfy Cordes condition and let the bilinear form
  $a_h^{\bm{\mr{p}}}(\cdot, \cdot)$ be defined by \eqref{eq:bilinear}
  with any positive $\mu$, then $a_h^{\bm{\mr{p}}}(\cdot, \cdot)$
  satisfies the properties of the boundedness and coercivity:
  \begin{align}
    |a_h^{\bm{\mr{p}}} (\bm{p}, \bm{q})| & \leq C \pnorm{\bm{p}}
    \pnorm{\bm{q}}, \quad \forall \bm{p}, \bm{q} \in
    H^1(\MTh)^d, \label{eq:bounded} \\
    a_h^{\bm{\mr{p}}}(\bm{p}_h, \bm{p}_h) & \geq C \pnorm{\bm{p}_h}^2,
    \quad \forall \bm{p}_h \in \bmr{S}_h^m.
    \label{eq:coercive} 
  \end{align}
  \label{th:bounded}
\end{theorem}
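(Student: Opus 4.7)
\medskip
\noindent\textbf{Proof proposal.}
The plan is to handle the two bounds separately, with boundedness falling out of Cauchy--Schwarz and the $L^\infty$-bound on $A$, and coercivity being the real content, driven by the Cordes bound \eqref{eq:gammaAp} together with the discrete Maxwell-type inequality of Lemma \ref{le:discreteMT}.

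For \eqref{eq:bounded}, I would estimate the three terms of $a_h^{\bm{\mr{p}}}$ one at a time. The volume term is bounded by Cauchy--Schwarz and the pointwise estimate $|A:\nabla\bm{p}|\le|A||\nabla\bm{p}|\le \sqrt{d}\,\overline{\nu}\,|\nabla\bm{p}|$, giving a constant multiple of $\|\nabla\bm{p}\|_{L^2(\MTh)}\|\nabla\bm{q}\|_{L^2(\MTh)}$, which is dominated by $\pnorm{\bm{p}}\pnorm{\bm{q}}$. The two face terms are already expressed in exactly the face norms that make up $\pnorm{\cdot}$, so Cauchy--Schwarz over faces delivers the bound with a constant proportional to $\mu$.

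For \eqref{eq:coercive}, fix $\bm{p}_h\in\bmr{S}_h^m$, so that $\nabla\times\bm{p}_h=0$ on every $K$. The starting point is the pointwise Cordes inequality \eqref{eq:gammaAp}, which after squaring, integrating over $K$ and summing gives
\begin{displaymath}
  \|\nabla\cdot\bm{p}_h\|_{L^2(\MTh)}
  \;\le\;\|\gamma\|_{L^\infty(\Omega)}\,\|A:\nabla\bm{p}_h\|_{L^2(\MTh)}
  +\sqrt{1-\varepsilon}\,\|\nabla\bm{p}_h\|_{L^2(\MTh)},
\end{displaymath}
where $\|\gamma\|_{L^\infty(\Omega)}$ is finite by uniform ellipticity. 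Plugging this into the piecewise irrotational Maxwell estimate \eqref{eq:discreteMT} of Lemma \ref{le:discreteMT}, one obtains
\begin{displaymath}
  \|\nabla\bm{p}_h\|_{L^2(\MTh)}
  \;\le\;\|\gamma\|_{L^\infty(\Omega)}\|A:\nabla\bm{p}_h\|_{L^2(\MTh)}
  +\sqrt{1-\varepsilon}\,\|\nabla\bm{p}_h\|_{L^2(\MTh)}
  +C\Bigl(\textstyle\sum_{e\in\MEh^i}h_e^{-1}\|\jump{\bm{p}_h\otimes\un}\|_{L^2(e)}^2
  +\sum_{e\in\MEh^b}h_e^{-1}\|\bm{p}_h\times\un\|_{L^2(e)}^2\Bigr)^{1/2}.
\end{displaymath}

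The crucial step, and the main obstacle, is absorbing the curl-term surrogate: because $\varepsilon>0$ the factor $1-\sqrt{1-\varepsilon}$ is strictly positive and independent of $h$, so the $\sqrt{1-\varepsilon}\,\|\nabla\bm{p}_h\|_{L^2(\MTh)}$ term moves to the left-hand side. Squaring (with an elementary $(a+b)^2\le 2a^2+2b^2$), recognizing that the remaining face quantities equal $\mu^{-1}$ times the penalty contributions in $a_h^{\bm{\mr{p}}}(\bm{p}_h,\bm{p}_h)$, and adding those same face norms to both sides to reconstruct $\pnorm{\bm{p}_h}^2$ on the left, yields
\begin{displaymath}
  \pnorm{\bm{p}_h}^2 \;\le\; C\,\max\!\bigl(1,\mu^{-1}\bigr)\,a_h^{\bm{\mr{p}}}(\bm{p}_h,\bm{p}_h),
\end{displaymath}
with $C$ depending only on $\Omega$, the ellipticity constants and $\varepsilon$. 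This is exactly \eqref{eq:coercive}, and the argument works for every $\mu>0$ as claimed. The only subtlety to double-check is that Lemma \ref{le:discreteMT} applies to $\bmr{S}_h^m$ even though its hypothesis is $\bm{p}_h\in\bmr{S}_h^m$ with no $H^1$-conformity; since the statement is already in that form, nothing further is needed.
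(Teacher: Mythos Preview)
Your proposal is correct and follows essentially the same route as the paper: Cauchy--Schwarz plus $A\in L^\infty$ for boundedness, and for coercivity the Cordes estimate \eqref{eq:gammaAp} combined with the discrete Maxwell inequality of Lemma~\ref{le:discreteMT}, absorbing the $\sqrt{1-\varepsilon}\,\|\nabla\bm{p}_h\|_{L^2(\MTh)}$ term to the left. The only difference is cosmetic: the paper squares the pointwise Cordes bound and uses a Young-inequality parameter to produce a coefficient $1-C_1<1$ before integrating, then appeals to a ``scaling argument'' for general $\mu$, whereas you work directly at the norm level and track the $\mu$-dependence explicitly as $\max(1,\mu^{-1})$, which is a bit cleaner.
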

\begin{proof}
  We first prove the boundedness property \eqref{eq:bounded}. Together
  with Cauchy-Schwarz inequality, one has that 
  \begin{displaymath}
    \begin{aligned}
      a_h^{\bm{\mr{p}}}(\bm{p}, \bm{q}) \leq &\left( \sum_{K \in \MTh}
      \| A: \nabla \bm{p}  \|_{L^2(K)}^2 + \sum_{e \in \MEh^i}
      \frac{\mu}{h_e} \| \jump{\bm{p} \otimes \un } \|_{L^2(e)}^2 +
      \sum_{e \in \MEh^b} \frac{\mu}{h_e} \| \bm{p} \times \un
      \|_{L^2(e)}^2 \right)^{1/2} \\
      & \left( \sum_{K \in \MTh} \| A: \nabla \bm{q}
      \|_{L^2(K)}^2 + \sum_{e \in \MEh^i} \frac{\mu}{h_e} \|
      \jump{\bm{q} \otimes \un } \|_{L^2(e)}^2 + \sum_{e \in \MEh^b}
      \frac{\mu}{h_e} \| \bm{q} \times \un \|_{L^2(e)}^2
      \right)^{1/2}.
      \\
    \end{aligned}
  \end{displaymath}
  Since $A \in L^\infty(\Omega)^{d \times d}$, we immediately get 
  \begin{displaymath}
    \|A :\nabla \bm{p} \|_{L^2(\MTh)} \leq C \| \bm{p} \|_{H^1(\MTh)},
    \quad \|A :\nabla \bm{q} \|_{L^2(\MTh)} \leq C \| \bm{q}
    \|_{H^1(\MTh)}, 
  \end{displaymath}
  which implies the estimate \eqref{eq:bounded}.

  Then we consider the term $a_h^{\bm{\mr{p}}}(\bm{p}_h, \bm{p}_h)$
  and the definition of $\pnorm{\cdot}$ indicates that it is
  sufficient to prove 
  \begin{displaymath}
    a_h^{\bm{\mr{p}}}(\bm{p}_h, \bm{p}_h) \geq C \|\nabla \bm{p}_h
    \|_{L^2(\MTh)}^2,
  \end{displaymath}
  for the coercivity of the bilinear form.  Let $\gamma$ be defined by
  \eqref{eq:gamma} and the triangle inequality shows that
  \begin{displaymath}
     | \gamma A : \nabla \bm{p}_h - \nabla \cdot \bm{p}_h |+  \gamma
     |A:\nabla \bm{p}_h| \geq |\nabla \cdot \bm{p}_h|, \quad
     \text{a.e. in } \Omega.
  \end{displaymath}
  Together with the inequality \eqref{eq:gammaAp} and $\gamma \in
  L^\infty(\Omega)$, we obtain 
  \begin{displaymath}
     \sqrt{1 - \varepsilon} | \nabla \bm{p}_h| + \|\gamma
     \|_{L^\infty(\Omega)} | A:\nabla \bm{p}_h | \geq | \nabla \cdot
     \bm{p}_h| , \quad \text{a.e. in } \Omega.
  \end{displaymath}
  By using the Cauchy-Schwarz inequality, we observe that 
  \begin{displaymath}
    \begin{aligned}
      (1 - \varepsilon)|\nabla \bm{p}_h|^2 +
      \|\gamma\|_{L^\infty(\Omega)}^2|A:\nabla \bm{p}_h|^2 +  2\sqrt{1
      - \varepsilon} | \nabla \bm{p}_h| \|\gamma \|_{L^\infty(\Omega)}
      & | A:\nabla  \bm{p}_h |  \geq |\nabla \cdot \bm{p}_h|^2 \\
      (1 - \varepsilon + C\sqrt{1 - \varepsilon}) |\nabla \bm{p}_h|^2
      + \left( \|\gamma\|_{L^\infty(\Omega)}^2 +
      \frac{\|\gamma\|_{L^\infty(\Omega)}}{C} \right) |A:\nabla
      \bm{p}_h|^2 & \geq  |\nabla \cdot \bm{p}_h|^2 \quad \text{a.e.
      in } \Omega, \\
    \end{aligned}
  \end{displaymath}
  for any $C > 0$. Since $1 - \varepsilon < 1$, we take a
  proper $C > 0$ such that there exist two constants $0 < C_1 < 1$,
  $C_2 > 0$ satisfying
  \begin{displaymath}
    (1 - C_1) |\nabla \bm{p}_h|^2 + C_2 |A:\nabla \bm{p}_h|^2  \geq
    |\nabla \cdot \bm{p}_h|^2 \quad \text{a.e.  in } \Omega.
  \end{displaymath}
  Integration over all elements gives us that 
  \begin{displaymath}
    (1 - C_1) \sum_{K \in \MTh} \|\nabla \bm{p}_h \|_{L^2(K)}^2 + C_2
    \sum_{K \in \MTh} \| A:\nabla \bm{p}_h\|_{L^2(K)}^2 \geq \sum_{K
    \in \MTh} \|\nabla \cdot \bm{p}_h \|_{L^2(K)}^2.
  \end{displaymath}
  By the estimate \eqref{eq:discreteMT}, we first select a
  sufficiently large $\mu$ to derive
  \begin{displaymath}
    \begin{aligned}
      (1 - C_1) \sum_{K \in \MTh} \|\nabla \bm{p}_h \|_{L^2(K)}^2 & +
      C_2 \sum_{K \in \MTh} \| A:\nabla \bm{p}_h\|_{L^2(K)}^2 +
      \sum_{e \in \MEh^i} \frac{C_2 \mu}{h_e} \| \jump{\bm{q} \otimes
      \un } \|_{L^2(e)}^2  \\ 
      + \sum_{e \in \MEh^b} \frac{C_2 \mu}{h_e} \| \bm{q} \times \un
      \|_{L^2(e)}^2 \geq& \sum_{K \in \MTh} \|\nabla \cdot \bm{p}_h
      \|_{L^2(K)}^2 +  \sum_{e \in \MEh^i} \frac{C_3}{h_e} \|
      \jump{\bm{q} \otimes \un } \|_{L^2(e)}^2    + \sum_{e \in
      \MEh^b} \frac{C_3}{h_e} \| \bm{q} \times \un \|_{L^2(e)}^2
      \\
      \geq & \sum_{K \in \MTh} \|\nabla \bm{p}_h \|_{L^2(K)}^2, \\
    \end{aligned}
  \end{displaymath}
  which actually yields 
  \begin{displaymath}
    a_h^{\bm{\mr{p}}}(\bm{p}_h, \bm{p}_h) \geq C  \sum_{K \in \MTh}
    \|\nabla \bm{p}_h \|_{L^2(K)}^2.
  \end{displaymath}
  With sufficiently large $\mu$, we have proven the coercivity
  \eqref{eq:coercive}. Note that by scaling arguments we conclude that
  for any positive $\mu$ the coercivity still holds, which completes
  the proof.
\end{proof}

We have established the existence and uniqueness of the solution to
the minimization problem \eqref{eq:infp} or equivalently to the
problem \eqref{eq:varp}. Then let us firstly give {\it a priori} error
estimate of the method proposed for seeking an approximation to the
gradient $\bm{p}$ in \eqref{eq:upnondiv}
\begin{theorem}
  Let $\bm{p} \in \bmr{I}^{m+1}(\Omega)$ be the solution to
  \eqref{eq:upnondiv} and let $\bm{p}_h \in \bmr{S}_h^m$ be the
  solution to \eqref{eq:varp}. Let the coefficient $A(\bm{x}) \in
  L^\infty(\Omega)^{d \times d}$ satisfy the Cordes condition, then
  the following estimate holds:
  \begin{equation}
    \pnorm{\bm{p} - \bm{p}_h} \leq Ch^m \|\bm{p} \|_{H^{m+1}(\Omega)}.
    \label{eq:dgerror}
  \end{equation}
  \label{th:error}
\end{theorem}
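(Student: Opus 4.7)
My plan is a standard C\'ea-lemma argument tailored to this least squares setting, followed by an approximation estimate from Lemma \ref{le:globalL2interpolation}. The first step is to establish consistency. Since $\bm{p}=\nabla u\in H^1(\Omega)^d$ is continuous across interior faces, $\jump{\bm{p}\otimes\un}=\bm{0}$ on every $e\in\MEh^i$. On $\MEh^b$ the boundary condition $u=g$ fixes the tangential gradient, so $\bm{p}\times\un=\nabla g\times\un$. Together with the strong-form identity $A:\nabla\bm{p}=f$ a.e., these observations give
$$a_h^{\bm{\mr{p}}}(\bm{p},\bm{q}_h)=l_h^{\bm{\mr{p}}}(\bm{q}_h),\qquad\forall\,\bm{q}_h\in\bmr{S}_h^m,$$
and subtracting \eqref{eq:varp} produces the Galerkin orthogonality $a_h^{\bm{\mr{p}}}(\bm{p}-\bm{p}_h,\bm{q}_h)=0$ for all $\bm{q}_h\in\bmr{S}_h^m$.

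Next I would apply the boundedness and coercivity of Theorem \ref{th:bounded} in combination with this orthogonality. For any $\bm{q}_h\in\bmr{S}_h^m$,
$$\pnorm{\bm{p}_h-\bm{q}_h}^2\leq C\,a_h^{\bm{\mr{p}}}(\bm{p}_h-\bm{q}_h,\bm{p}_h-\bm{q}_h)=C\,a_h^{\bm{\mr{p}}}(\bm{p}-\bm{q}_h,\bm{p}_h-\bm{q}_h)\leq C\pnorm{\bm{p}-\bm{q}_h}\pnorm{\bm{p}_h-\bm{q}_h},$$
so that by the triangle inequality
$$\pnorm{\bm{p}-\bm{p}_h}\leq C\inf_{\bm{q}_h\in\bmr{S}_h^m}\pnorm{\bm{p}-\bm{q}_h}.$$
Note that coercivity is used on $\bm{p}_h-\bm{q}_h\in\bmr{S}_h^m$, while boundedness on the broken space $H^1(\MTh)^d$ handles $\bm{p}-\bm{q}_h$; this is precisely why Theorem \ref{th:bounded} was stated in both forms.

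The final step is the approximation estimate, taking $\bm{q}_h=\Pi_h^{\bmr{S},m}\bm{p}$, which is well defined because $\bm{p}\in\bmr{I}^{m+1}(\Omega)$. The volume contribution $\sum_K\|\nabla(\bm{p}-\Pi_h^{\bmr{S},m}\bm{p})\|_{L^2(K)}^2$ is bounded directly by the $k=1$ case of \eqref{eq:globalL2interpolation}. For the boundary face terms, continuity of $\bm{p}$ on $\partial\Omega$ reduces $\bm{q}_h\times\un-\nabla g\times\un$ to $(\bm{p}-\Pi_h^{\bmr{S},m}\bm{p})\times\un$, which is controlled by the trace estimate in \eqref{eq:globalL2interpolation} with $k=0$. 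For interior jumps I would insert $\pm\bm{p}$, use $\jump{\bm{p}\otimes\un}=\bm{0}$, and apply the same trace bound on both sides of each interior face. Summing over $K\in\MTh$ and multiplying by $h_e^{-1}$ gives
$$\pnorm{\bm{p}-\Pi_h^{\bmr{S},m}\bm{p}}\leq Ch^m\|\bm{p}\|_{H^{m+1}(\Omega)},$$
and combining with the C\'ea estimate yields \eqref{eq:dgerror}.

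The only step that is not bookkeeping is making sure coercivity is available in the full energy norm $\pnorm{\cdot}$ rather than just as control of the functional value, but this is exactly the content of Theorem \ref{th:bounded}, which in turn leaned on the discrete Maxwell estimate in Lemma \ref{le:discreteMT} to recover $\|\nabla\bm{p}_h\|_{L^2(\MTh)}$ from $\|\nabla\cdot\bm{p}_h\|_{L^2(\MTh)}$ together with the jump and boundary tangential penalties. With that machinery in place the present theorem becomes a routine C\'ea-plus-interpolation argument, so I do not anticipate any further serious obstacle.
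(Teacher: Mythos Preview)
Your proposal is correct and follows essentially the same C\'ea-plus-interpolation argument as the paper: Galerkin orthogonality from consistency, then coercivity on $\bmr{S}_h^m$ together with boundedness on $H^1(\MTh)^d$ to obtain the best-approximation bound, and finally the choice $\bm{q}_h=\Pi_h^{\bmr{S},m}\bm{p}$ with Lemma~\ref{le:globalL2interpolation}. If anything, you supply more detail than the paper does on why the orthogonality holds and on handling the jump and boundary terms in $\pnorm{\cdot}$.
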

\begin{proof}
  The orthogonal property directly follows from the
  definitions of the bilinear form $a_h^{\bm{\mr{p}}}(\cdot, \cdot)$
  and linear form $l_h^{\bm{\mr{p}}}(\cdot)$: for any $\bm{q}_h \in
  \bmr{S}_h^m$, one has that
  \begin{displaymath}
    a_h^{\bm{\mr{p}}}(\bm{p} - \bm{p}_h, \bm{q}_h) = 0.
  \end{displaymath}
  Then for any $\bm{q}_h \in \bmr{S}_h^m$, together with the
  boundedness \eqref{eq:bounded} and coercivity \eqref{eq:coercive},
  there holds
  \begin{displaymath}
    \begin{aligned}
      \pnorm{ \bm{p}_h - \bm{q}_h }^2 & \leq C a_h^{\bm{\mr{p}}}(
      \bm{p}_h - \bm{q}_h, \bm{p}_h - \bm{q}_h) \\
      & =  C a_h^{\bm{\mr{p}}}( \bm{p} - \bm{q}_h, \bm{p}_h -
      \bm{q}_h) \\
      & \leq C \pnorm{\bm{p} - \bm{q}_h} \pnorm{ \bm{p}_h - \bm{q}_h}.
    \end{aligned}
  \end{displaymath}
  By eliminating $ \pnorm{ \bm{p}_h - \bm{q}_h}$, together with the
  triangle inequality, we observe that 
  \begin{displaymath}
    \begin{aligned}
      \pnorm{ \bm{p}_h - \bm{q}_h } & \leq C  \pnorm{\bm{p} -
      \bm{q}_h} \\
      \pnorm{ \bm{p}_h - \bm{q}_h } +  \pnorm{ \bm{p} - \bm{q}_h }
      & \leq C \pnorm{\bm{p} - \bm{q}_h} \\
      \pnorm{ \bm{p} - \bm{p}_h } &\leq C \inf_{\bm{q}_h \in
      \bmr{S}_h^m} \pnorm{\bm{p} - \bm{q}_h } \leq C \pnorm{\bm{p} -
      \Pi_h^{\bmr{S}, m} \bm{p}}. \\
    \end{aligned}
  \end{displaymath}
  From Lemma \ref{le:globalL2interpolation}, it is easy to deduce that 
  \begin{displaymath}
    \begin{aligned}
      \|\nabla(\bm{p} -  \Pi_h^{\bmr{S}, m} \bm{p})\|_{L^2(K)} & \leq
      C h^m \|\bm{p} \|_{H^{m+1}(K)}, \quad \forall K \in \MTh, \\
      h_e^{-1/2} \| \jump{ (\bm{p} -  \Pi_h^{\bmr{S}, m} \bm{p})
      \otimes \un} \|_{L^2(e)} & \leq C h^m \| \bm{p}
      \|_{H^{m+1}(K)}, \quad \forall e \in \partial K, \quad \forall K
      \in \MTh. \\
    \end{aligned}
  \end{displaymath}
  Hence, we conclude that
  \begin{displaymath}
    \pnorm{ \bm{p} - \bm{p}_h } \leq C \pnorm{\bm{p} -
    \Pi_h^{\bmr{S}, m} \bm{p}} \leq C h^m \| \bm{p} \|_{H^{m+1}
    (\Omega)},
  \end{displaymath}
  which gives us the estimate \eqref{eq:dgerror} and completes the
  proof.
\end{proof}

Until now, we have developed a discontinuous least squares finite
element method to get a numerical approximation to the variable $\bm
p$ in the system \eqref{eq:upnondiv}. After that, we propose another
least squares finite element method to obtain an approximation to $u$.
We introduce a least squares functional $J_h^{\bm{\mr{u}}}(\cdot;
\cdot)$ defined by
\begin{equation}
  J_h^{\bm{\mr{u}}}(v; \bm{q}_h ) := \sum_{K \in \MTh} \|\nabla v -
  \bm{q}_h \|_{L^2(K)}^2 + \sum_{e \in \MEh^b} \frac{1}{h} \|v -
  g\|_{L^2(e)}^2,
  \label{eq:ufunc}
\end{equation}
where $g$ is the boundary condition in \eqref{eq:upnondiv}. 
We minimize the functional \eqref{eq:ufunc} on the standard $C^0$
finite element space $ \wt{V}_h^m := V_h^m \cap H^1(\Omega)$, together
with the numerical gradient, to get a numerical approximation $u_h$.
Precisely, the minimization problem reads
\begin{equation}
  \inf_{v_h \in \wt{V}_h^m} J_h^{\bm{\mr{u}}}(v_h; \bm{p}_h),
  \label{eq:infu}
\end{equation}
where $\bm{p}_h$ is the solution to \eqref{eq:infp}. We write the
Euler-Lagrange equation to solve the problem \eqref{eq:infu} and the
corresponding variational problem takes the form: find $u_h \in
\wt{V}_h^m$ such that
\begin{equation}
  a_h^{\bm{\mr{u}}}(u_h, v_h) = l_h^{\bm{\mr{u}}}(v_h; \bm{p}_h),
  \quad \forall v_h \in \wt{V}_h^m,
  \label{eq:varu}
\end{equation}
where the bilinear form $a_h^{\bm{\mr{u}}}(\cdot, \cdot)$ is defined
as
\begin{displaymath}
  a_h^{\bm{\mr{u}}}(u_h, v_h) = \sum_{K \in \MTh} \int_K \nabla u_h
  \cdot \nabla v_h \d{x} + \sum_{e \in \MEh^b} \frac{1}{h} \int_e
  u_h v_h \d{s},
\end{displaymath}
and the linear form $l_h^{\bm{\mr{u}}}(\cdot; \cdot)$ is defined as
\begin{displaymath}
  l_h^{\bm{\mr{u}}}(v_h; \bm{p}_h) = \sum_{K \in \MTh} \int_K \nabla
  v_h \cdot \bm{p}_h \d{x} + \sum_{e \in \MEh^b} \frac{1}{h} \int_e
  v_h g \d{s}.
\end{displaymath}
Let us define a natural energy norm $\unorm{\cdot}$ from the bilinear
form $a_h^{\bmr{u}}(\cdot, \cdot)$:
\begin{displaymath}
  \unorm{v}^2 := \sum_{K \in \MTh} \| \nabla v \|_{L^2(K)}^2 + \sum_{e
  \in \MEh^b} \frac{1}{h} \|v\|_{L^2(e)}^2,
\end{displaymath}
for any $v \in H^1(\Omega)$. Note that $\unorm{v}^2 =
a_h^{\bm{\mr{u}}}(v, v)$ for any $v \in H^1(\Omega)$. Indeed we only
need to prove that $\unorm{\cdot}$ is actually a norm on the space
$H^1(\Omega)$ and the existence and uniqueness of the solution to
\eqref{eq:varu} are then the direct consequences.
\begin{lemma}
  $\unorm{\cdot}$ is a norm on the space $H^1(\Omega)$.
  \label{le:uisanorm}
\end{lemma}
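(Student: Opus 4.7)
The plan is to verify the three defining properties of a norm — nonnegativity plus definiteness, absolute homogeneity, and the triangle inequality — on the space $H^1(\Omega)$. First I would note that $\unorm{v}$ is well-defined and finite for every $v \in H^1(\Omega)$: the elementwise term $\sum_K \|\nabla v\|_{L^2(K)}^2$ coincides with $\|\nabla v\|_{L^2(\Omega)}^2$ for $v \in H^1(\Omega)$, and the boundary term is controlled by the trace inequality $\|v\|_{L^2(\partial\Omega)} \leq C \|v\|_{H^1(\Omega)}$.

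Homogeneity $\unorm{\alpha v} = |\alpha|\, \unorm{v}$ is immediate from the scaling of each summand. The triangle inequality follows by viewing $\unorm{v}$ as the Euclidean norm of the vector obtained by concatenating the $L^2$-norms $\|\nabla v\|_{L^2(K)}$ and the weighted traces $h^{-1/2}\|v\|_{L^2(e)}$, and then applying the Minkowski inequality componentwise together with the Minkowski inequality in $\mathbb{R}^{|\MTh|+|\MEh^b|}$.

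The only substantive step is definiteness, which is where I would spend the attention. Suppose $\unorm{v} = 0$ for some $v \in H^1(\Omega)$. Then each summand vanishes, so $\nabla v = 0$ almost everywhere on every $K \in \MTh$, and therefore $\nabla v = 0$ almost everywhere in $\Omega$. Since $\Omega$ is a bounded convex domain, it is connected, so $v$ is (almost everywhere) equal to a constant $c$. On the other hand, the boundary summand forces $\|v\|_{L^2(e)} = 0$ for every $e \in \MEh^b$, hence $v|_{\partial\Omega} = 0$ in the trace sense. Since the constant function $c$ has trace $c$ on $\partial\Omega$, we conclude $c = 0$ and therefore $v \equiv 0$ in $H^1(\Omega)$.

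I do not expect any genuine obstacle here: the argument is essentially the standard Poincaré-type reasoning that a vanishing gradient on a connected domain plus a vanishing boundary trace forces the function to be zero. The mild point to be careful about is justifying that the boundary trace term really pins down the constant, which relies on $\MEh^b$ covering all of $\partial\Omega$ — this is true by the definition of the subdivision $\MTh$ of $\Omega$. Existence and uniqueness of the solution to \eqref{eq:varu} then follow immediately from the Lax–Milgram lemma, since $\unorm{v}^2 = a_h^{\bm{\mr{u}}}(v,v)$ gives coercivity with constant $1$ and boundedness of $a_h^{\bm{\mr{u}}}$ with respect to $\unorm{\cdot}$ is an immediate Cauchy–Schwarz estimate.
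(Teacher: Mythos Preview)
Your proposal is correct and follows essentially the same approach as the paper: the paper also reduces the argument to definiteness, observing that $\unorm{v}=0$ forces $\nabla v=0$ in $\Omega$ and $v=0$ on $\partial\Omega$, hence $v=0$. You simply spell out homogeneity, the triangle inequality, and the connectedness step more carefully than the paper does.
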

\begin{proof}
  It is sufficient to prove that $\unorm{v} = 0$ indicates $v = 0$. If
  $\unorm{v} = 0$ for some $v \in H^1(\Omega)$, we have that 
  \begin{displaymath}
    \nabla v = 0 \quad \text{in } \Omega, \quad \text{and} \quad v = 0
    \quad \text{on } \partial \Omega,
  \end{displaymath}
  which gives us that $v = 0$ on the whole domain and completes the
  proof.
\end{proof}

With respect to the energy norm $\unorm{\cdot}$, we have the
following error estimate.
\begin{theorem}
  Let $u \in H^{m+1}(\Omega)$ be the solution to \eqref{eq:upnondiv}
  and let $u_h \in \wt{V}_h^m$ be the solution to \eqref{eq:varu},
  then the following estimate holds:
  \begin{equation}
    \unorm{u - u_h} \leq C \left( h^m \|u\|_{H^{m+1}(\Omega)} + \|
    \bm{p} - \bm{p}_h\|_{L^2(\Omega)} \right),
    \label{eq:dguerror}
  \end{equation}
  where $\bm{p}_h$ is the solution to \eqref{eq:varp}.
  \label{th:dguerror}
\end{theorem}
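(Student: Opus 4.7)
The plan is to reduce \eqref{eq:dguerror} to a Céa-type estimate by first establishing a modified Galerkin-orthogonality relation that captures the perturbation caused by replacing $\bm{p}$ with $\bm{p}_h$. Since the exact solution $u$ satisfies $\nabla u = \bm{p}$ in $\Omega$ and $u = g$ on $\partial\Omega$, integrating against any $v \in H^1(\Omega)$ gives
\begin{displaymath}
  a_h^{\bm{\mr{u}}}(u, v) = \sum_{K \in \MTh}\int_K \bm{p} \cdot \nabla v\,\d{x} + \sum_{e \in \MEh^b}\frac{1}{h}\int_e g v\,\d{s}.
\end{displaymath}
Subtracting \eqref{eq:varu} immediately yields, for all $v_h \in \wt{V}_h^m$,
\begin{displaymath}
  a_h^{\bm{\mr{u}}}(u - u_h, v_h) = \sum_{K \in \MTh}\int_K (\bm{p} - \bm{p}_h)\cdot \nabla v_h\,\d{x}.
\end{displaymath}

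Next, I would exploit the identity $\unorm{w}^2 = a_h^{\bm{\mr{u}}}(w,w)$, which makes coercivity free. For an arbitrary $v_h \in \wt{V}_h^m$, writing $u_h - v_h = (u - v_h) - (u - u_h)$ and applying the perturbed orthogonality above gives
\begin{displaymath}
  \unorm{u_h - v_h}^2 = a_h^{\bm{\mr{u}}}(u - v_h, u_h - v_h) - \sum_{K \in \MTh}\int_K (\bm{p} - \bm{p}_h)\cdot \nabla(u_h - v_h)\,\d{x}.
\end{displaymath}
Bounding the first term by Cauchy--Schwarz in the $\unorm{\cdot}$ norm, the second by the standard $L^2(\Omega)$ Cauchy--Schwarz, and then dividing by $\unorm{u_h - v_h}$, I obtain
\begin{displaymath}
  \unorm{u_h - v_h} \leq \unorm{u - v_h} + \|\bm{p} - \bm{p}_h\|_{L^2(\Omega)},
\end{displaymath}
and the triangle inequality yields $\unorm{u - u_h} \leq 2\unorm{u - v_h} + \|\bm{p} - \bm{p}_h\|_{L^2(\Omega)}$.

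The final step is to choose $v_h$ to be the standard Lagrange interpolant of $u$ in $\wt{V}_h^m$, for which classical estimates give $\|\nabla(u - v_h)\|_{L^2(K)} \leq C h_K^m |u|_{H^{m+1}(K)}$ on each element $K$. The only mildly delicate point, and really the only thing worth watching in the argument, is the boundary contribution $\sum_{e \in \MEh^b} h^{-1}\|u - v_h\|_{L^2(e)}^2$ appearing in $\unorm{u - v_h}$; I would handle it with the scaled trace inequality $\|u - v_h\|_{L^2(e)}^2 \leq C (h_K^{-1}\|u - v_h\|_{L^2(K)}^2 + h_K\|\nabla(u - v_h)\|_{L^2(K)}^2)$ combined with the interpolation estimates, which together produce a bound of order $h^{2m}\|u\|_{H^{m+1}(\Omega)}^2$. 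Summing yields $\unorm{u - v_h} \leq C h^m \|u\|_{H^{m+1}(\Omega)}$, and inserting this into the triangle inequality above gives exactly \eqref{eq:dguerror}.
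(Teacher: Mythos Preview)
Your proof is correct, but it takes a different route from the paper's own argument. You work through the variational characterization: you derive the perturbed Galerkin orthogonality $a_h^{\bm{\mr{u}}}(u-u_h,v_h)=(\bm p-\bm p_h,\nabla v_h)_{L^2(\Omega)}$ and then run a C\'ea-type argument using the identity $\unorm{w}^2=a_h^{\bm{\mr u}}(w,w)$. The paper instead exploits the minimization characterization directly: since $\nabla u=\bm p$ and $u=g$ on $\partial\Omega$, one has $\unorm{u-u_h}^2\leq C\bigl(J_h^{\bm{\mr u}}(u_h;\bm p_h)+\|\bm p-\bm p_h\|_{L^2(\Omega)}^2\bigr)$, and because $u_h$ is the minimizer of $J_h^{\bm{\mr u}}(\cdot;\bm p_h)$ over $\wt V_h^m$, this functional value can be replaced by $J_h^{\bm{\mr u}}(u_I;\bm p_h)$, which is then bounded by $\unorm{u-u_I}^2+\|\bm p-\bm p_h\|_{L^2(\Omega)}^2$. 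Both arguments finish with the same interpolation estimate (and the paper handles the boundary term exactly as you suggest, via the trace inequality). Your orthogonality relation is in fact the identity the paper invokes in the \emph{next} theorem (the $L^2$ duality estimate), so your approach has the minor advantage of setting up that tool early; the paper's approach has the virtue of making the least-squares structure do all the work without any algebra.
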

\begin{proof}
  Let $u_I \in \wt{V}_h^m$ be the interpolant of $u$ and we deduce
  that 
  \begin{displaymath}
    \begin{aligned}
      \unorm{u - u_h}^2 & = \sum_{K \in \MTh} \|\nabla u - \nabla u_h
      \|_{L^2(K)}^2 + \sum_{e \in \MEh^b} \frac{1}{h} \| u - u_h
      \|_{L^2(e)}^2 \\
      & \leq C \left(  \sum_{K \in \MTh} \|\nabla u - \bm{p}_h
      \|_{L^2(K)}^2 + \sum_{e \in \MEh^b} \frac{1}{h} \| u - u_h
      \|_{L^2(e)}^2 + \|\bm{p} - \bm{p}_h \|_{L^2(\Omega)} \right) \\
      & \leq C \left( J_h^{\bmr{u}}(u_h; \bm{p}_h) + \|\bm{p} -
      \bm{p}_h \|_{L^2(\Omega)} \right) \leq C \left(
      J_h^{\bmr{u}}(u_I; \bm{p}_h) + \|\bm{p} - \bm{p}_h
      \|_{L^2(\Omega)} \right) \\
      & \leq C \left(  \sum_{K \in \MTh} \|\nabla u - \nabla u_I
      \|_{L^2(K)}^2 + \sum_{e \in \MEh^b} \frac{1}{h} \| u - u_I
      \|_{L^2(e)}^2 + \|\bm{p} - \bm{p}_h \|_{L^2(\Omega)} \right) \\
      & \leq C \left( \unorm{u - u_I} +  \|\bm{p} - \bm{p}_h
      \|_{L^2(\Omega)} \right). \\
    \end{aligned}
  \end{displaymath}
  By the trace inequality, it is trivial to obtain 
  \begin{displaymath}
    \unorm{ u - u_I} \leq C h^m \|u\|_{H^{m+1}(\Omega)},
  \end{displaymath}
  which implies \eqref{eq:dguerror} and completes the proof.
\end{proof}
Then we attain an error estimate with respect to the $L^2$-norm.
\begin{theorem}
  Let $u \in H^{m+1}(\Omega)$ be the solution to \eqref{eq:upnondiv}
  and let $u_h \in \wt{V}_h^m$ be the solution to \eqref{eq:varu},
  then the following estimate holds:
  \begin{equation}
    \| u - u_h \|_{L^2(\Omega)} \leq C \left(h^{m+1} \| u
    \|_{H^{m+1}(\Omega)}  + \|\bm{p} - \bm{p}_h \|_{L^2(\Omega)}
    \right),
    \label{eq:L2uerror}
  \end{equation}
  where $\bm{p}_h$ is the solution to \eqref{eq:varp}.
  \label{th:L2uerror}
\end{theorem}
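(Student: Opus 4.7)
The plan is an Aubin--Nitsche duality argument in which the dual problem is chosen to mirror the boundary-penalty structure of $a_h^{\bmr{u}}(\cdot,\cdot)$, so that the Dirichlet-type boundary term does not leak out of the estimate.

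First I would record a Galerkin-type orthogonality. Since $\bm{p}=\nabla u$ in $\Omega$ and $u=g$ on $\partial\Omega$, the exact solution satisfies
\begin{displaymath}
a_h^{\bmr{u}}(u,v_h)=\int_\Omega \nabla v_h\cdot\bm{p}\,\d{x}+\sum_{e\in\MEh^b}\frac{1}{h}\int_e v_h\,g\,\d{s},\qquad \forall v_h\in\wt{V}_h^m,
\end{displaymath}
and subtracting \eqref{eq:varu} yields
\begin{displaymath}
a_h^{\bmr{u}}(u-u_h,v_h)=\int_\Omega \nabla v_h\cdot(\bm{p}-\bm{p}_h)\,\d{x},\qquad \forall v_h\in\wt{V}_h^m.
\end{displaymath}

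Next I would introduce the dual problem dictated by $a_h^{\bmr{u}}(\cdot,\cdot)$: find $w\in H^1(\Omega)$ such that $a_h^{\bmr{u}}(v,w)=(v,u-u_h)_{L^2(\Omega)}$ for every $v\in H^1(\Omega)$. This is the Robin problem $-\Delta w=u-u_h$ in $\Omega$ together with $\nabla w\cdot\un+\frac{1}{h}w=0$ on $\partial\Omega$, and on the convex domain $\Omega$ I would invoke the regularity bound $\|w\|_{H^2(\Omega)}\leq C\|u-u_h\|_{L^2(\Omega)}$ with $C$ independent of $h$. Testing with $v=u-u_h$, inserting the Lagrange interpolant $w_I\in\wt{V}_h^m$, and applying the orthogonality above to $v_h=w_I$, one obtains
\begin{displaymath}
\|u-u_h\|_{L^2(\Omega)}^2=a_h^{\bmr{u}}(u-u_h,w-w_I)+\int_\Omega \nabla w_I\cdot(\bm{p}-\bm{p}_h)\,\d{x}.
\end{displaymath}
Standard nodal interpolation, together with the trace inequality applied to the boundary part of $\unorm{\cdot}$, gives $\unorm{w-w_I}\leq Ch\|w\|_{H^2(\Omega)}$; the boundedness of $a_h^{\bmr{u}}(\cdot,\cdot)$ then bounds the first term by $Ch\unorm{u-u_h}\|u-u_h\|_{L^2(\Omega)}$, which via Theorem \ref{th:dguerror} becomes $C(h^{m+1}\|u\|_{H^{m+1}(\Omega)}+h\|\bm{p}-\bm{p}_h\|_{L^2(\Omega)})\|u-u_h\|_{L^2(\Omega)}$. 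The second term is controlled by $\|\nabla w_I\|_{L^2(\Omega)}\|\bm{p}-\bm{p}_h\|_{L^2(\Omega)}\leq C\|u-u_h\|_{L^2(\Omega)}\|\bm{p}-\bm{p}_h\|_{L^2(\Omega)}$. Cancelling a factor of $\|u-u_h\|_{L^2(\Omega)}$ produces \eqref{eq:L2uerror}.

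The delicate point is the $h$-uniform $H^2$-regularity of the dual Robin problem, since the Robin coefficient $1/h$ blows up as $h\to 0$ and the standard convex-domain theory for a fixed Robin parameter does not apply verbatim. A workable remedy is to split $w=w_D+r$, where $w_D\in H^1_0(\Omega)$ solves the Dirichlet problem $-\Delta w_D=u-u_h$ (enjoying the $h$-independent convex-domain bound $\|w_D\|_{H^2(\Omega)}\leq C\|u-u_h\|_{L^2(\Omega)}$), and to estimate the corrector $r$ by an energy argument in $a_h^{\bmr{u}}(\cdot,\cdot)$ that exploits the vanishing boundary trace of $w_D$. All other ingredients -- polynomial interpolation, trace inequality, and the boundedness of $a_h^{\bmr{u}}(\cdot,\cdot)$ -- are already available from Section \ref{sec:dlsm}.
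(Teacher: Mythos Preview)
Your route differs from the paper's. You tailor the dual problem to $a_h^{\bmr{u}}$ via a Robin condition $\nabla w\cdot\un+h^{-1}w=0$; the paper instead takes the homogeneous Dirichlet dual $-\Delta w=u-u_h$, $w\in H^2(\Omega)\cap H_0^1(\Omega)$, accepts the resulting boundary mismatch $-(u-u_h,\nabla w\cdot\un)_{L^2(\partial\Omega)}$, and controls it by a separate estimate of $\|u-u_h\|_{H^{-1/2}(\partial\Omega)}$ obtained through a harmonic-extension duality. The Dirichlet choice delivers $h$-independent $H^2$ regularity for free, which is precisely what your Robin dual lacks.

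The gap in your argument is exactly where you flag it, and the proposed remedy does not close it. With $w=w_D+r$, $-\Delta w_D=u-u_h$, $w_D\in H_0^1(\Omega)$, the harmonic corrector satisfies $\nabla r\cdot\un+h^{-1}r=-\nabla w_D\cdot\un$ on $\partial\Omega$. Testing with $r$ yields
\begin{displaymath}
\|\nabla r\|_{L^2(\Omega)}^2+h^{-1}\|r\|_{L^2(\partial\Omega)}^2\leq \|\nabla w_D\cdot\un\|_{L^2(\partial\Omega)}\,\|r\|_{L^2(\partial\Omega)},
\end{displaymath}
from which one extracts only $\unorm{r}\leq C h^{1/2}\|w_D\|_{H^2(\Omega)}\leq C h^{1/2}\|u-u_h\|_{L^2(\Omega)}$. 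An energy argument gives no $H^2$ control of $r$, so you cannot interpolate it; absorbing $r$ directly (taking $w_I=(w_D)_I$) leaves $a_h^{\bmr{u}}(u-u_h,r)\leq \unorm{u-u_h}\,\unorm{r}\leq C h^{1/2}\unorm{u-u_h}\,\|u-u_h\|_{L^2(\Omega)}$, and after cancelling $\|u-u_h\|_{L^2(\Omega)}$ you obtain only $O(h^{m+1/2})$ instead of $O(h^{m+1})$. To salvage the Robin route you would need a Grisvard--Kadlec type identity showing that the $H^2$ constant for the Robin problem on a convex polytope is uniform as the Robin parameter tends to infinity; that is not an energy statement and is not supplied by your splitting.
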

\begin{proof}
  Let $e_h = u - u_h$ and by the direct calculation we could see that
  \begin{displaymath}
    a_h^{\bm{\mr{u}}}(e_h, v_h) = (\bm{p} - \bm{p}_h, \nabla
    v_h)_{L^2(\Omega)}, \quad \forall v_h \in \wt{V}_h^m.
  \end{displaymath}
  We take $\psi = e_h$ and we let $w \in H^2(\Omega) \cap
  H^1_0(\Omega)$ be the unique solution to the problem $ -\Delta w =
  \psi$.  We denote $w_I \in \wt{V}_h^m$ as the linear interpolant of
  $w$. One can observe that
  \begin{displaymath}
    \begin{aligned}
      (e_h, \psi)_{L^2(\Omega)} & = (\nabla e_h, \nabla
      w)_{L^2(\Omega)} - \left( e_h, \frac{\partial w}{\partial \un}
      \right)_{L^2(\partial \Omega)}\\
      &= a_h^{\bm{\mr{u}}}(e_h, w - w_I) + (\bm{p} - \bm{p}_h, \nabla
      w_I) -  \left( e_h, \frac{\partial w}{\partial \un}
      \right)_{L^2(\partial \Omega)} \\ 
      & \leq C \unorm{e_h} \unorm{w - w_I} + \|\bm{p} -
      \bm{p}_h\|_{L^2(\Omega)} \|\nabla w_I\|_{L^2(\Omega)} +
      \|e_h\|_{H^{-1/2}(\partial \Omega)} \left\| \frac{\partial
      w}{\partial \un} \right\|_{H^{3/2}(\partial \Omega)} \\
      &\leq C \left( h^{m+1} \|u\|_{H^{m+1}(\Omega)} + \|\bm{p} -
      \bm{p}_h\|_{L^2(\Omega)} + \|e_h\|_{H^{-1/2}(\partial \Omega)}
      \right) \|w\|_{H^2(\Omega)}.
    \end{aligned}
  \end{displaymath}
  Together with the regularity estimate $\|w\|_{H^2(\Omega)} \leq C
  \| \psi \|_{L^2(\Omega)}$, we immediately get 
  \begin{equation}
    \|e_h\|_{L^2(\Omega)} \leq C  \left( h^{m+1}
    \|u\|_{H^{m+1}(\Omega)} + \|\bm{p} - \bm{p}_h\|_{L^2(\Omega)} +
    \|e_h\|_{H^{-1/2}(\partial \Omega)} \right).
    \label{eq:ehL2upper}
  \end{equation}
  We end the proof by giving a bound for the term $\|e_h
  \|_{H^{-1/2}(\partial \Omega)}$, which is defined by
  \begin{displaymath}
    \|e_h \|_{H^{-1/2}(\partial \Omega)} = \sup_{\psi \in
    H^{1/2}(\partial \Omega)} \frac{(e_h, \psi)_{L^2(\partial \Omega)
    }}{ \| \psi\|_{H^{1/2}(\partial \Omega)}}.
  \end{displaymath}
  For any $\tau \in H^{1/2}(\partial \Omega)$, we let $\alpha \in
  H^1(\Omega)$ solve the problem 
  \begin{displaymath}
    - \Delta \alpha = 0, \quad \text{in } \Omega, \quad \alpha = \tau,
    \quad \text{on } \partial \Omega.
  \end{displaymath}
  We denote by $\alpha_I \in \wt{V}_h^m$ the interpolant of $\alpha$,
  then we could obtain that
  \begin{displaymath}
    \begin{aligned}
      (e_h, \tau)_{L^2(\partial \Omega)} & = h\left( \sum_{e \in
      \MEh^b} \frac{1}{h} \int_e e_h \alpha \d{s} \right) \\
      & = h \left( \sum_{e \in \MEh^b} \frac{1}{h} \int_e e_h \alpha
      \d{s} - a_h^{\bmr{u}}(e_h, \alpha_I)  \right) + h(\bm{p} -
      \bm{p}_h, \nabla \alpha_I)_{L^2(\Omega)} \\
      & = h \left( \sum_{e \in \MEh^b}\frac{1}{h} \int_e e_h ( \alpha
      - \alpha_I)  \d{s} - \sum_{K \in \MTh} \int_K \nabla e_h \cdot
      \nabla \alpha_I \d{x} \right)+ h(\bm{p} - \bm{p}_h, \nabla
      \alpha_I)_{L^2(\Omega)}. \\
    \end{aligned}
  \end{displaymath}
  Applying the Cauchy-Schwarz inequality and the approximation result
  of $\alpha_I$, we obtain that 
  \begin{displaymath}
    \begin{aligned}
      (e_h, \tau)_{L^2(\partial \Omega)} & \leq Ch \unorm{e_h} \left(
      \|\alpha - \alpha_I\|_{L^2(\partial \Omega)} +
      \|\nabla \alpha_I\|_{L^2(\Omega)}  \right) + C h\|\bm{p} -
      \bm{p}_h \|_{L^2(\Omega)} \|\nabla \alpha_I\|_{L^2(\Omega)} \\
      & \leq Ch\left( \unorm{e_h} + \|\bm{p} - \bm{p}_h
      \|_{L^2(\Omega)} \right) \|\alpha \|_{H^1(\Omega)}. \\
    \end{aligned}
  \end{displaymath}
  Together with the regularity inequality $\|\alpha \|_{H^1(\Omega)}
  \leq C \|\tau \|_{H^{1/2}(\partial \Omega)}$, we give a bound of
  $ \|e_h\|_{H^{-1/2}(\partial \Omega)}$,
  \begin{displaymath}
    \|e_h\|_{H^{-1/2}(\partial \Omega)} \leq Ch\left( \unorm{e_h} +
    \|\bm{p} - \bm{p}_h \|_{L^2(\Omega)} \right).
  \end{displaymath}
  Combining \eqref{eq:dgerror} and \eqref{eq:ehL2upper} implies the
  estimate \eqref{eq:L2uerror} and completes the proof.
\end{proof}
\begin{remark}
  The optimal convergence order of $\| u - u_h\|_{L^2(\Omega)}$
  depends on the convergence order of the term
  $ \| \bm{p} - \bm{p}_h\|_{L^2(\Omega)}$. We can only prove a
  suboptimal $L^2$ convergence rate for the variable
  $\bm{p}$. However, the numerical results in next section demonstrate
  our proposed method produces an approximation for $\bm{p}$ with an
  optimal $L^2$ convergence rate.  Actually, when one degree higher
  polynomials are employed to approximate $\bm{p}$, it is clear that
  the error $\| u - u_h\|_{L^2(\Omega)}$ will converge optimally
  from Theorem \ref{th:error} and Theorem \ref{th:L2uerror}.
\end{remark}

Since the solution to problem \eqref{eq:nondiv} may be of low
regularity, we note that the least squares functional we try to
minimize can automatically serves as an {\it a posteriori} error
estimator. Precisely, we define the element estimator
$\eta_K(\bm{p}_h)$ as
\begin{equation}
  \begin{aligned}
    \eta_K^2(\bm{p}_h) := \| A: \nabla \bm{p}_h - f\|_{L^2(K)}^2 + \|
    h_e^{-1/2} \jump{\bm{p}_h \otimes \un} \|_{L^2(\partial K \cap
    \MEh^i)}^2  + \| h_e^{-1/2}  ( \bm{p}_h \times \un)
    \|_{L^2(\partial K \cap \MEh^b)}^2.
  \end{aligned}
  \label{eq:etadef}
\end{equation}
We adopt the longest-edge bisection algorithm to avoid the hanging
nodes.  To close this section, we outline the following adaptive
algorithm:
\begin{itemize}
  \item[\bf{Initialize}] Given the initial mesh $\mc{T}_0$ and a
    parameter $0 < \theta < 1$. Set $l=0$.
  \item[\bf{Solve}] Solve and obtain the numerical solution with
    respect to the mesh $\mc{T}_l$.
  \item[\bf{Estimate}] Compute the error estimator $\eta_K$ on all
    elements in $\mc{T}_l$.
  \item[\bf{Mark}] Construct the minimal subset $\mc{M} \subset
    \mc{T}_l$ such that $\theta \sum_{K \in \mc{T}_l} \eta_K^2 \leq
    \sum_{K \in \mc{M}} \eta_K^2$ and mark all elements in $\mc{M}$.
  \item[\bf{Refine}] Refine all elements in $\mc{M}$ and generate a
    conforming mesh $\mc{T}_{l+1}$ from $\mc{T}_l$. Set $l = l + 1$
    and repeat the loop.
\end{itemize}


\section{Numerical Results}
\label{sec:numericalresults}
In this section, we carry out a series of numerical experiments to
demonstrate the convergence rates predicted by theoretical analysis in
section \ref{sec:dlsm}. In all cases, the parameter $\eta$ in the
bilinear form $a_h^{\bm{\mr{p}}}(\cdot, \cdot)$ is taken as $10$. 

\begin{figure}
  \centering
  \includegraphics[width=0.4\textwidth]{./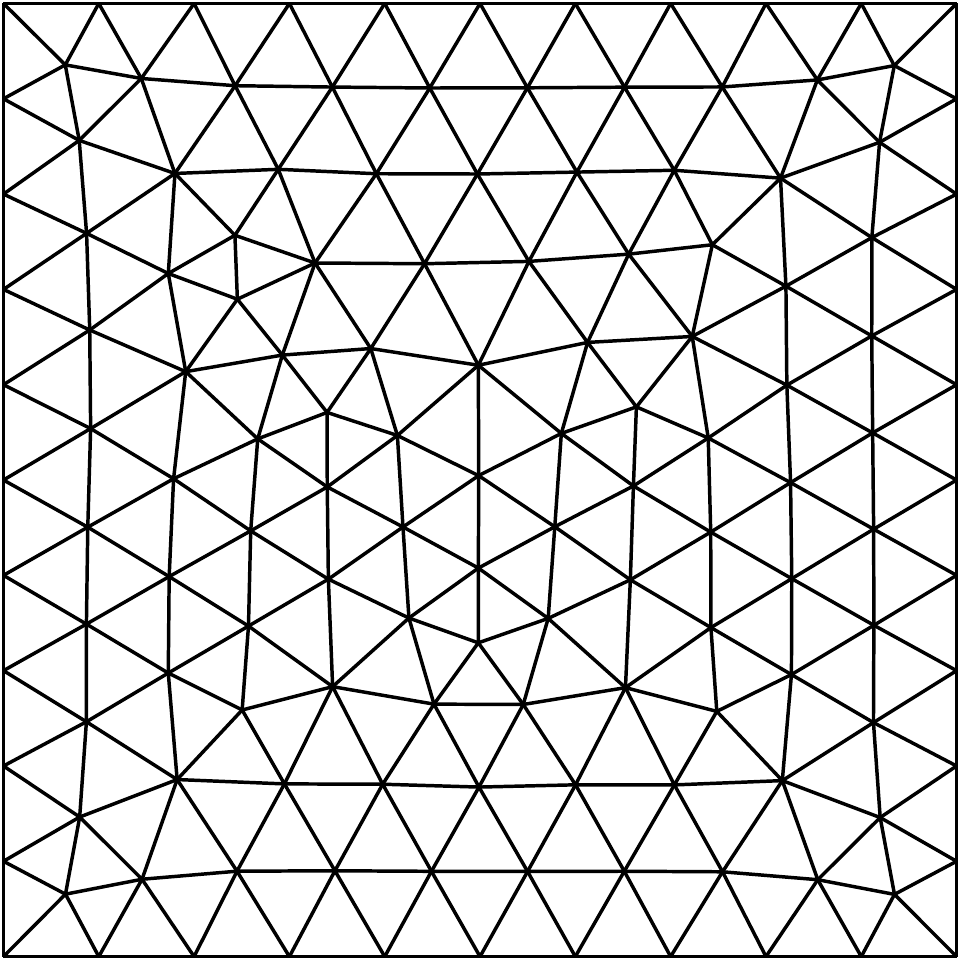}
  \hspace{25pt}
  \includegraphics[width=0.4\textwidth]{./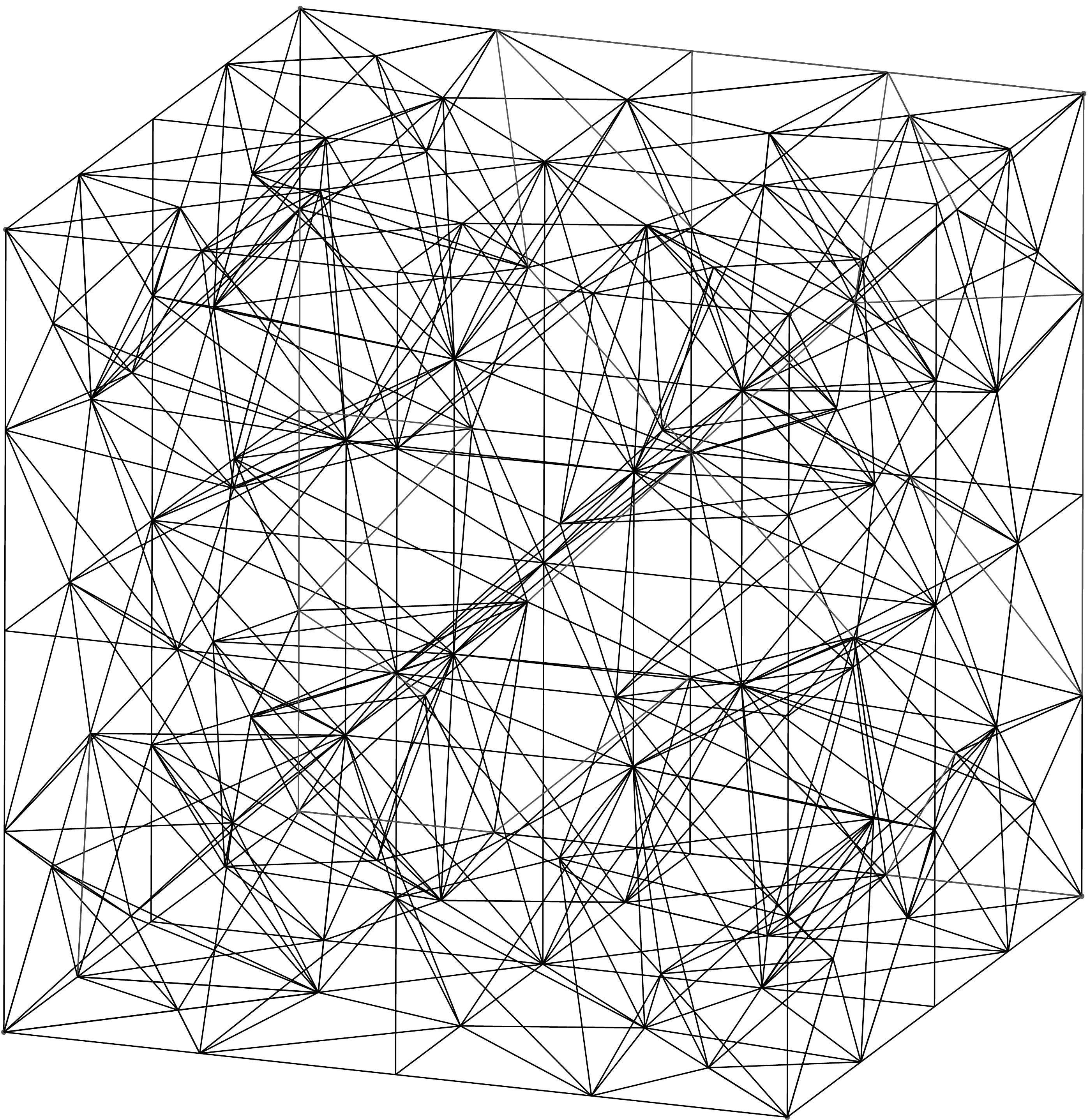}
  \caption{2d triangular partition (left) / 3d tetrahedral partition
  (right).}
  \label{fig:partition}
\end{figure}

\noindent \textbf{Example 1.} In the first example, we consider a
smooth problem in two dimensions. On the domain $\Omega = (-1, 1)^2$,
we select the exact solution $u(x, y)$ and the smooth coefficient
$A(x, y)$ as
\begin{displaymath}
  u(x, y) = xy\sin(2\pi x) \sin(3 \pi y), \quad (x, y) \text{ in }
  \Omega,
\end{displaymath}
and 
\begin{displaymath}
  A(x, y) = \begin{bmatrix}
    | \sin(4\pi x)|^{1/5} + 1 & \cos(2xy \pi) \\
    \cos(2xy \pi) &  | \sin(4\pi y)|^{1/5} + 1 \\
  \end{bmatrix}.
\end{displaymath}
The source term and boundary condition are taken accordingly. We solve
this problem on a sequence of triangular meshes with mesh size $h =
1/10, 1/20, \cdots, 1/160$, see Fig~\ref{fig:partition} for the
coarsest mesh.  We employ the finite element spaces $\bmr{S}_h^m \times
\wt{V}_h^m$ with $1 \leq m \leq 3$ to seek numerical solutions
$(\bm{p}_h, u_h)$ for approximating $(\bm{p}, u)$ in
\eqref{eq:upnondiv}. For the gradient, we plot the errors
$\pnorm{\bm{p} - \bm{p}_h}$ and $\| \bm{p} - \bm{p}_h\|_{L^2(\Omega)}$
in Fig~\ref{fig:ex1perror}. For fixed $m$, it is clear that the error
$\pnorm{\bm{p} - \bm{p}_h}$ converges to zero with the rate $O(h^m)$
and error $\| \bm{p} - \bm{p}_h\|_{L^2(\Omega)}$ converges to zero
with the rate $O(h^{m+1})$ as the mesh size decreases to zero. All
convergence rates are optimal and coincide with the Theorem
\ref{th:error}. For $u$, we plot the numerical errors in both $L^2$
norm and energy norm in Fig~\ref{fig:ex1uerror}. We also attain the
optimal convergence rates $O(h^m)$ and $O(h^{m+1})$ for the errors
$\unorm{u - u_h}$ and $\| u - u_h\|_{L^2(\Omega)}$, respectively. We
note that all the convergence rates perfectly agree with the error
estimates.


\begin{figure}[htb]
  \centering
  \includegraphics[width=0.4\textwidth]{./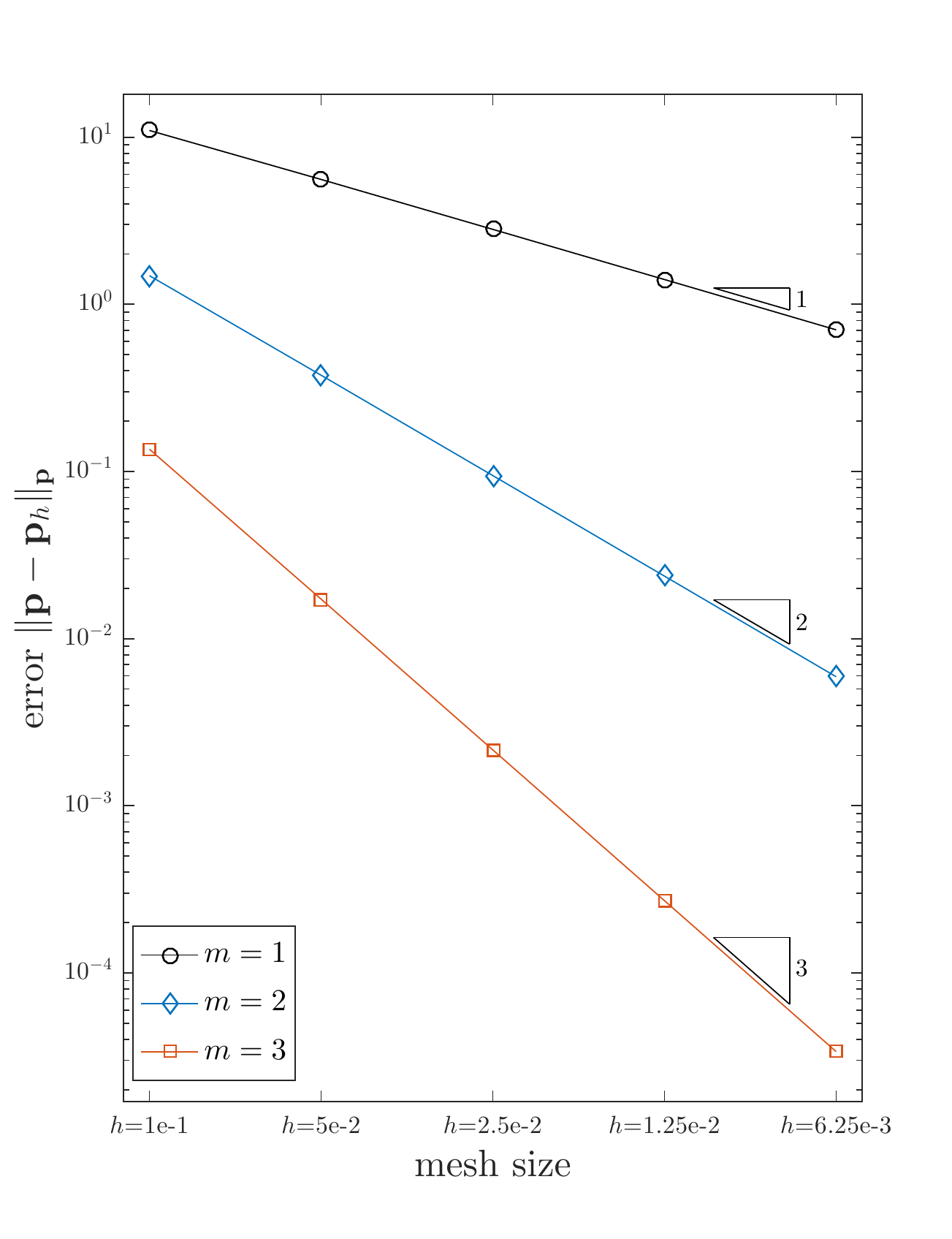}
  \includegraphics[width=0.4\textwidth]{./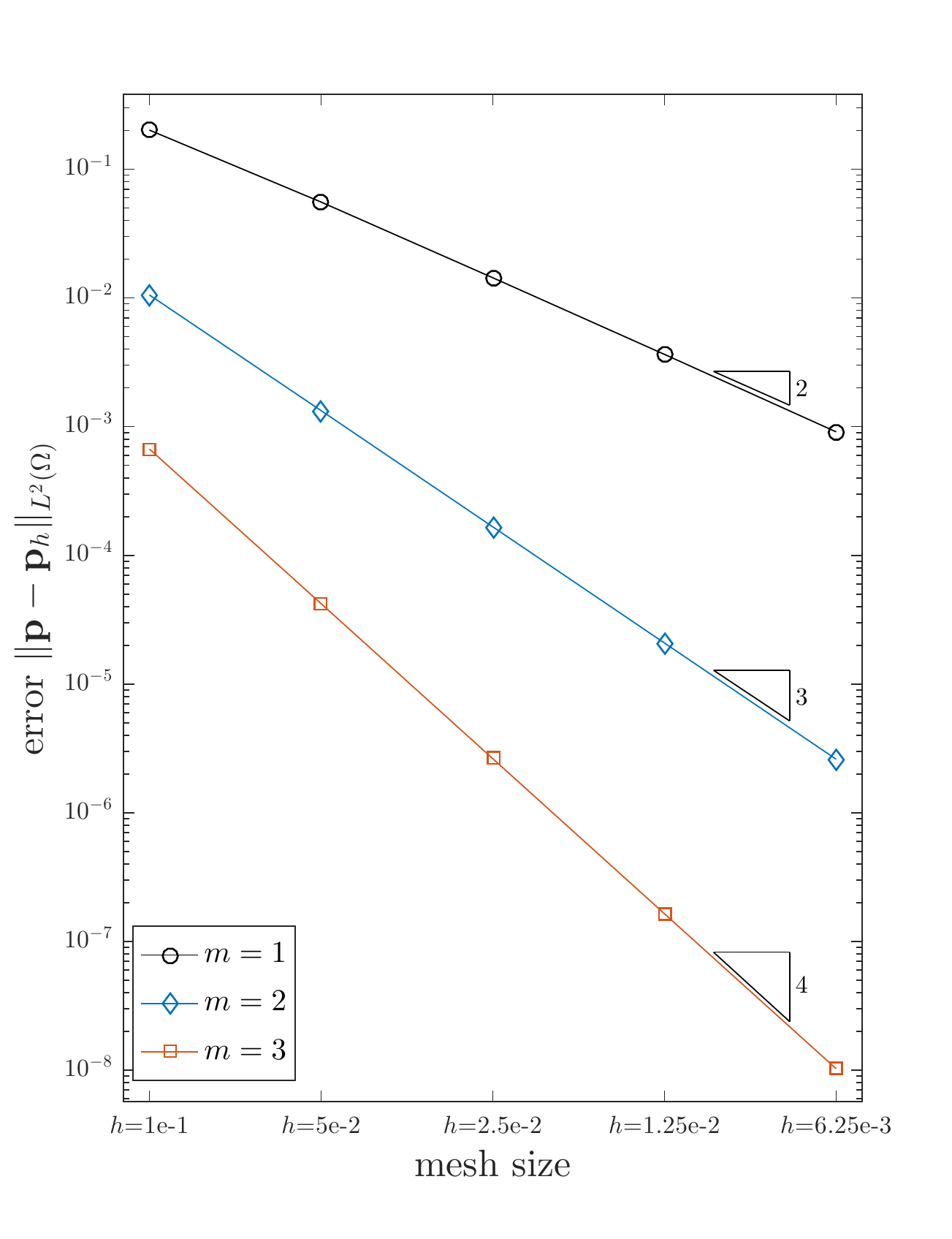}
  \caption{Example 1. The convergence rates of $\pnorm{\bm{p} -
  \bm{p}_h}$ (left) / $\| \bm{p} - \bm{p}_h\|_{L^2(\Omega)}$ (right).
  }
  \label{fig:ex1perror}
\end{figure}

\begin{figure}[htb]
  \centering
  \includegraphics[width=0.4\textwidth]{./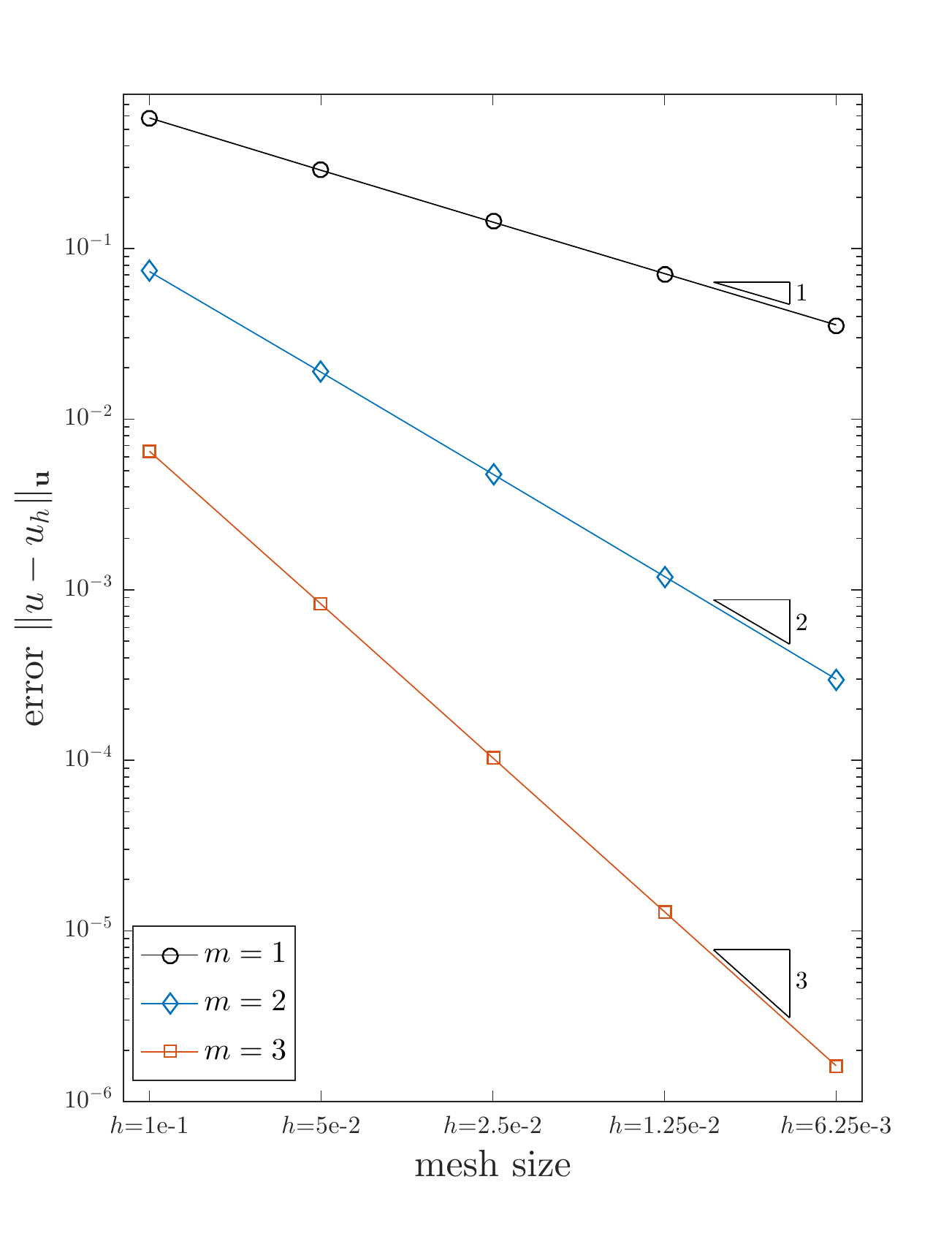}
  \includegraphics[width=0.4\textwidth]{./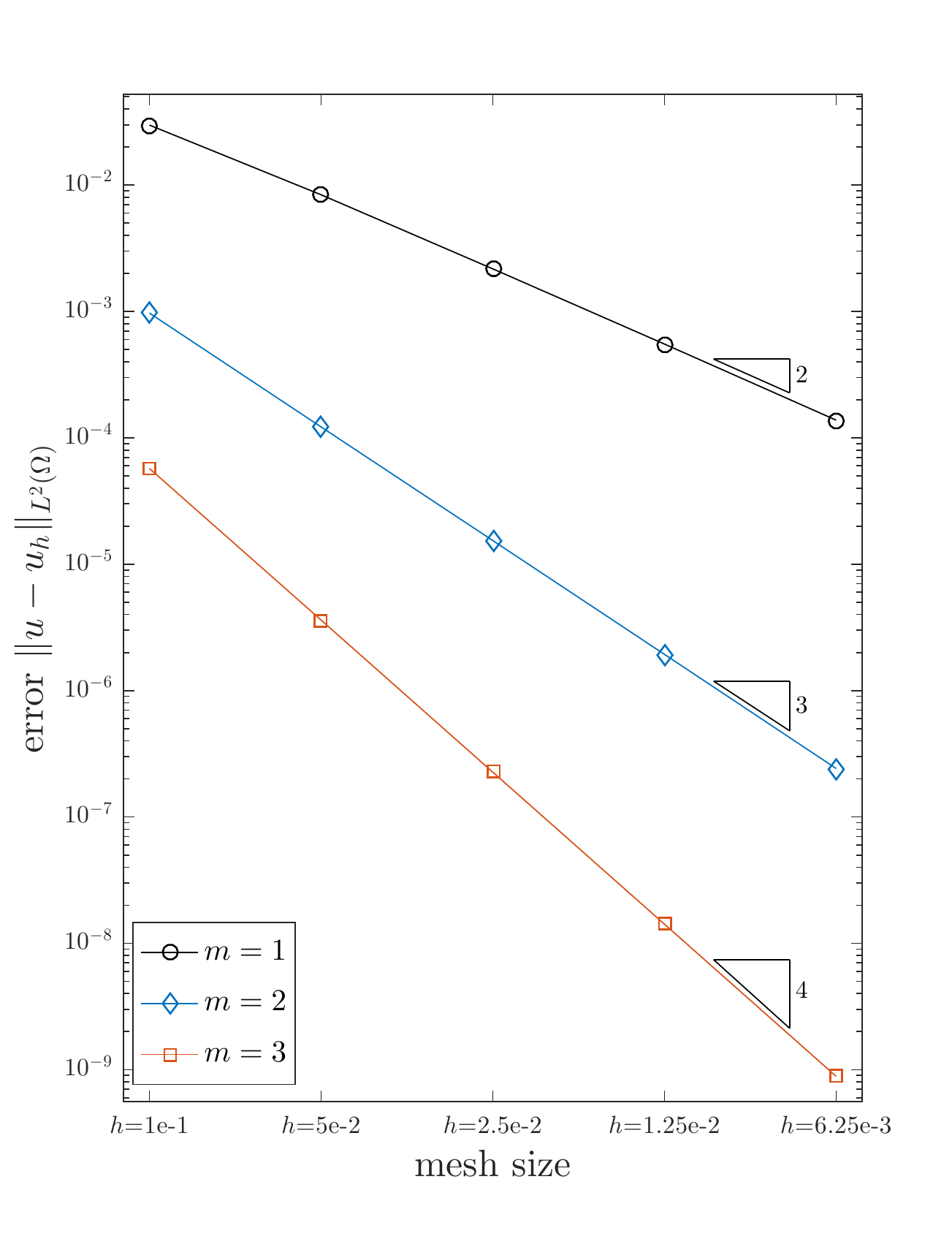}
  \caption{Example 1. The convergence rates of $\unorm{u - u_h}$
  (left) / $\| u - u_h\|_{L^2(\Omega)}$ (right).  }
  \label{fig:ex1uerror}
\end{figure}

\noindent \textbf{Example 2.} In this example, we choose a
discontinuous coefficients $A(x, y)$ which reads
\begin{displaymath}
  A(x, y) = \begin{bmatrix}
    2 & \frac{xy}{|xy|} \\
    \frac{xy}{|xy|} & 2 \\
  \end{bmatrix}.
\end{displaymath}
The exact solution and the triangular meshes and the approximating
spaces are taken as the same as in Example 1. The numerically
convergence rates are displayed in Fig~\ref{fig:ex2perror} and
Fig~\ref{fig:ex2uerror}. Clearly, for both variables $\bm{p}$ and $u$
the rates of convergence in $L^2$ norm and energy norm are $m+1$ and
$m$, respectively, which again are in perfect agreement with
theoretical results.

\begin{figure}[htb]
  \centering
  \includegraphics[width=0.4\textwidth]{./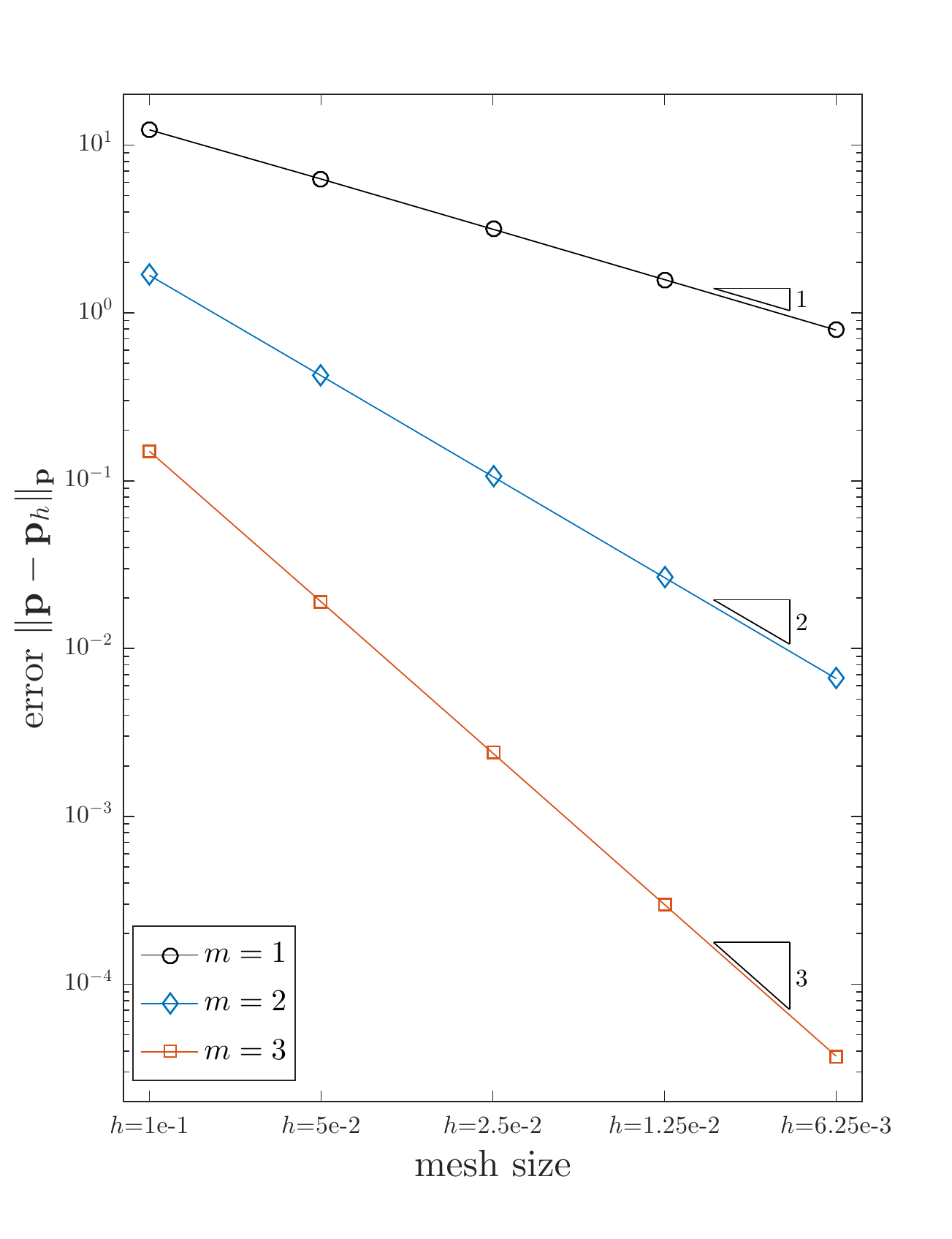}
  \includegraphics[width=0.4\textwidth]{./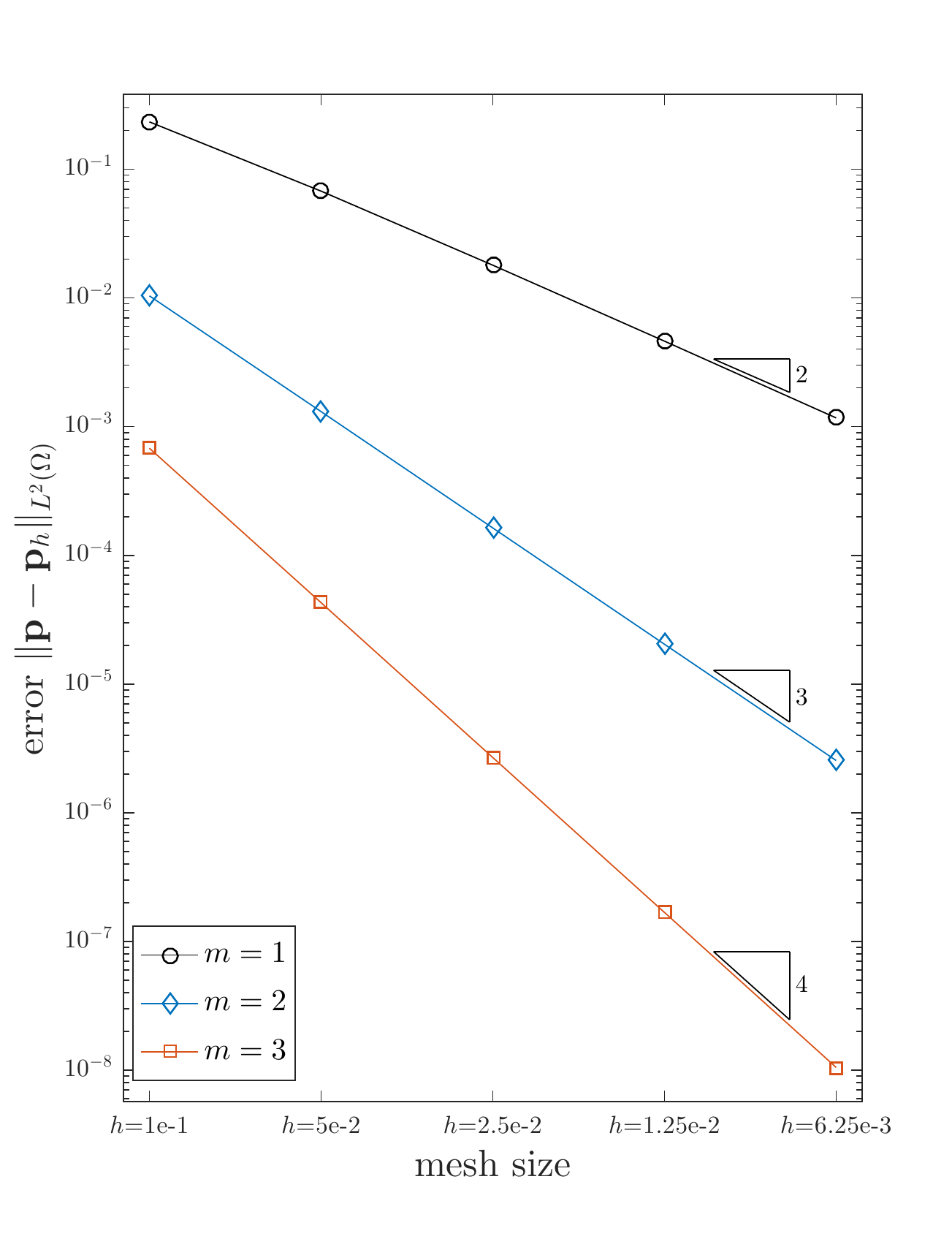}
  \caption{Example 2. The convergence rates of $\pnorm{\bm{p} -
  \bm{p}_h}$ (left) / $\| \bm{p} - \bm{p}_h\|_{L^2(\Omega)}$ (right).
  }
  \label{fig:ex2perror}
\end{figure}

\begin{figure}[htb]
  \centering
  \includegraphics[width=0.4\textwidth]{./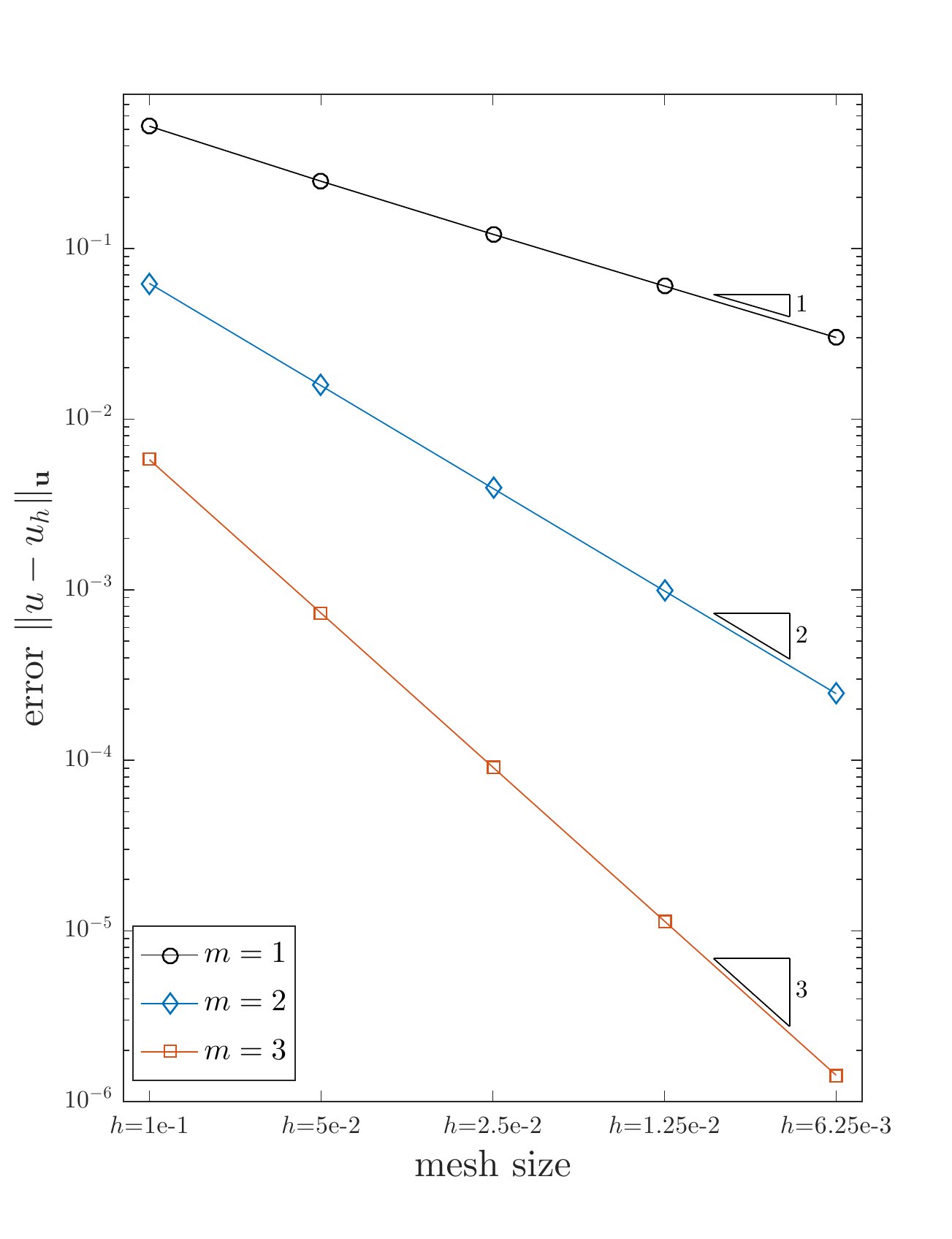}
  \includegraphics[width=0.4\textwidth]{./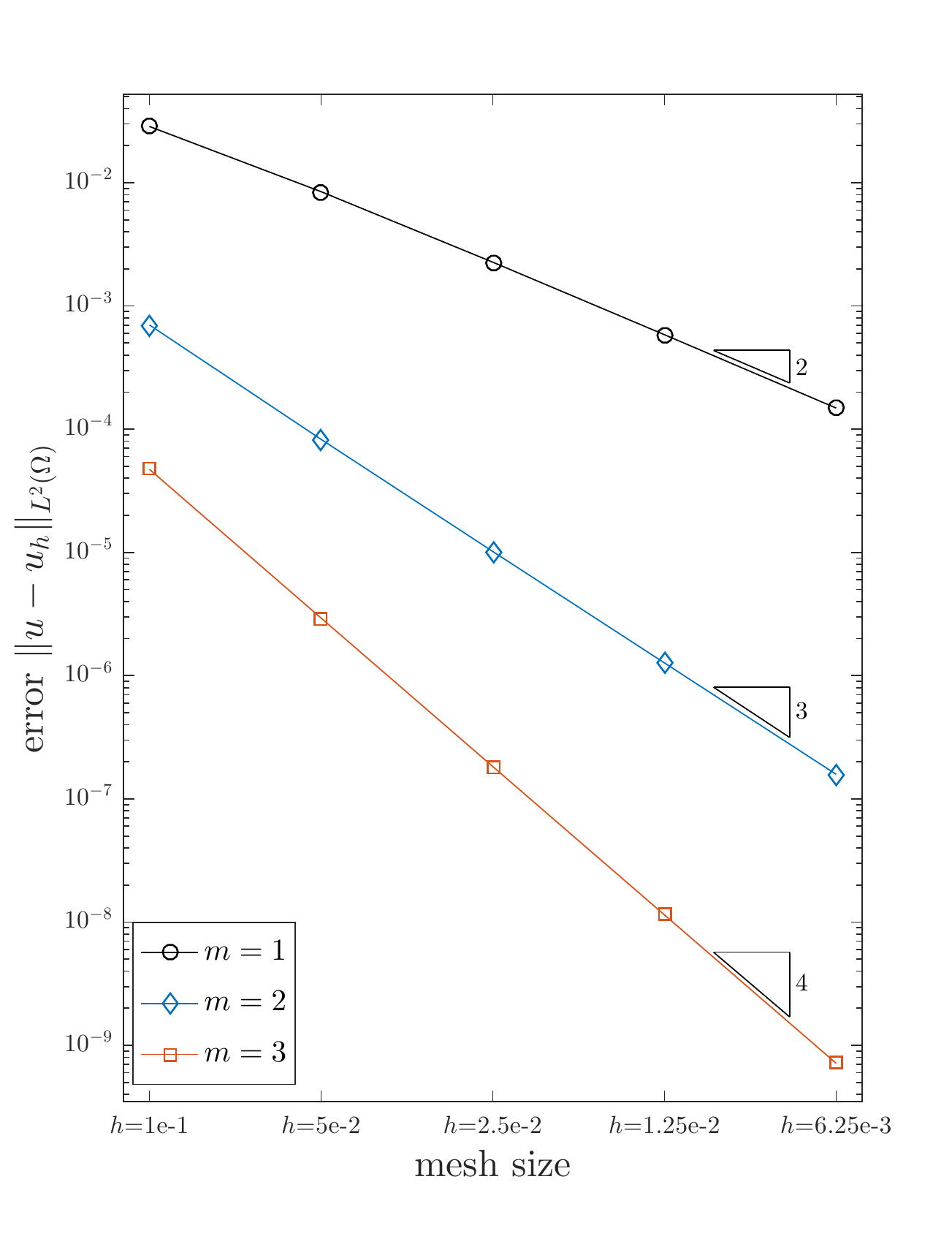}
  \caption{Example 2. The convergence rates of $\unorm{u - u_h}$
  (left) / $\| u - u_h\|_{L^2(\Omega)}$ (right).  }
  \label{fig:ex2uerror}
\end{figure}

\noindent \textbf{Example 3.} This is a 3D example and we solve a
problem in the unit cube $\Omega = (-1, 1)^3$. We partition the domain
$\Omega$ into a series of tetrahedral meshes with mesh size = $1/4$,
$1/8$, $1/16$, $1/32$, see Fig~\ref{fig:partition} for the tetrahedral
mesh with $h = 1/4$. The analytical solution and the coefficient
matrix are setup as
\begin{displaymath}
  u(x, y, z) = \cos(2\pi x) \cos(2\pi y) \cos(2\pi z),
\end{displaymath}
and 
\begin{displaymath}
  A(x, y, z) = \begin{bmatrix}
    10 & \frac{xy}{|xy|} & \frac{xz}{|xz|} \\
    \frac{yx}{|yx|} & 10 & \frac{yz}{|yz|} \\
    \frac{zx}{|zx|} & \frac{zy}{|zy|} & 10 \\
  \end{bmatrix},
\end{displaymath}
and the boundary condition $g$ and the source term $f$ are taken
suitably. We also use the finite element spaces $\bmr{S}_h^m \times
\wt{V}_h^m$ with $1 \leq m \leq 3$ to approximate $\bm{p}$ and $u$,
respectively. The numerical results are shown in
Fig~\ref{fig:ex3perror} and Fig~\ref{fig:ex3uerror}. 
All computed convergence orders agree with the
theoretical results. 

\begin{figure}[htb]
  \centering
  \includegraphics[width=0.4\textwidth]{./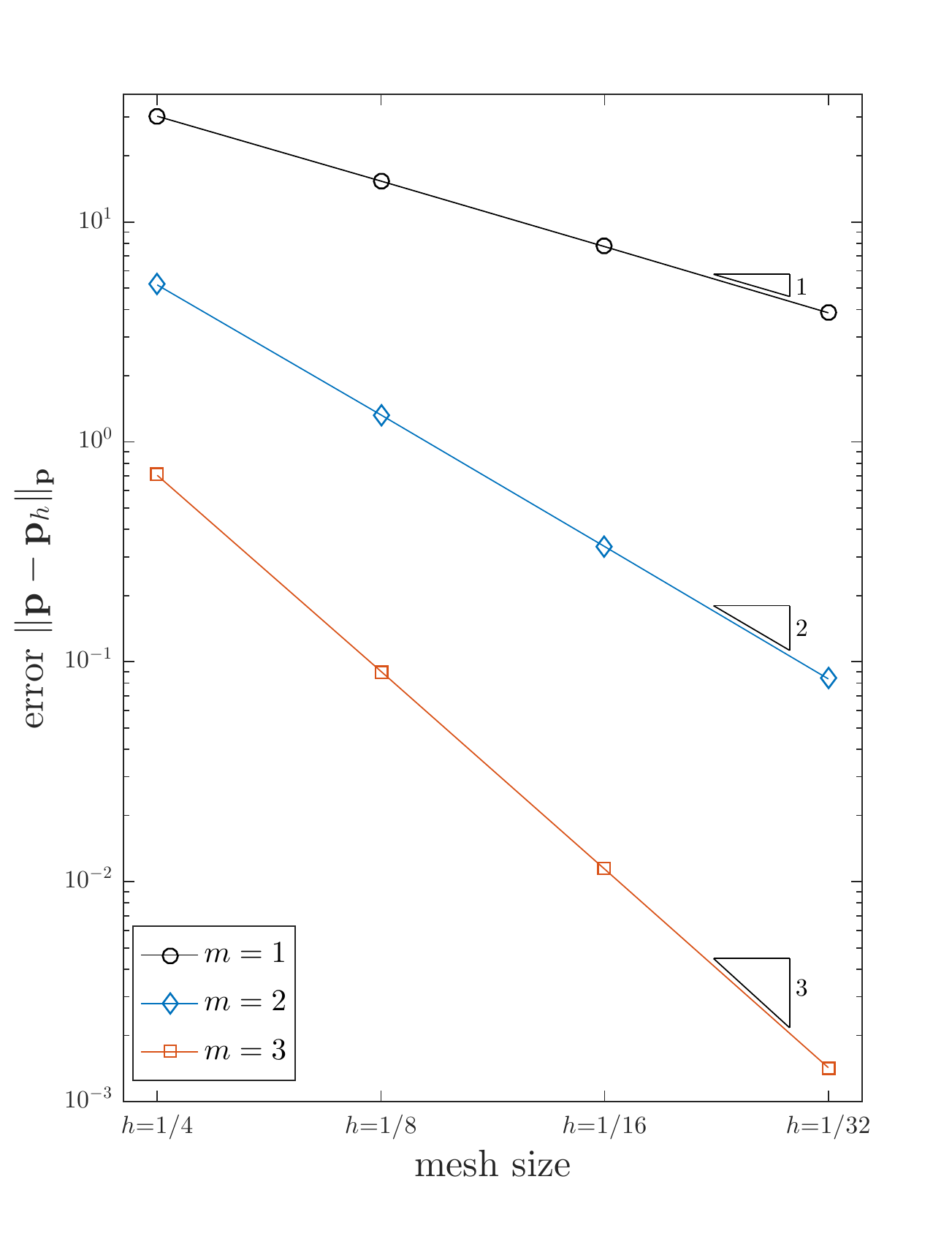}
  \includegraphics[width=0.4\textwidth]{./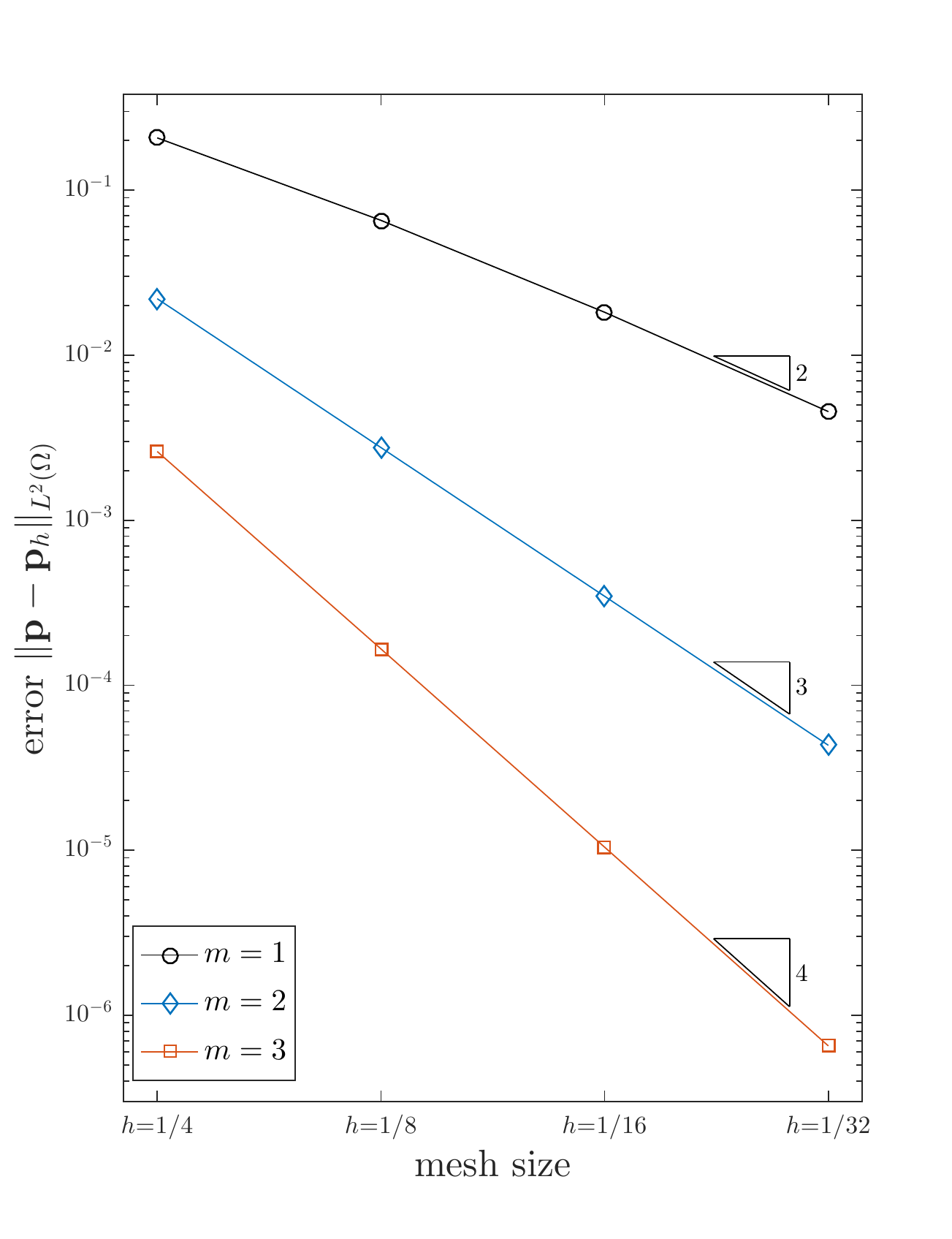}
  \caption{Example 3. The convergence rates of $\pnorm{\bm{p} -
  \bm{p}_h}$ (left) / $\| \bm{p} - \bm{p}_h\|_{L^2(\Omega)}$ (right).
  }
  \label{fig:ex3perror}
\end{figure}

\begin{figure}[htb]
  \centering
  \includegraphics[width=0.4\textwidth]{./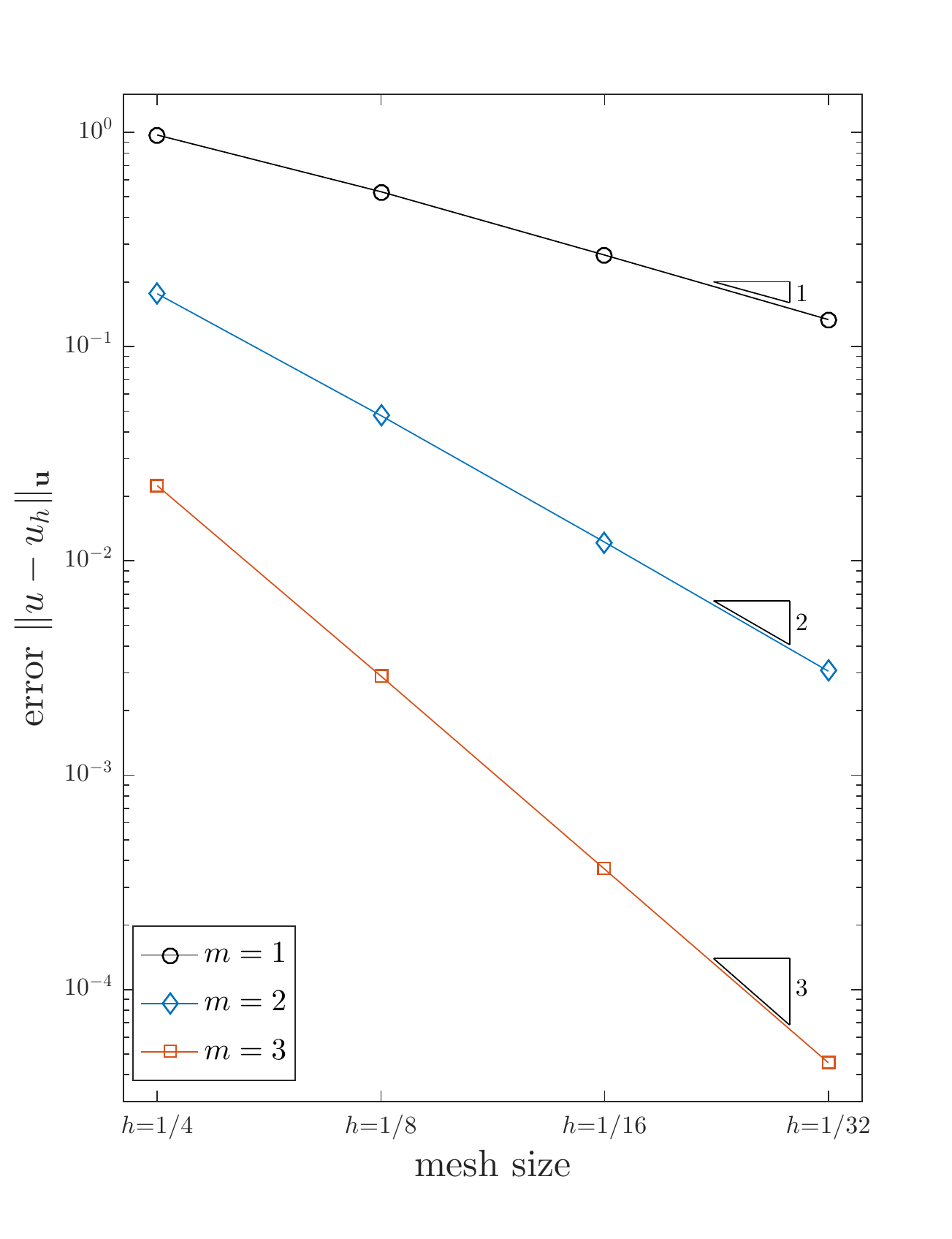}
  \includegraphics[width=0.4\textwidth]{./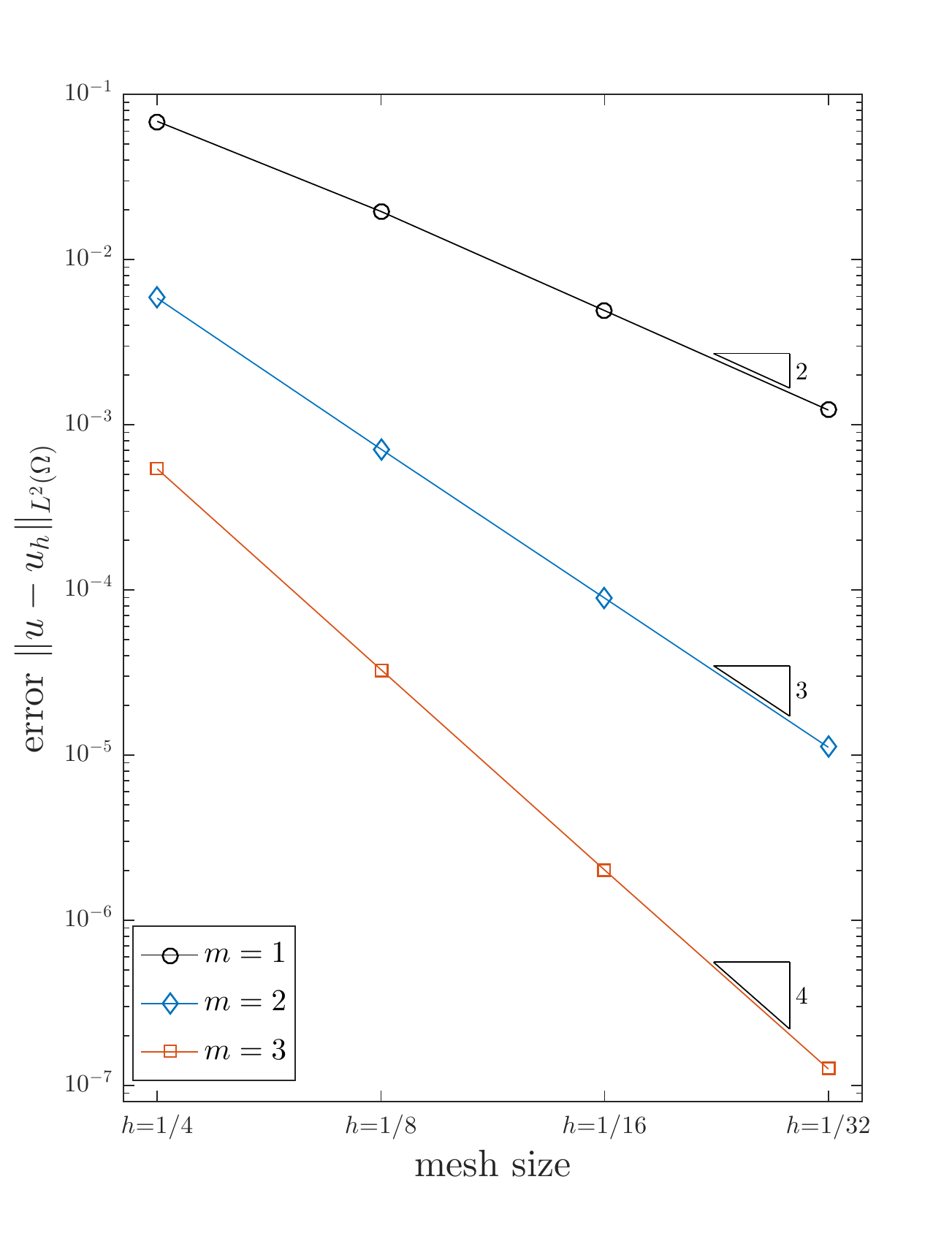}
  \caption{Example 3. The convergence rates of $\unorm{u - u_h}$
  (left) / $\| u - u_h\|_{L^2(\Omega)}$ (right).  }
  \label{fig:ex3uerror}
\end{figure}

\noindent \textbf{Example 4.} In this example, we consider the
problem on the domain $(0, 1)^2$ and the exact solution is chosen to
be
\begin{displaymath}
  u(x, y) = |\bm{x}|^\alpha,
\end{displaymath}
where $\alpha$ is a positive constant. The coefficient matrix
$A(x, y)$ takes the form 
\begin{displaymath}
  A(x, y) = \begin{bmatrix}
    1 + x^2 & xy \\
    xy & 1 + y^2 \\
  \end{bmatrix},
\end{displaymath}
and the data function $f$ and $g$ are selected properly. Notice that
$u$ belongs to the space $H^{\alpha + 1 - \delta}(\Omega)$ for
arbitrary small $\delta$. In the following, we take $\alpha = 1.2$ to
test the adaptive algorithm proposed in the previous section. The
parameter $\theta$ is chosen $\theta = 0.4$ and we consider the linear
accuracy $\bmr{S}_h^1 \times \wt{V}_h^1$ in the approximation to the
variables $\bm{p}$ and $u$. The mesh size of initial triangular
partition is taken as $h = 0.1$, see left figure in
Fig~\ref{fig:partition}. The whole convergence history of the uniform
refinement and adaptive refinement is displayed in
Fig~\ref{fig:ex4error}. For the uniform refinement, we observe the
error $\pnorm{\bm{p} - \bm{p}_h}$ decreases to zero at the speed
$O(h^{0.2})$, which agrees with the convergence analysis. For the
error $\| \bm{p} - \bm{p}_h\|_{L^2(\Omega)}$, the uniform refinement
leads to a reduced convergence speed $O(h^1)$. The reason may be
traced to the singularity of $u$ at the corner. Furthermore, for $u$
the errors $\unorm{u - u_h}$ and $\| u - u_h\|_{L^2(\Omega)}$ approach
to zero at the rate $O(h^1)$, which matches with the theoretical
analysis that the convergence rates in both norms for $u$ depend on
the convergence rate of $\| \bm{p} - \bm{p}_h\|_{L^2(\Omega)}$. For
the adaptive refinement, we note that all error measurements seem to
be optimal. The triangular meshes after 6 adaptive refinement steps
are shown in Fig~\ref{fig:refinement}. Clearly, the refinement is
pronounced in the regions where the solution is of low regularity.

\begin{figure}[htb]
  \centering
  \includegraphics[width=0.4\textwidth]{./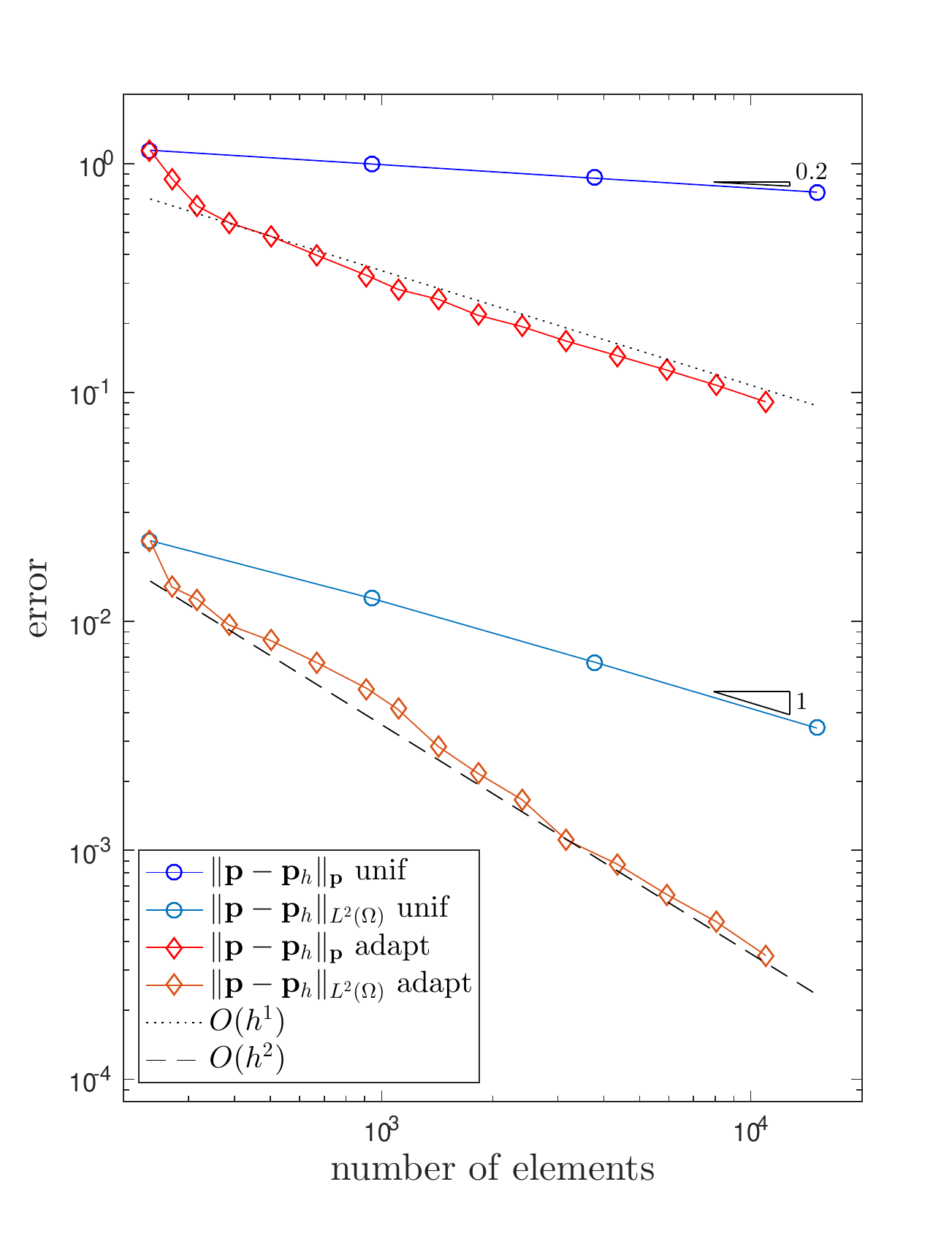}
  \includegraphics[width=0.4\textwidth]{./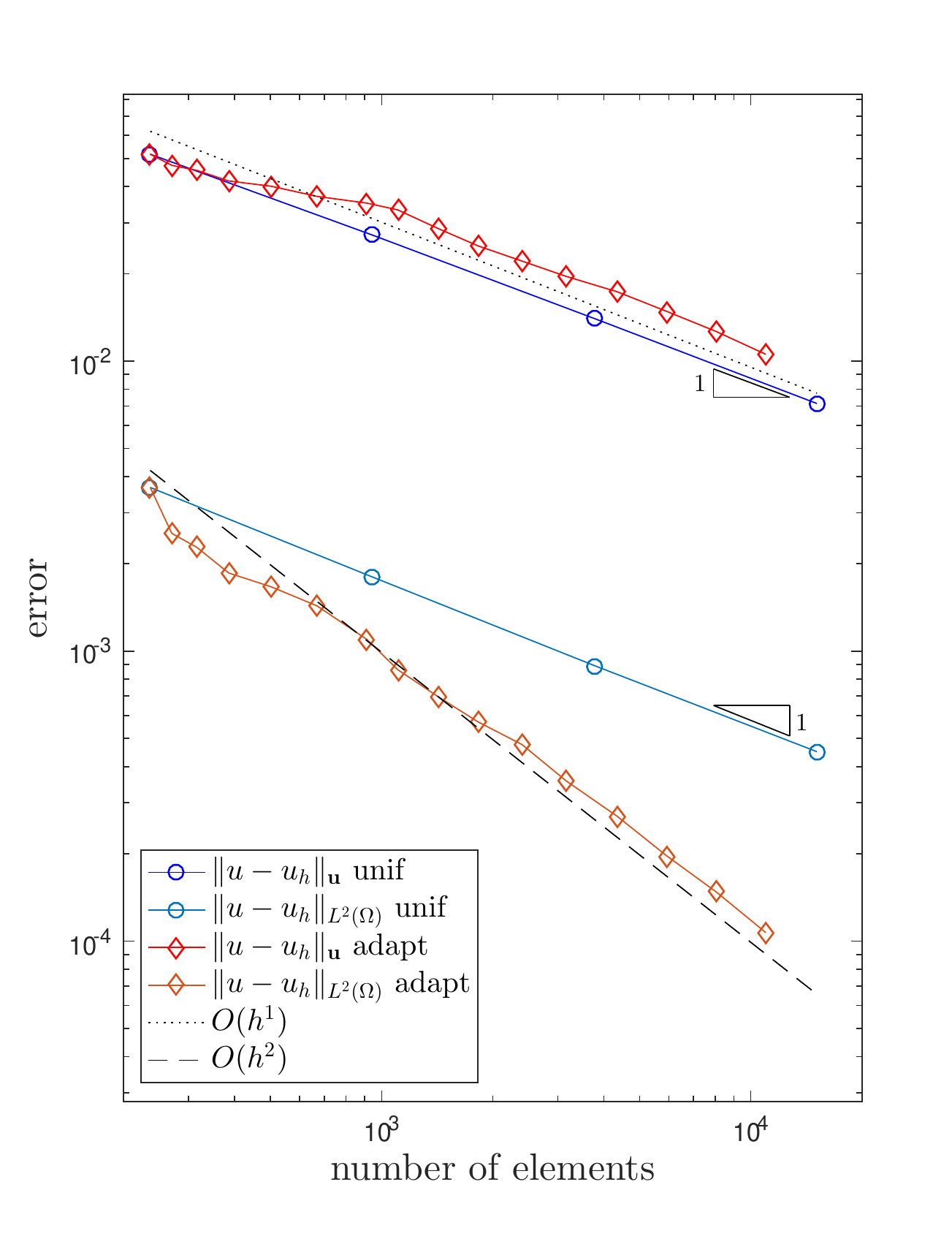}
  \caption{Example 4. The convergence history of $\bm{p}$ (left) / $u$
  (right).}
  \label{fig:ex4error}
\end{figure}

\begin{figure}
  \centering
  \includegraphics[width=0.4\textwidth]{./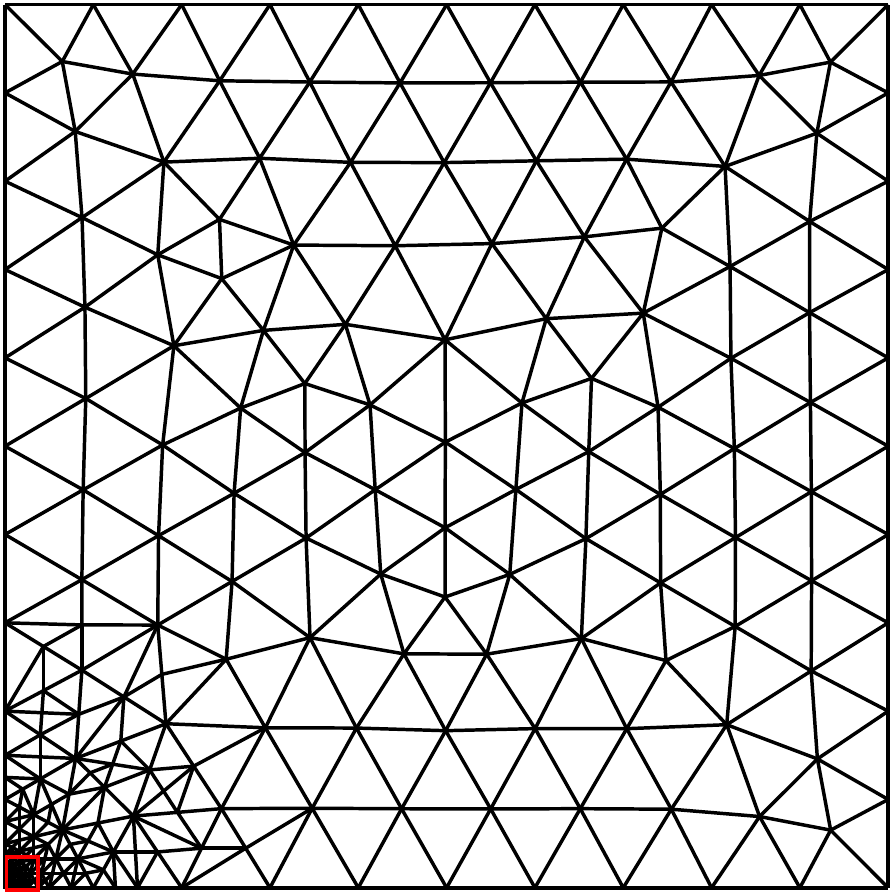}
  \hspace{25pt}
  \includegraphics[width=0.4\textwidth]{./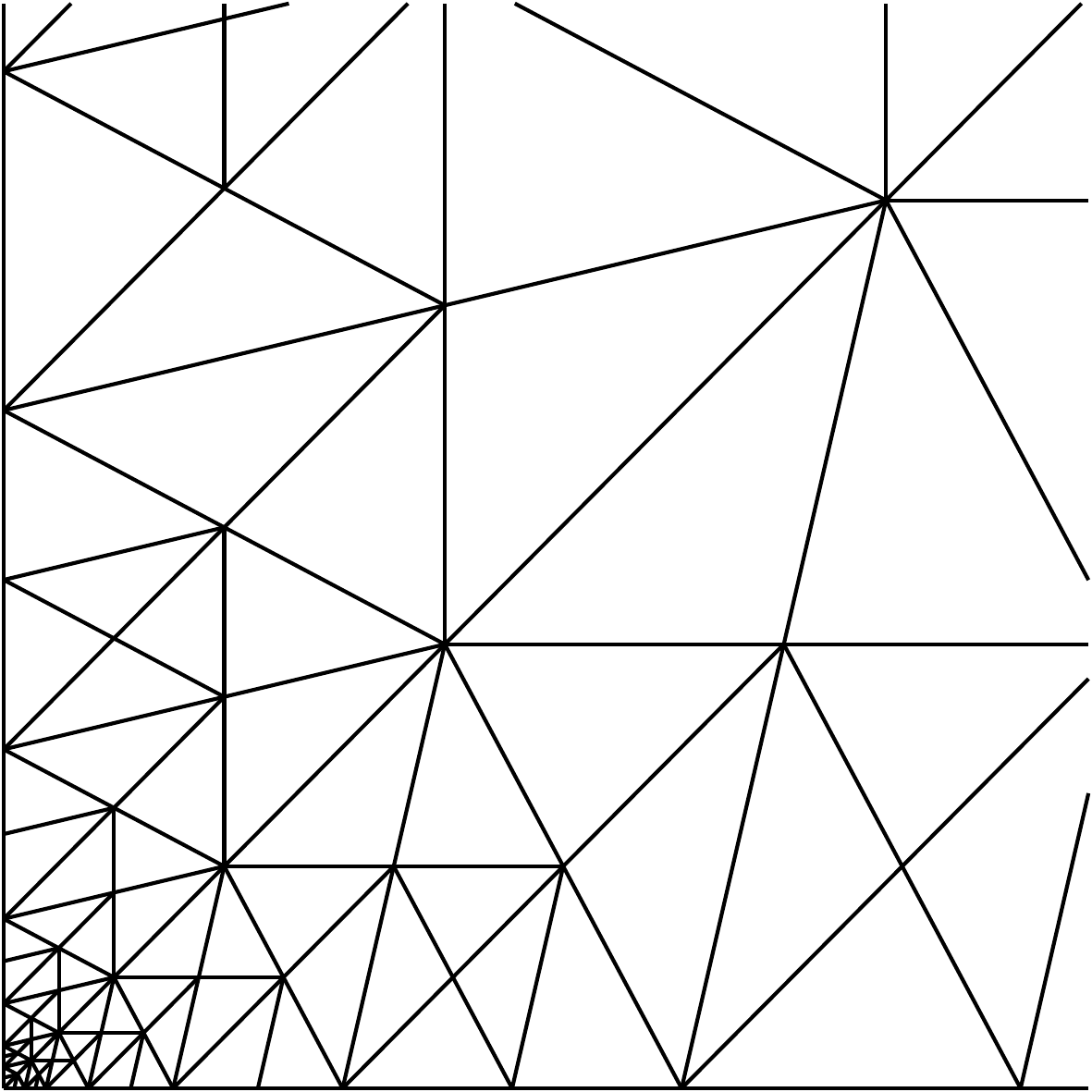}
  \caption{Triangular mesh after 6 adaptive refinement steps (left) /
  elements in the red rectangle (right).}
  \label{fig:refinement}
\end{figure}


\section{Conclusion}
We proposed a sequential least squares finite element method for
elliptic equations in non-divergence form. We employed a novel
piecewise curl-free approximate space to solve the gradient variable
first and then we solve the primitive variable in the $C^0$ finite
element space. We proved the convergence rates for both variables with
respect to the $L^2$ norm and the energy norm. Optimal convergence
orders for all measurements were detected in numerical experiments. We
also tried an adaptive algorithm using $h$-adaptive method to improve
numerical efficiency for a problem of low regularity.

\section*{Acknowledgements}
This research was supported by the Science Challenge Project (No.
TZ2016002) and the National Science Foundation in China (No.
11971041).


\bibliographystyle{amsplain}
\bibliography{../ref}

\end{document}